\newcommand\diffinternal{(1)\xspace}
\newcommand\diffuntyped{(2)\xspace}
\newcommand\diffrecursive{(3)\xspace}
\title{Against Cumulative Type Theory}
\author{Tim Button and Robert Trueman}
\date{}
\begin{document}\midsloppy
\pagestyle{nicelypage}
\maketitletop
\selectbodyfont

\noindent\textcolor{blue}{This is a pre-print; the paper is forthcoming at \emph{Review of Symbolic Logic}.}

\begin{abstract}
    Standard Type Theory, \STT, tells us that $b^n(a^m)$ is well-formed iff $n=m+1$. However, \textcite{LinneboRayo:HOI} have advocated for the use of Cumulative Type Theory, \CTT, which has more relaxed type-restrictions: according to \CTT, $b^\beta(a^\alpha)$ is well-formed iff $\beta>\alpha$. In this paper, we set ourselves against \CTT. We begin our case by arguing against \citeauthor{LinneboRayo:HOI}'s claim that \CTT sheds new philosophical light on set theory. We then argue that, while \CTT's type-restrictions are unjustifiable, the type-restrictions imposed by \STT are justified by a Fregean semantics. What is more, this Fregean semantics provides us with a principled way to resist \citeauthor{LinneboRayo:HOI}'s Semantic Argument for \CTT. We end by examining an alternative approach to cumulative types due to \textcite{FlorioJones:UQSTT}; we argue that their theory is best seen as a misleadingly formulated version of \STT.
\end{abstract}

\noindent 
Standard Type Theory, \STT, tells us that $b^n(a^m)$ is well-formed iff $n=m+1$. However, \textcite{LinneboRayo:HOI} have advocated for the use of Cumulative Type Theory, \CTT, which has more relaxed type-restrictions: according to \CTT, $b^\beta(a^\alpha)$ is well-formed iff $\beta>\alpha$. Other philosophers, including \textcites[237--8]{Williamson:MLM}[527]{Kramer:ETS}{FlorioJones:UQSTT}, have since expressed sympathy for cumulative types.

We set ourselves against cumulative type theory. We begin our case by arguing against \citeauthor{LinneboRayo:HOI}'s claim that \CTT sheds new philosophical light on set theory: in \S\ref{s:LR:interpreting} we highlight some important mathematical differences between \CTT and set theory, and in \S\ref{s:DiscussingSetsFromTypesTheorem} we explore the philosophical consequences of these differences. Then, in \S\ref{CTT-unjustified}, we push our case against \CTT further, by arguing that the type-restrictions it imposes are unjustifiable. This marks an important difference between \CTT and \STT: a \emph{Fregean} semantics justifies \STT's type-restrictions (see \S\ref{STT-justified}), and this Fregean semantics also provides us with a principled way to resist \citeauthor{LinneboRayo:HOI}'s Semantic Argument for \CTT (see \S\ref{optimism and union}). We end, in \S\ref{type theories:CTTfj}, by examining an alternative approach to cumulative types due to \textcite{FlorioJones:UQSTT}; we argue that their theory is best seen as a misleadingly formulated version of \STT.

\section{Formal type theories}
\label{type theories}

We start by outlining the formalisms of \STT and \CTT. For simplicity of exposition, in this paper we focus on \emph{monadic} type theories. (We also only consider un-ramified type theories.)

\subsection{\STT}
\label{type theories:STT}

\STT has a countable infinity of types, $0 \leq n < \omega$. The type of a term is indicated with a numerical superscript: $a^n$ is a type $n$ term. We have constants and variables of every type. Atomic formulas are made by combining a type $n\mathord{+}1$ term with a type $n$ term: $b^n(a^m)$ is well-formed iff $n = m + 1$. Intuitively, $b^{n+1}(a^n)$ applies a type $n\mathord{+}1$ entity to a type $n$ entity, where an entity is of type $n$ iff it is a value of a type $n$ variable; however, exactly what this intuitive gloss amounts to will depend on your preferred interpretation of the types (see \S\S\ref{CTT-unjustified}--\ref{STT-justified}).

Every type of variable can be bound by quantifiers. We here present the rules for $\forall$; the rules for $\exists$ are the obvious duals. For all types $n$, the following inferences are licensed, provided that (i) all expressions are well-formed, and (ii) $b^n$ does not occur in any undischarged assumptions on which $\phi(b^n)$ depends:
\begin{multicols}{2}
\begin{naturaldeduction}
    \AxiomC{$\forall x^n \phi(x^n)$}\inferencerule{\AllE{n}{}}
    \UnaryInfC{$\phantom{\forall x^n}\phi(a^n)$}
\end{naturaldeduction}
\begin{naturaldeduction}
    \AxiomC{$\phantom{\forall x^n}\phi(b^n)$}\inferencerule{\AllI{n}{}}
    \UnaryInfC{$\forall x^n \phi(x^n)$}
\end{naturaldeduction}
\end{multicols}
To ensure that each level of the type hierarchy is well-populated, we have the following scheme, for each type $n$:
\begin{listclean}
    \item[\emph{\STT-Comprehension.}] $\exists z^{n+1}\forall x^n(z^{n+1}(x^n)\liff \phi(x^n))$, whenever $\phi(x^n)$ is well-formed and does not contain $z^{n+1}$.
\end{listclean}
\STT has the usual stock of logical devices: quantifiers, connectives, and the identity sign, $=$. The identity sign can be flanked by a pair of terms of any type, but they must be terms of the same type; so $a^m=b^n$ is well-formed iff $m=n$. Identity is governed by the following scheme, for each type $n$:
    $$x^n=y^n \text{ iff } \forall z^{n+1}(z^{n+1}(x^n)\liff z^{n+1}(y^n))$$
We can treat this as an axiom scheme or an explicit definition. But, either way, $x^n=y^n$ is \emph{typically ambiguous}: there is not a single identity relation that applies across all the types, but a different relation for each type.

\subsection{\CTT}
\label{type theories:CTTlr}
\textcite{LinneboRayo:HOI} ask us to consider an alternative, cumulative, type theory, \CTT. This type theory was formally developed by \textcite{DegenJohannsen:CHOL}. (We discuss a different approach to cumulation, due to \textcite{FlorioJones:UQSTT}, in \S\ref{type theories:CTTfj}.) The basic thought behind \CTT is that the entities cumulate as you ascend through the types. Let us see how this is implemented.

First, \CTT relaxes \STT's syntax. In \STT, $b^n(a^m)$ is well-formed iff $n = m +1$. But, if the types cumulate, then everything at level $0$ reappears at level $1$; so, since $c^2(a^1)$ is meaningful, $c^2(a^0)$ should be too. More generally, \CTT allows that $b^\beta(a^\alpha)$ is well-formed iff $\beta > \alpha$. And note that we use `$\alpha$' and `$\beta$' rather than `$n$' and `$m$' here: if the types cumulate, we will want to be able to consider transfinite types, and so we must allow ourselves a transfinite stock of type-indices. (One obvious way to do this is to stipulate that the type-indices are von Neumann's ordinals, but the only important constraint is that the type-indices be well-ordered.)\footnote{Cf.\ \textcites[294]{LinneboRayo:HOI}[178--9]{LinneboRayo:RFS} on `definite' collections of languages and alternative `labels'. For readability, we use standard ordinal notation in this paper, but this is easily eliminable; e.g.\ `$\alpha+1$' can be parsed as `the next index after $\alpha$', and `$\omega$' as `the first limit index'.}

Second, \CTT has rather permissive inference rules for quantifiers. (Again, we only outline the rules for $\forall$.) For all types $\beta \geq \alpha$, the following inferences are licensed, provided that (i) all expressions are well-formed, and (ii) $b^\beta$ does not occur in any undischarged assumption on which $\phi(b^\beta)$ depends:\footnote{These are the obvious natural-deduction versions of \citepossess[149]{DegenJohannsen:CHOL} sequent-calculus rules. \textcite[288]{LinneboRayo:HOI} are not specific on the rules they adopt, but  \parencite*[282n20]{LinneboRayo:HOI} appeal to a result from \textcite[150]{DegenJohannsen:CHOL} which uses these rules.}
\begin{multicols}{2}
\begin{naturaldeduction}
    \AxiomC{$\forall x^\beta \phi(x^\beta)$}\inferencerule{\AllE{\beta}{\alpha}}
    \UnaryInfC{$\phantom{\forall x^\beta}\phi(a^\alpha)$}
\end{naturaldeduction}
\begin{naturaldeduction}
    \AxiomC{$\phantom{\forall x^\beta}\phi(b^\beta)$}\inferencerule{\AllI{\beta}{\alpha}}
    \UnaryInfC{$\forall x^\alpha \phi(x^\alpha)$}
\end{naturaldeduction}
\end{multicols}\noindent
These rules are intuitively sound, given the idea of cumulation: every type $\alpha$ entity is a type $\beta \geq \alpha$ entity too; so if $\phi$ holds of every type $\beta$ entity, then $\phi$ holds of each type $\alpha$ entity.

Third, to ensure that each successor-level of the type hierarchy is well-populated, \CTT has a {Comprehension} scheme, for each type $\alpha$:\footnote{\textcites[149]{DegenJohannsen:CHOL}[288]{LinneboRayo:HOI} offer a variant formulation, using $\lambda$-abstraction.}
\begin{listclean}
    \item[\emph{\CTT-Comprehension.}] $\exists z^{\alpha+1}\forall x^\alpha(z^{\alpha+1}(x^\alpha)\liff \phi(x^\alpha))$, whenever $\phi(x^\alpha)$ is well-formed and does not contain $z^{\alpha+1}$.
\end{listclean}
Fourth, \CTT has an infinitary inference rule for each limit type $\lambda$:\footnote{\textcites[153]{DegenJohannsen:CHOL}[288]{LinneboRayo:HOI}.}
\begin{center}
    \begin{naturaldeduction}
        \AxiomC{$\forall x^\alpha \phi(x^\alpha)$, for all $\alpha < \lambda$}\inferencerule{\emph{Limit$^\lambda$}}
        \UnaryInfC{$\forall x^\lambda\phi(x^\lambda)$\phantom{, for all $\alpha \leq \lambda$}}
    \end{naturaldeduction}
\end{center}
Intuitively, this guarantees that nothing essentially `new' happens at limit types, so that any type $\lambda$ entity is an entity of some type $\alpha < \lambda$. 

So far, we have identified entities across types quite freely. However, \textcite[281--3]{LinneboRayo:HOI} retain the rule that a strict identity claim, $x^\alpha=y^\beta$, is well-formed iff $\alpha=\beta$. To deal with cross-type identity, they explicitly define a new sign, $\eqCTT$, for any types $\alpha$ and $\beta$ and where $\gamma = \max(\alpha, \beta)+1$:\footnote{\textcite[149]{DegenJohannsen:CHOL} draw no  distinction between $=$ and $\eqCTT$.}
\begin{align*}
	a^\alpha \eqCTT b^\beta & \mliffdf \forall x^{\gamma}(x^{\gamma}(a^\alpha) \liff x^\gamma(b^\beta))
\end{align*}
This definition is typically ambiguous: it defines different relations for different $\alpha$ and $\beta$. But all of these relations behave like identity: if $\phi(a^\alpha)$ and $\phi(b^\beta)$ are both well-formed, then $\phi(a^\alpha)$ and $a^\alpha\equiv b^\beta$ together entail $\phi(b^\beta)$.\footnote{For a proof, see Lemma \ref{fact:CTT} of \S\ref{s:app:elementary}.} Now we can prove the following theorem scheme, for all $\alpha\leq\beta$:\footnote{\label{fn:TypeRaisingProof}\textcite[288]{LinneboRayo:HOI} take Type-Raising as an axiom scheme; we prove it in Lemma \ref{fact:TR} of \S\ref{s:app:elementary}.}
\begin{listclean}
    \item[\emph{Type-Raising Scheme.}] $\forall x^\alpha \exists y^\beta\phantom{(}x^\alpha\equiv y^\beta$
\end{listclean}
So, if $\alpha \leq \beta$, then every type $\alpha$ entity is a type $\beta$ entity, in the sense of `is' expressed by $\eqCTT$. 

We also provide another (typically ambiguous) explicit definition, where $\gamma = \max(\alpha, \beta)+1$:\footnote{\textcites[151]{DegenJohannsen:CHOL}[282]{LinneboRayo:HOI}. \emph{Notation}: $(\exists x^\gamma \eqCTT b^\beta)\phi$ abbreviates $\exists x^\gamma(x^\gamma \eqCTT b^\beta \land \phi)$; similarly, $(\forall x^\gamma \eqCTT b^\beta)\phi$ abbreviates $\forall x^\gamma(x^\gamma \eqCTT b^\beta \lonlyif \phi)$; and similarly for other two-place infix predicates.}
\begin{align*}
	a^\alpha \inCTT b^\beta & \mliffdf (\exists x^{\gamma} \eqCTT b^\beta)x^{\gamma}(a^\alpha)
\end{align*}
This membership-like notion applies $b^\beta$ to $a^\alpha$, but is well-formed for every $\alpha$ and $\beta$. So $a^\alpha \inCTT b^\beta$ allows us to simulate $b^\beta(a^\alpha)$, even when $\alpha\geq\beta$. 

If we provide no further axioms, though, then $\inCTT$ can be ill-founded. To rule this out, we lay down two final schemes, for all $\alpha, \beta$:\footnote{In \S\ref{s:app:elementary}, we prove that Type-Founded and Type-Base are independent from the axioms given so far. \textcite[289]{LinneboRayo:HOI} provide a version of \emph{Type-Base}, but no version of \emph{Type-Founded} (though they clearly want some such principle; see \cite[283n.22]{LinneboRayo:HOI}). \textcite{DegenJohannsen:CHOL} tackle this slightly differently; see the start of \S\ref{s:PCTT:appendix}, below. With these principles, we can establish that: if $\alpha < \beta$ then $a^\alpha \inCTT b^\beta$ iff $b^\beta(a^\alpha)$; if $\alpha \geq \beta$ and $\alpha$ is minimal for $a^\alpha$ and $\beta$ is minimal for $b^\beta$, then $a^\alpha \notinCTT b^\beta$. (Here, we say that $\gamma$ is minimal for $c^\gamma$ iff $\forall x^\delta\ c^\gamma \neqCTT x^\delta$ for all $\delta < \gamma$.)}
\begin{listclean}
    \item[\emph{Type-Founded}.] $\forall a^\alpha \forall b^{\beta+1}(a^\alpha \inCTT b^{\beta+1} \lonlyif \exists x^\beta\ a^\alpha \eqCTT x^\beta)$    
    \item[\emph{Type-Base}.] $\forall x^0 \forall y^\alpha\ y^\alpha \notinCTT x^0$    
\end{listclean}
This completes the list of axioms and inference rules for \CTT.

It is worth making a brief observation about syntax. In moving from \STT to \CTT, we are asked to relax \STT's syntax: $b^\beta(a^\alpha)$ is well-formed iff $\beta > \alpha$. There is an obvious way to relax this further, whilst retaining a typed theory: allow that $b^\beta(a^\alpha)$ is well-formed for \emph{any} $\alpha$ and $\beta$. However, this further relaxation would have no real effect. As just noted, \CTT can simulate $b^\beta(a^\alpha)$ using the formula $a^\alpha \inCTT b^\beta$, where the latter is defined using the more stringent type-restrictions. Consequently, we can be largely indifferent on whether to use the \emph{stringent} type-restrictions, so that $b^\beta(a^\alpha)$ is well-formed iff $\beta > \alpha$, or the more \emph{liberal} type-restrictions, so that $b^\beta(a^\alpha)$ is well-formed for any $\alpha$ and $\beta$. In what follows, we will tend to adopt the stringent type-restrictions, but we will revisit this in \S\ref{CTT-unjustified}.

For each type-index $\tau$, the theory \CTT[\tau] has a countable infinity of distinct variables of every type $< \tau$, and no terms of any type $\geq \tau$. We refer to the cumulative type theories in general as `\CTT', using `\CTT[\tau]' with the superscript when it is important to pay attention to the bound.

\section{The Sets-from-Types Theorem}
\label{s:LR:interpreting}

\textcites{DegenJohannsen:CHOL}{LinneboRayo:HOI} note that \CTT interprets an iterative set theory. In this section, we present a strengthened version of their formal results. We discuss its philosophical significance in \S\ref{s:DiscussingSetsFromTypesTheorem}. For ease of exposition, we will consider set theories without urelements (and similar type theories); we could accommodate urelements if we liked, but it would complicate our discussion without adding any real insight.

\subsection{The interpretation}\label{s:LR:theinterpretation}
We will focus on a `pure' version of \CTT[\tau], which we call \PCTT[\tau]. This augments \CTT[\tau] with principles guaranteeing that there is exactly one type $0$ entity, and that coextensive entities at higher-types are identical. (For details, see \S\ref{s:formulatingPCTT}.) The set theory that \PCTT[\tau] can interpret is \Zr, i.e.\ \emph{Z}ermelo set theory together with the principle that the sets are arranged into well-ordered \emph{r}anks. This theory omits Replacement, and so is strictly weaker than \ZF. (For more details, see \S\ref{s:PCTT:appendix}; note that \ZF = \Zr + Replacement.) 

To interpret \Zr with \PCTT[\tau], we first define a translation. For each \Zr-formula $\phi$, let \typetrans{$\phi$}{\kappa} be the formula which results by replacing each `$\in$' with `$\inCTT$', each `$=$' with `$\eqCTT$', and superscripting each variable with $\kappa$. For example, the Axiom of \typetrans{Powersets}{\kappa} is:
    $$\forall a^{\kappa}\exists b^{\kappa}\forall x^{\kappa}(x^{\kappa} \inCTT b^{\kappa} \liff (\forall v^\kappa \inCTT x^{\kappa})v^\kappa \inCTT a^{\kappa})$$
Now we can prove the following result (see \S\ref{s:PCTT:appendix}, Theorem \ref{thm:PCTT:Zr}):\footnote{\label{fn:LR:givethemranks}\label{fn:kappa+2explanation}This extends \citepossess[\S4.1]{DegenJohannsen:CHOL} results concerning \Zermelo. \textcite[289, n.28]{LinneboRayo:HOI} cover only \Zermelo without Foundation. The bound $\kappa + 2 < \tau$ is needed as $a^\kappa \inCTT b^\kappa$ abbreviates $\exists x^{\kappa+1}(\forall z^{\kappa+2}(z^{\kappa+2}(x^{\kappa+1})\liff z^{\kappa+2}(b^{\kappa})) \land x^{\kappa+1}(a^\kappa))$.}
\begin{listclean}
    \item[\emph{The Sets-from-Types Theorem.}]
    $\PCTT[\tau] \proves \typetrans{Zr}{\kappa}$, for any limit $\kappa > \omega$ with $\kappa + 2 < \tau$.
\end{listclean}\noindent
Otherwise put: \PCTT[\tau] proves the translations of all theorems of \Zr.

\subsection{Differences between \Zr and \Zrkappa}\label{s:LR:interpret:differences}
The proof of the Sets-from-Types Theorem involves establishing a tight association between two notions: an entity's \emph{type}, as in \PCTT[\tau] and \typetrans{\Zr}{\kappa}, and a set's \emph{rank}, as in \Zr. This sort of connection leads \textcite[289]{LinneboRayo:HOI} to claim that `there is no deep \emph{mathematical} difference between the ideological hierarchy of type theory and the ontological hierarchy of set theory.' 

Whether to describe them as `deep' may be a matter of taste, but it is worth noting three mathematical differences between \Zr, on the one hand, and \PCTT[\tau] and \typetrans{\Zr}{\kappa}, on the other.\footnote{\textcite[284, 289]{LinneboRayo:HOI} mention differences \diffinternal and 
\diffuntyped themselves, but they do not mention \diffrecursive.} We summarize the differences in the following table:
\begin{center}

\

	\noindent\begin{tabular}{@{}p{0.05\textwidth} p{0.35\textwidth} p{0.35\textwidth}@{}}
		\toprule
		& \Zr & \PCTT[\tau] and \typetrans{\Zr}{\kappa} 
		\tabularnewline\addlinespace
		\midrule \addlinespace
        \diffinternal & ranks defined internally & types supplied externally \\
        \diffuntyped & untyped variables & typed variables \\
        \diffrecursive & recursively axiomatized & arithmetically complete 
        \tabularnewline\addlinespace
	    \bottomrule
	\end{tabular}
	
\

\end{center}
We will now explain these three differences.

\emph{Concerning \diffinternal.} The notion of 
\emph{rank} is explicitly defined within \Zr itself, much as it is within \ZF.\footnote{Indeed, it is definable within \LT; see \S\ref{s:PCTT:appendix}.} By contrast, the notion of \emph{type} is metatheoretic for both \PCTT[\tau] and \typetrans{\Zr}{\kappa}. Every variable carries a type-index, and these type-indices are supplied \emph{externally}. Indeed, when we take the very first step of describing the syntax of \CTT[\tau], we assume as given all the {type-indices} $< \tau$.

\emph{Concerning \diffuntyped.} \Zr is essentially \emph{untyped}. It has exactly one kind of variable, which ranges over all sets of all ranks. By contrast, every variable in \PCTT[\tau] and \typetrans{\Zr}{\kappa} carries a type-index, and \PCTT[\tau]'s quantifier rules indicate that type $\alpha$ variables range only over entities of type $\leq \alpha$. These theories have no \emph{untyped} variables; that is, they have no variables which range over all entities of \emph{all} types. (Note that, despite our use of the phrase `ranging over', this difference shows up at the level of the formal theories, prior to interpretation. Indeed, none of the differences depend upon any semantic considerations.)

\emph{Concerning \diffrecursive.} Clearly, \Zr is recursively axiomatized (see \S\ref{s:formulatingZr}). However, neither \PCTT[\tau] nor \typetrans{\Zr}{\kappa} is recursively axiomatizable, thanks to the intrinsically infinitary Limit-rules. Indeed, Limit$^\omega$ makes these theories arithmetically complete, since it simulates Hilbert's $\omega$-rule.\footnote{Assuming $\kappa > \omega$ and $\kappa + 2 < \tau$. \emph{Sketch.} Using the Sets-from-Types Theorem, use \PCTT[\tau] to develop \typetrans{Zr}{\omega+\omega}. In \typetrans{Zr}{\omega+\omega}, define $\mathbb{N}$ as the set of finite von Neumann ordinals, and define $+$ and $\times$ as usual. Suppose we can show $\phi(n)$ for each $n$; then since the type of each $n$ is $n$, for each $n < \omega$ we can show $\forall x^n(x^n \inCTT \mathbb{N} \lonlyif \phi(x^n))$; now use Limit$^\omega$.}

\subsection{Mathematical foundations}\label{s:LR:interpret:foundations}
We will now explain why these three differences are mathematically significant. In brief: the differences show that \Zr is expressively richer but deductively weaker than \Zrkappa; this makes \Zr much more suitable as a framework for considering  mathematical foundations. 

Differences \diffinternal--\diffuntyped show that \Zr is \emph{expressively richer} than \Zrkappa. To see this, consider how we might formulate questions about the height of a hierarchy. In the case of \Zr, we might ask a specific question like: \emph{Should we countenance a strongly inaccessible rank?} That question is formulated within the \emph{object language} of \Zr, and this is possible because \Zr's untyped variables range over all the sets, whatever their rank. So, whilst \Zr does not \emph{settle} whether there are any sets of strongly inaccessible rank, it does allow us to \emph{formulate} the claim that there are, and tells us that any such sets obey Extensionality and Separation (for example). In the case of \PCTT, the analogous question about the height of a type-hierarchy would be: \emph{Should we countenance a strongly inaccessible type-index?} But this question is, of course, formulated within a \emph{metalanguage}. After all, each \PCTT[\tau] has variables of all and only the types $< \tau$, and \Zrkappa has variables of all and only the types $\leq \kappa+2 < \tau$,\footnote{See footnote \ref{fn:kappa+2explanation}.} so neither theory allows us to formulate questions about entities of type $\tau$; they literally lack the vocabulary for doing so. 

Difference \diffrecursive, however, shows that \Zr is \emph{deductively weaker} than \Zrkappa. This is obvious---one is arithmetically complete, the other is not---but let us draw out a couple of consequences. The Sets-from-Types Theorem tells us that \PCTT[\tau] interprets \Zr. However, this interpretation is not \emph{faithful}, i.e.\ some non-theorems of \Zr become theorems of \Zrkappa under interpretation; nor is the interpretation \emph{mutual}, i.e.\ \Zr cannot interpret \Zrkappa.\footnote{\emph{Illustration.} Let $\text{Con}_{\Zr}$ be a suitable consistency sentence for \Zr. This is independent from \Zr, by the second incompleteness theorem; but \Zrkappa proves $\text{Con}_{\Zr}^{(\kappa)}$, since it is arithmetically complete. The same example shows that \Zr does not interpret \Zrkappa.}

This combination of expressive richness with deductive weakness makes \Zr much more suitable as a framework for mathematical foundations than \Zrkappa or  \PCTT[\tau].\footnote{\emph{Pace} \citepossess{DegenJohannsen:CHOL} sentiment \PCTT might serve `as a foundation for set theory'. Note that differences \diffinternal--\diffuntyped also underpin the philosophical discussion of \S\ref{s:DiscussingSetsFromTypesTheorem}.} Concerning expressive strength: if our hierarchy is to serve as any kind of mathematical foundation, then questions about the height of the hierarchy will be of pressing importance; but only \Zr provides a suitable framework for raising such questions. Concerning deductive weakness: any adequate foundational theory must be recursively axiomatizable since, as  \textcite[45]{Godel:PSFM} put it, only recursively axiomatizable theories can leave no doubt regarding whether a putative proof \emph{is} a proof, so that `the highest possible degree of exactness is obtained'; but only \Zr is recursively axiomatized. 

\subsection{Gödel on `superfluous restrictions' in type theory}\label{s:GodelSuperfluous}
We just quoted Gödel on mathematical foundations. Having made the quoted remarks, Gödel went on to make a more famous claim:
\begin{quote}
	the system of axioms for the theory of aggregates, as presented by Zermelo, Fraenkel, and von Neumann\ldots is nothing else but a natural generalization of the theory of types, or rather, what becomes of the theory of types if certain superfluous restrictions are removed.\footnote{\textcite[45--6]{Godel:PSFM}.}
\end{quote}
He continued by outlining the `superfluous restrictions' thus:\footnote{\textcite[46--7]{Godel:PSFM}; for discussion, see \textcites[37]{Feferman:Note1933}[88]{Tait:GUPFM}.}
\begin{listr-0}
	\item\label{godel:relax:false} `$a \in b$' is meaningful iff the type of `$b$' is exactly one greater than that of `$a$';
	\item\label{godel:relax:class} each class (of any type) can contain classes of exactly one type;
	\item\label{godel:relax:limit} only finite types are allowed.
\end{listr-0}
Whilst explicitly disavowing exegetical aims, \textcite[273--4, 278]{LinneboRayo:HOI} motivate \CTT by suggesting that \CTT arises from \STT simply by lifting these `superfluous restrictions'. 

Certainly \CTT lifts restrictions \eqref{godel:relax:false}--\eqref{godel:relax:limit}. But \Zr also lifts these restrictions, and in a different way. Moreover, it is this latter way which we find in Gödel's \cite*{Godel:PSFM} lecture. On each of points \diffinternal--\diffrecursive from \S\ref{s:LR:interpret:differences}, Gödel sides against the use of anything like \Zrkappa.

\emph{Concerning \diffinternal.} \textcite[47]{Godel:PSFM} is clear that the theory which arises by removing \STT's `superfluous restrictions' will supply its own `types'.\footnote{\textcite[92]{Tait:GUPFM} emphasises this point, and \textcite[289 n.28]{LinneboRayo:HOI} concede it.}

\emph{Concerning \diffuntyped.} \textcite[49]{Godel:PSFM} complains that, in 
\STT, we have to formulate `the logical axioms for each type separately', and he states that the theory which removes \STT's `superfluous restrictions' will avoid this complaint. Such a theory will therefore employ an \emph{untyped} variable, which can range over all entities.

\emph{Concerning \diffrecursive}. As already noted, \textcite[45, 48]{Godel:PSFM} insists that an adequate formalization of the foundations of mathematics must be recursively axiomatizable, and explicitly remarks that such theories are necessarily  arithmetically incomplete.

Gödel, then, seems never to have envisaged theories like \Zrkappa or \PCTT[\tau].\footnote{Can we consider (or might Gödel have considered) the move from \STT to \Zr as involving two steps: \emph{first}, \citeauthor{LinneboRayo:HOI}'s step from \STT to some \CTT[\tau] and \Zrkappa; \emph{second} the addition of an untyped variable to \Zrkappa, yielding \Zr? (Thanks to an anonymous referee for posing this question.) This may be a useful heuristic, but it is slightly technically infelicituous, since the result of adding an untyped variable to \Zrkappa will be  arithmetically complete (cf.\ footnote \ref{fn:pedantry}).} Rather, 
Gödel's suggestion was that removing \STT's `superfluous restrictions' led to \ZFU, by the simple stipulation that the `type' of $x$ is $\alpha$ iff $x \in V_{\alpha+1} \setminus V_\alpha$, with these segments of the set hierarchy defined directly \emph{within} \ZFU in the (now) familiar fashion.\footnote{Cf.\ \textcite[208]{Scott:AST}: `the best way to regard Zermelo's theory is as a simplification and extension of Russell's [\STT]\ldots. The simplification was to make the types \emph{cumulative}.' Note that we are talking about \ZFU rather than ZrU. This is inevitable, since ZrU was not formulated until long after Gödel's lecture. However, Gödel supplied an additional argument in favour of Replacement; see footnote \ref{footnote:godelbootstrap}, below.} That is, Gödel simply identified a set's `type' with (what we now call) its \emph{rank}, and advocated for the use of recursively axiomatized theories whose untyped variables range over all the sets (of all ranks).

\section{The (in)significance of the Sets-from-Types Theorem}\label{s:DiscussingSetsFromTypesTheorem}
We have noted the important {mathematical} differences between \Zr and \Zrkappa. We will now show how these differences undermine the {philosophical} significance of the Sets-from-Types Theorem. In broad brush strokes: \citeauthor{LinneboRayo:HOI} think that the Sets-from-Types Theorem sheds important new light on set theory; we disagree, since \Zrkappa and \Zr and importantly distinct. 

\subsection{Elsa's worries}
\label{hierarchies}
To reconstruct \citepossess[289--94]{LinneboRayo:HOI}[178]{LinneboRayo:RFS} intended use of the Sets-from-Types Theorem, we will introduce a character, Elsa. Elsa wants to use \Zr to talk about the hierarchy of sets, but she has some ontological worries. Following post-Quinean orthodoxy, Elsa draws a sharp distinction between a theory's ontology and its ideology. In general, Elsa thinks that if a theory is coherent, then that is enough to guarantee the good standing of its ideology: roughly, Elsa thinks that a theory's ideology merely provides you with a way of talking about objects, and there is no standard beyond coherence by which to judge ways of talking. Now, Elsa is certain that \Zr is coherent, and so she has no reservations about its ideology. But, \Zr also postulates a rich ontology of sets, and Elsa insists that the mere coherence of a theory is not enough to guarantee the existence of its ontological commitments. So, Elsa worries: \emph{What guarantees that there are enough sets?}

\citeauthor{LinneboRayo:HOI} have a sequence of recommendations for Elsa. First, they will introduce Elsa to the type hierarchy, in the form of \PCTT[\tau], whose coherence can be assumed (at least, in this context). The question arises of how Elsa should think about ontology/ideology in the type-theoretic context. Quantification over type $0$ entities is just first-order quantification; so Elsa should think that theorizing at type $0$ introduces \emph{ontological} commitments. However, Elsa can perhaps be encouraged to think that theorizing at higher types simply gives us sophisticated ways to talk about the objects at type $0$, and so only introduces \emph{ideological} commitments. If Elsa agrees to think in this way, then she will map her dichotomy between ontology and ideology onto the dichotomy between \emph{type $0$} and \emph{type $> 0$}.\footnote{\textcite[270]{LinneboRayo:HOI} claim that this is how the higher-types are widely regarded by philosophers. For the record, we think that anyone who uses type theory (cumulative or non-cumulative) should reject the idea that there is a useful ontology/ideology dichotomy to be drawn along this faultline. When \textcite{Quine:OI} drew his distinction between ontology and ideology, he drew it for \emph{first-order logic}. In that setting, the distinction is clear enough: we are ontologically committed to the things we quantify over; ideological commitments are expressed by symbols in positions that cannot be quantified into. But in a type-theoretic setting, we can quantify into predicate-position. So distinctions of logical order no longer align with the quantifiable/unquantifiable distinction. See also  \textcites[260--1]{Williamson:MLM}[ch.7]{Trueman:PP}.} Having done this, she will regard \PCTT[\tau] as ontologically unproblematic: it posits just one object (i.e.\ one type $0$ entity). Granted, she may regard \PCTT[\tau] as ideologically profligate, but she thinks that its coherence guarantees the good standing of its ideology. Consequently, Elsa should have no worries about using \PCTT[\tau]. Now, via the Set-from-Types Theorem, Elsa can use \PCTT[\tau] to obtain \Zrkappa. So, according to \citeauthor{LinneboRayo:HOI}, Elsa will have no reason to worry about using \Zrkappa in place of \Zr.

Having come this far, \textcite[290]{LinneboRayo:HOI} hope that Elsa might now be brought to share their view, that `the two hierarchies'---the `ideological' hierarchy of \Zrkappa and the `ontological' hierarchy of \Zr---`constitute different perspectives on the same subject-matter.' But we do not need to consider that further step. We think that Elsa should balk at the line of reasoning given in the previous paragraph. 

\subsection{Ontology relocated}\label{s:LR:ontologyrelocated}
The immediate problem is that \Zr and \Zrkappa are importantly different theories. One of the differences, mentioned in  \S\ref{s:LR:interpret:foundations}, is that Elsa can ask about the height of her set-hierarchy within the object-language of \Zr, whereas she can only ask about the height of a type-hierarchy within a metalanguage. But, as we will now show, this basic issue---of object language versus metalanguage---completely undermines the dialectical force of \citeauthor{LinneboRayo:HOI}'s line of reasoning. 

Recall: Elsa wants to use \Zr, but worries: \emph{What guarantees that enough sets exist?} \citeauthor{LinneboRayo:HOI} recommend that Elsa invoke the Sets-from-Types Theorem. Specifically, they encourage Elsa to fix some limit $\kappa > \omega$ with $\kappa + 2 < \tau$, then work in \PCTT[\tau] to obtain \Zrkappa. 

Inevitably, though, this discussion of $\kappa$ and $\tau$ takes place within some \emph{metatheory} which we use to describe \PCTT[\tau]. After all, as noted in \S\S\ref{s:LR:interpret:differences}--\ref{s:LR:interpret:foundations}, \PCTT[\tau]'s types are supplied \emph{externally}. So, if Elsa is to follow \citeauthor{LinneboRayo:HOI}'s recommendation, she will have to countenance a suitably large index, $\tau$, in the metatheory, so that she can both describe \PCTT[\tau] and obtain \Zrkappa. 

At this point, though, Elsa will simply want to ask: \emph{What guarantees that any suitable $\tau$ exists?} Such an entity would have to stand at the head of a vast sequence of type-indices. Well then: \emph{What guarantees that enough type-indices exist?} Her ontological worries about sets have not have been {addressed}; they have just become worries about the ontology postulated within the {metatheory}. 

\subsection{Ideological-bootstrapping}\label{s:LR:bootstrapping}
This elementary problem undermines \citeauthor{LinneboRayo:HOI}'s way of dealing with Elsa. However, it is worth considering one possible line of response, via (what we call) \emph{ideological-bootstrapping}. This idea is independently interesting, and it will buy \citeauthor{LinneboRayo:HOI} some slack, but not enough slack to save their argumentative strategy.

To define \PCTT[\tau], we must be given the type-index $\tau$. In the previous subsection, we imagined Elsa worrying about whether $\tau$ exists. But---so this line of reply runs---Elsa is mistakenly assuming here that $\tau$ must be a type $0$ entity. Instead, $\tau$ could be a higher-type entity, supplied by some ideologically-rich but ontologically-innocent theory, \PCTT[\sigma]. In turn, $\sigma$ might be some higher-type entity, supplied by some theory \PCTT[\rho]. And so on.\footnote{\textcite[179]{LinneboRayo:RFS} suggest something a little similar, though in terms of the plurally-interpreted hierarchy (see \S\ref{s:illustrationplural}) and in response to a slightly different concern.}

The hope is that, somehow, considering a sequence of such theories will sooth away Elsa's ontological concerns. But, however exactly this line of response is meant to work, it will require that $\tau > \sigma > \rho > \ldots$. After all, Elsa's worries kick in as soon as the \emph{syntax} of \PCTT[\tau] is laid down; so her worries clearly cannot be addressed by starting with some theory \PCTT[\sigma] with $\sigma \geq \tau$. 

This simple observation dictates the form that the attempted reply must take. We are being asked to imagine a sequence of theories, \PCTT[\tau_1], \ldots, \PCTT[\tau_n], as follows:
\begin{listl-0}
    \item\label{bootstrap:base} $\tau_1$ is so small that Elsa has no serious qualms about its existence. 
    \item\label{bootstrap:induction} As we move along the sequence, the ideology strictly increases (i.e.\ $\tau_i < \tau_{i+1}$), but the earlier theory proves the existence of an entity which indexes the terms of the next theory (i.e.\ each \PCTT[\tau_i] proves the existence of something with order-type $\tau_{i+1} > \tau_i$).
    \item\label{bootstrap:payload} \PCTT[\tau_n] proves \Zrkappa, for some suitable $\kappa$.
\end{listl-0}
Call this response \emph{ideological-bootstrapping}, since ideologically weaker theories are used to define ideologically richer theories at step \eqref{bootstrap:induction}.\footnote{\label{footnote:godelbootstrap}\textcite[47]{Godel:PSFM} suggests something similar: given `the system $S_\alpha$ you can\ldots\ take an ordinal $\beta$ greater than $\alpha$ which can be defined in terms of the system $S_\alpha$, and by means of it state the axioms for the system $\beta$ including all types less than $\beta$, and so on.' However, Gödel is not discussing is not trying to defend anything like the argument of \S\ref{s:LR:ontologyrelocated}. As such---and unlike in the context of \emph{ideological}-bootstrapping---Gödel need not confine himself to finite sequences of theories. For discussion of Gödel, see \textcites[37--8n.h]{Feferman:Note1933}[89--93]{Tait:GUPFM}[21--24]{Koellner:SNA}[90--93]{Incurvati:CSFM}.}

(Note that we have assumed that the sequence of theories is \emph{finite}. To explain why, suppose someone instead suggests this: \emph{If Elsa has accepted the existence of an $\omega$-sequence of theories \PCTT[\tau_1], \PCTT[\tau_2], \ldots, then Elsa can bootstrap her way to their limit, \PCTT[\tau_\omega].} This suggestion is spurious. If some \PCTT[\tau_i] is sufficient to introduce an entity with order-type $\tau_\omega$, then we can simply take \PCTT[\tau_\omega] as the $i\mathord{+}1^\text{th}$ theory. The important case is when none of the theories \PCTT[\tau_i] suffices to introduce anything with order-type $\tau_\omega$. But in this case, Elsa will worry whether `taking the limit' is ontologically innocent; for, by assumption, she has not found any ontologically innocent theory which supplies $\tau_\omega$.)

Ideological-bootstrapping might work in specific circumstances. For example, suppose Elsa is comfortable with the existence of $\omega+\omega + 3$, and so has no concerns with the \emph{specification} of \PCTT[\omega+\omega+3]. Invoking the Sets-from-Types Theorem, \PCTT[\omega+\omega+3] proves \typetrans{\Zr}{\omega+\omega}. This allows Elsa to simulate the set-theoretic hierarchy up to $V_{\omega+\omega}$. Living within $V_{\omega+\omega}$, Elsa can find an uncountable $A$ well-ordered by some relation $<$.\footnote{Note that $A$ is not a von Neumann ordinal, i.e.\ $A$ is not well-ordered by $\in$. Still, the existence of some such $A$ and $<$ follows (without Choice) from Hartog's Lemma; see \textcites[185]{Potter:STP}[92]{Incurvati:CSFM}.} Using this, Elsa can define a theory \PCTT[A], whose type indices are the members of $A$ as ordered by $<$. Since $A$ is uncountable, \PCTT[A] is straightforwardly richer than \PCTT[\omega + \omega + 3]. Moreover, using \PCTT[A], Elsa can simulate a much larger chunk of the set-theoretic hierarchy than $V_{\omega+\omega}$; living within that chunk of the hierarchy, she can find larger well-orders; these can be used to supply the indices for some further development of \PCTT{}\ldots and so on. This seems like a case where ideological-bootstrapping might genuinely achieve something. 

Nevertheless, there are hard limits on what ideological-bootstrapping can achieve. In the simplest case, suppose Elsa insists on starting with \PCTT[n], for some finite $n$, because she is uncertain whether there are infinitely many entities. Since \PCTT[n] only yields (surrogates for) finite well-orders, no amount of ideological-boostrapping from this starting point will allow Elsa to obtain any infinite well-order. So, whenever \citeauthor{LinneboRayo:HOI} try to describe any theory \PCTT[\tau] such that $\tau$ is infinite, Elsa will worry whether the theory \emph{itself} even exists.

The shape of this problem is quite general. Say that $\kappa$ is a \emph{hereditary-point} iff $\kappa$ is an infinite cardinal and everything in $V_\kappa$ is strictly smaller than $\kappa$ (so $\omega$ is the first hereditary-point).\footnote{\label{fn:Vfixedbeth}Formally, $\kappa$ is a hereditary-point iff $\kappa$ is an infinite cardinal and any of these equivalent conditions hold (we leave the reader to prove the equivalences): (1)  $(\forall x \in V_\kappa)|x| < \kappa$; (2) $H_\kappa = V_\kappa$, where $H_\kappa = \Setabs{x}{|\clearme{trcl}(x)| < \kappa}$; (3) either $\kappa = \omega$ or $\kappa$ is a $\beth$-fixed point, i.e.\ $\beth_\kappa = \kappa$; (4)  $|V_\kappa| = \kappa$. Characterisation (1) formalizes the definition in the text; (2) gives the idea its name.} When $\kappa$ is a hereditary-point, it is in principle impossible to ideologically-bootstrap your way from below $\kappa$ to above $\kappa$, since every entity below level $\kappa$ is strictly smaller than $\kappa$ itself. 

This problem is especially pertinent, given two facts about hereditary-points and \ZF. First, \ZF proves that there are proper-class-many hereditary-points; but, since any hereditary-point after $\omega$ would be pretty enormous, it is not unreasonable to wonder whether \emph{any} exist; and ideological-bootstrapping cannot quiet such qualms.\footnote{\textcite[258]{Boolos:MBST} had qualms about the existence of the first $\aleph$-fixed point; calling it $\kappa$, he wrote that $\kappa$ is `so big\ldots\ that it calls into question the truth of any theory, one of whose assertions is the claim that there are at least $\kappa$ objects'. The first hereditary-point after $\omega$ is at least as large as Boolos's $\kappa$; it is a $\beth$-fixed point, as in (3) of footnote \ref{fn:Vfixedbeth}, and hence an $\aleph$-fixed point.} Second, the standard models of \ZF are the $V_\kappa$ such that $\kappa$ is strongly inaccessible; and every strongly inaccessible cardinal is a hereditary-point; so ideological-bootstrapping cannot \emph{possibly} address any ontological worries that an Elsa-like character might have about the existence of any standard model of \ZF. 

The argument of \S\ref{s:LR:ontologyrelocated} therefore stands essentially unchanged. \citeauthor{LinneboRayo:HOI} are mistaken to think that cumulative type theories can help us to overcome ontological worries, since the very existence of the (syntactically individuated) theories themselves requires a rich ontology in the metatheory.

\section{CTT: superfluous type-restrictions}
\label{CTT-unjustified}
In \S\ref{s:GodelSuperfluous}, we discussed Gödel's claim that \STT's type-restrictions were `superfluous'. We should now make explicit something which we there left implicit: these type-restrictions are superfluous \emph{given Gödel's aims}. Specifically, Gödel wanted to establish a foundational, `formal system which avoids the logical paradoxes and retains all [of] mathematics' \parencite*[46]{Godel:PSFM}. Given those aims, \CTT's type-restrictions are just as superfluous as \STT's; it is best to follow Gödel, and work with something like \Zr, with its \emph{untyped} variables.

All of this is compatible with the idea that, given \emph{alternative} aims, \STT's or \CTT's type-restrictions might not be superfluous, but deeply important. As we will show in this section, though, \CTT's type-restrictions are \emph{inevitably} `superfluous restrictions', in the sense that any semantics for \CTT also licenses the use of an \emph{untyped} variable and allows the `types' to be defined internally. (Cf.\ points \diffinternal and \diffuntyped from \S\ref{s:LR:interpret:differences}.) So, in a slogan: \CTT's type-restrictions are superfluous, on any semantics.

We will unpack the details in a moment. First, we should explain the phrase `a semantics for \CTT'. As we are using that phrase, a semantics for \CTT is a general framework within which to provide models of \CTT, rather than a specific model of some \CTT[\tau]. (Compare the idea of `the possible worlds semantics for modal language'.) So, in providing a semantics for \CTT, we fix  the meaning of phrases like `a model of \CTT' and `an entity of type $\alpha$'; the latter will be the sort of entity which, according to the semantics, can be the value of a type $\alpha$ variable. 

\subsection{The abstract argument for introducing untyped variables}
\label{s:types-sense}
Our argument begins with an uncontentious point: the stringently-stated rules for \CTT tell us that $y^\beta(x^\alpha)$ is well-formed iff $\beta > \alpha$; but these rules are needlessly stringent, on any given semantics. 

To see this, fix some semantics for \CTT, and let $\beta \leq \alpha$. The formula $x^\alpha \inCTT y^\beta$ is well-formed according to \CTT. So, for any model $\model{M}$ and any type $\alpha$ entity $a^\alpha$ and type $\beta$ entity $b^\beta$ from $\model{M}$, either $\model{M} \models a^\alpha \inCTT b^\beta$ or $\model{M} \models \lnot a^\alpha \inCTT b^\beta$. (Note: what exactly this comes to will depend on the details of the semantics; but we are proceeding abstractly for now and want to consider \emph{any} semantics for \CTT.) Now, as explained in \S\ref{type theories:CTTlr}, the formula $x^\alpha \inCTT y^\beta$ perfectly simulates the formula $y^\beta(x^\alpha)$; that is, it perfectly simulates the notion of applying a type $\beta$ entity to a type $\alpha$ entity. So we could have allowed $y^\beta(x^\alpha)$ to count as well-formed, even though $\beta \ngtr \alpha$. So, \CTT's stringently-stated type-restrictions are needlessly stringent.

To be clear, this is not an \emph{objection} to \CTT's type-restrictions. We are really just repackaging a point we made in \S\ref{type theories:CTTlr}, and also made by  \textcite[282--3]{LinneboRayo:HOI}, that we can liberalise \CTT's stringently-stated formation rules, and allow that $y^\beta(x^\alpha)$ is well-formed for any type-indices $\alpha$ and $\beta$. From a purely formal point of view, this changes almost nothing. So, in what follows, we will simply \emph{allow} that \CTT counts every formula $y^\beta(x^\alpha)$ as well-formed. 

Significantly, though, \CTT still lacks \emph{untyped} variables. But, for exactly the same reason, this is also needlessly stringent, on any given semantics. 

To see this, fix some semantics for (liberally formulated) \CTT. Now $y^\beta(x^\alpha)$ is well-formed for any $\alpha$ and $\beta$. So, for any model $\model{M}$ and any type $\alpha$ entity $a^\alpha$ and type $\beta$ entity $b^\beta$ from that model, either $\model{M} \models b^\beta(a^\alpha)$ or $\model{M} \models \lnot b^\beta(a^\alpha)$. That is, any model assigns a truth value to the application of any entity to any entity, whatever their types might happen to be. So we could have allowed the \emph{untyped} atomic formula, $y(x)$, to count as well-formed: whatever specific values the variables take, the formula would just amount to applying some entity to some entity, which is exactly what the semantics allows. 

The upshot is that any semantics for \CTT also licenses the use of untyped variables. This time, though, we do have an \emph{objection} to \CTT's type-restrictions. Whereas stringently-formulated \CTT can simulate any \emph{typed}-formula $y^\beta(x^\alpha)$, via $x^\alpha \inCTT y^\beta$, it lacks the technical resources to simulate the \emph{untyped}-formula $y(x)$. Untyped variables have to be added by hand. But, once we have added them, we will have moved from a typed to an untyped theory; if we choose to retain `typed' variables, then they will just behave as restricted untyped variables.

Of course, if there had been no consistent way to introduce untyped variables, then \CTT's type-restrictions would have been far from superfluous. But, in this sort of a context, theories like \Zr provide us with a clear method for consistently introducing untyped variables.\footnote{We do not need all of 
\Zr; we can make do with the subtheory \LT. For details, see \S\ref{s:PCTT:appendix} and \textcite{Button:LT1}.} Moreover, they also provide us with a paradigm for how to define the notion of `type' (i.e.\ \emph{rank}) within the theory. So \CTT's type-restrictions are genuinely superfluous.\footnote{\label{fn:pedantry}\emph{Pedantic Objection.} Perhaps \CTT[\tau]'s externally supplied types are not wholly superfluous, since they allow us to formulate the intrinsically infinitary Limit-rules which gives \CTT[\tau] a kind of strength which a recursive theory like \Zr cannot simulate (see \diffrecursive from \S\ref{s:LR:interpret:differences}). \emph{Pedantic Reply.} Those who \emph{want} to lean on \CTT[\tau]'s infinitary features can incorporate them within a \Zr-like setting. We will illustrate how using \Zr itself. For each index $\alpha < \tau$, introduce a new constant, $c_\alpha$; add to \Zr each sentence `$c_\alpha$ is a von Neumann ordinal'; add the sentence `$c_\alpha \in c_\beta$' iff $\alpha < \beta$; for each limit $\lambda < \tau$, add the infinitary rule: from $\phi(c_\alpha)$ for all $\alpha < \lambda$, infer $(\forall x \in c_\lambda)\phi(x)$.} 

\subsection{Illustration: the class semantics}\label{s:illustrationclass}
The argument of the previous subsection is very abstract. To make it more concrete, in this subsection and the next, we will consider two specific semantics in detail: the class semantics, and the plural semantics. Just as our abstract argument predicts, both semantics clearly license the use of untyped variables.

(To avoid any unfortunate misunderstandings: we offer these semantics merely as illustrations. When we say that no semantics could justify the adoption of \CTT's type-restrictions, we are not making an inductive inference from these two examples; that conclusion was established by the abstract argument of \S\ref{s:types-sense}.)

We start by considering the class semantics. To define a model for \CTT within this semantics, we first specify some suitable set of urelements, $U$. We then stipulate that the type $\alpha$ entities are the members of $U_{\alpha+1}$, where we define:
\begin{align*}
    U_1 & \coloneq U \cup \{\emptyset\} & 
    U_{\alpha+1} & \coloneq \powerset(U_\alpha) \cup U & 
    U_\beta & \coloneq \bigcup_{\alpha<\beta} U_\alpha\text{ for limit }\beta 
\end{align*}
Finally, we offer a general clause governing the semantics of atomic sentences: 
\begin{listclean}
    \item `$b^\beta(a^\alpha)$' is true iff the referent of `$a^\alpha$' is a member of the referent of `$b^\beta$'.
\end{listclean}\noindent
Uncontroversially, \CTT is sound for the class semantics. A stringently-typed formula like `$b^2(a^0)$' will be true (in a model) iff the referent of `$a^0$' is a member of the referent of `$b^2$'. A liberally-typed formula like `$b^0(a^2)$' will also be true (in a model) iff the referent of `$a^2$' is a member of the referent of `$b^0$'; and this will inevitably be false, since the latter is guaranteed to be an urelement, i.e.\ an individual without members.

Our semantic clause for atomic sentences employed type restrictions. However, on the class semantics, the type-restrictions are straightforwardly superfluous. We can easily offer a similar semantic clause for untyped terms:
\begin{listclean}
    \item `$b(a)$' is true iff the referent of `$a$' is a member of the referent of `$b$'.
\end{listclean}
Otherwise put: there is no barrier to introducing untyped variables, whose values can be any individual or class. Of course, given the old paradoxes, we will have to take care in introducing untyped variables. However, as we have already discussed, \Zr-like theories show us how to do this safely. 

\subsection{Illustration: the plural semantics}\label{s:illustrationplural}
The class semantics concerns a class-hierarchy built from a basis of individuals. The plural semantics concerns a plural-hierarchy built from a similar basis.\footnote{\textcite{Rayo:BP} develops this plural interpretation.} In a little more detail, we use the phrase `plural$^*$' as a catch-all for whatever we find at any level in the plural hierarchy, i.e., any object, any objects, any objectses, \ldots, any objects(es)$^\alpha$\ldots.\footnote{\label{fn:pseudosingular}Our word `plural$^*$' is a `pseudo-singular device', in the sense of \textcite[ch.15]{OliverSmiley:PL}; in natural language, it infelicitously behaves like a singular term. \textcite[\S11.8]{FlorioLinnebo:MO} use `higher plurality' here.} We then offer this general clause governing the semantics for atomic sentences:
\begin{listclean}
    \item `$b^\beta(a^\alpha)$' is true iff what `$b^\beta$' refers to includes what `$a^\alpha$' refers to.\footnote{The inclusion is \emph{vertical} in the sense of \textcite[ch.15]{OliverSmiley:PL}. Vertical inclusion only ever holds between plurals$^*$ of different levels, and is analogous to set-membership. Vertical inclusion is to be contrasted with \emph{horizontal} inclusion, which is analogous to subsethood: $b$ horizontally includes $a$ iff $b$ vertically includes everything that $a$ vertically includes.}
\end{listclean}\noindent
So `$b^2(a^0)$' is true iff what `$b^2$' refers to includes what `$a^0$' refers to; and `$b^0(a^2)$' is true iff what `$b^0$' refers to includes what `$a^2$' refers to. But equally, the semantic clause applies perfectly well to untyped terms:
\begin{listclean}
    \item `$b(a)$' is true iff what `$b$' refers to includes what `$a$' refers to.
\end{listclean}
Again: there is no barrier to introducing untyped variables, whose values can be any plural$^*$.

As before, care must be taken to preserve consistency. But we know how to take care: roughly stated, we just need to do for plurals$^*$ what \Zr does for classes/sets. In more detail, instead of setting up a plural$^*$-hierarchy using type-restricted variables with \emph{externally} supplied type-indices, we can reason about plurals$^*$ using an untyped variable, with the plurals$^*$ arranged into a cumulative hierarchy according to their rank (with `rank' defined within the theory, using our untyped variable). And this work has been carried out carefully: \textcites[ch.15]{OliverSmiley:PL}[\S12.6]{FlorioLinnebo:MO} both present consistent plural logics featuring untyped variables. Indeed, Florio and Linnebo develop their untyped plural logic precisely by starting with the \CTT on the plural semantics, and then collapsing the types in the way that we have described.

\section{\STT: type-restrictions justified}
\label{STT-justified}
We have argued that \CTT's type-restrictions are inevitably superfluous. They are unnecessary for the aim of providing a foundational theory for mathematics, and they cannot be justified semantically, since any semantics for \CTT will permit the introduction of an untyped variable.

In this section, we will show that \STT's type-restrictions are not similarly superfluous. We can justify the adoption of \STT's type-restrictions by invoking the \emph{Fregean semantics}. Indeed, on this semantics, a formula is intelligible iff it is well-formed in \STT.

\subsection{Against referentialism}
\label{s:referentialism}

In \S\S\ref{s:illustrationclass}--\ref{s:illustrationplural}, we used the class and plural semantics to illustrate our objection to \CTT. Both of these semantics are \emph{referentialist}. By this we mean that both semantics treat every type of term as a type of \emph{referring} term: every type of term performs the same semantic role---\emph{referring}---and all that changes is what they refer to---individuals, classes/plurals$^*$, or something else.\footnote{We are speaking as if variables \emph{refer}. This is one way to gloss a Tarskian referentialist approach to semantics: the value of a variable (on a Tarskian valuation) can be thought of as the variable's referent (on the valuation). In certain contexts, describing variables as \emph{referring} is misleading (see \cite[ch.1]{ButtonWalsh:PMT}), but we do not think it will do any harm here. If we wanted, we could say that a semantics is \emph{referentialist} iff it treats every type of \emph{constant} as a referring term, and then use a Robinsonian or hybrid approach to handle variables (again, see \cite[ch.1]{ButtonWalsh:PMT}).}

The class and plural semantics render \CTT's type-restrictions superfluous, precisely because they are referentialist. After all, if every type of term performs the same kind of semantic role as every other type of term, then every type of term can be meaningfully substituted for every other type of term. In that case, as we argued in \S\ref{s:types-sense}, the semantics will also allows us to introduce an untyped variable. It follows, immediately, that any semantics which might justify \STT's type-restrictions will have to be non-referentialist; in other words, it will have to assign different kinds of semantic role to different types of term.

Now, at one time, this might have seemed like an impossible demand. According to the old Quinean \parencite*[66--8]{Quine:PL} orthodoxy, we can only quantify into the position of a referring term; so type theory---which allows us to bind variables of every type---must be given a referentialist semantics. Fortunately, times have changed, and philosophers are increasingly willing to accept quantification into other kinds of position.\footnote{See \textcites[ch.\ 3]{Prior:OT}{Boolos:NP}{RayoYablo:ND}[458--60]{Williamson:E}[ch.\ 5]{Williamson:MLM}{Wright:OQPP}{Trueman:PP}.} In what follows, we will simply assume that the old Quinean orthodoxy is mistaken, and will present a particular non-referentialist semantics---the Fregean semantics---which justifies \STT's type-restrictions. 

\subsection{Conceptual but referentialist semantics}
\label{s:conceptualreferentialism}
The Fregean semantics is a variety of \emph{conceptual} semantics. On a conceptual semantics, type theories are theories of predication:\footnote{This point is emphasised throughout \textcite{FlorioJones:UQSTT}.} `$a^0$' is a name which refers to an object; `$b^1$' is a first-level predicate which expresses a property of objects (a \emph{type 1 property});\footnote{We have taken the label `conceptual semantics' from \textcite[272]{LinneboRayo:HOI}, who use `concept' instead of `property'. Of course, \citeauthor{LinneboRayo:HOI} are following Frege here. However, this use of `concept' is potentially misleading; we prefer `property', which avoids any psychological overtones.} `$c^2$' is a second-level predicate which expresses a property of properties of objects (a \emph{type 2 property}); and so on. 

This way of characterising conceptual semantics is schematic, and we get different versions of the semantics when we supply different accounts of what it means for a predicate to \emph{express} a property. On one view of predication, predicates `express' properties in the sense that they \emph{refer} to properties, just as names refer to objects. To illustrate, take the following sentence:
\begin{listn-0}
    \item\label{ex:nc:wise}Socrates pontificates
\end{listn-0}
According to this view of predication, `pontificates' refers to the property \emph{Pontification}.\footnote{This was arguably the standard way of thinking about predication before Frege introduced his alternative (see below), and plenty of philosophers after Frege have advocated versions of it too: see \textcites{Strawson:SPLG}{Strawson:CPPC}[ch.\ 4]{Bealer:QC}{Wiggins:SRP}{Gaskin:BRCUP}{Gaskin:UP}.} 
Clearly, combining this account of predication with the conceptual semantics yields 
another brand of referentialism. Every type of term is still referential; all that changes is whether it refers to an ordinary individual (like Socrates) or to something within a property-hierarchy (like \emph{Pontification}). We then have the following semantic clause for atomic sentences:
\begin{listclean}
    \item `$b^\beta(a^\alpha)$' is true iff the referent of `$a^\alpha$' instantiates the referent of `$b^\beta$'
\end{listclean}\noindent
This allows us to make sense of `$b^\beta(a^\alpha)$', for \emph{any} types $\alpha$ and $\beta$. For example, `$b^0(a^0)$' is true iff the referent of `$a^0$' instantiates the referent of `$b^0$'. Now, admittedly, this formula would correspond to something slightly peculiar in natural language. If `$a^0$' referred to Socrates, and `$b^0$' referred to Plato, then we might try to render `$b^0(a^0)$' as:
\begin{listn}
    \item\label{ex:nc:plato}Socrates Plato
\end{listn}
This is scarcely grammatical English. Still, for referentialists about predication, \eqref{ex:nc:plato} is intelligible: it says that Socrates instantiates Plato. Indeed, precisely this point is made by \textcite{Magidor:LDTC}, who insists that \eqref{ex:nc:plato} is perfectly meaningful and trivially false. We are not agreeing with Magidor here, but we do think that \emph{referentialists} about predication should agree with her. Moreover, and as in \S\ref{s:types-sense}, \emph{referentialists} about predication will ultimately find type-restrictions superfluous; nothing will prevent them from introducing untyped variables and insisting that `$b(a)$' is true iff the referent of `$a$' instantiates the referent of `$b$'.

\subsection{Fregean semantics}
\label{s:Fregean-semantics}
There is, however, a \emph{non-referentialist} version of the conceptual semantics: it is a \emph{Fregean} semantics. 

Unlike referentialists, {Fregeans} do not think that predicates refer to properties (not, at least, in anything like the sense that a name `refers').\footnote{For discussion of the very different sense in which predicates could be said to refer, see \textcite[chs.\ 4--6]{Trueman:PP}.}
Rather, they think that the role of a predicate is to \emph{say something of} an object; for example, `pontificates' says of an object that it pontificates. This is the sense in which Fregeans think that predicates are `incomplete', and they indicate this by writing their predicates with gaps. So rather than writing the predicate in \eqref{ex:nc:wise} as `pontificates', they write it as `\metavariablea{x} pontificates', where `\metavariablea{x}' marks a gap for a name to go. We can then say that sentence \eqref{ex:nc:wise} is true iff `\metavariablea{x} pontificates' says something true of the referent of `Socrates', i.e.\ iff Socrates pontificates.\footnote{This account of predication is what we take to be suggested by Frege's (e.g.\ \cites*{Frege:FC}{Frege:CO}[\S31]{Frege:GA}) discussions of predication; however, we do not want to commit to any exegetical claims here. It is worth noting that the gap between our Fregeans and the referentialists about predication need not be as large as it initially appears. Even if referentialists think of words like `pontificates' as referring terms, on a par with names like `Socrates', concatenation behaves like a Fregean predicate: `$\metavariablea{x}\metavariablea{y}$' says of a pair of objects that the former instantiates the latter. This point is originally due to \textcite[192--3]{Frege:CO}, and is further developed by \textcite[\S\S3.4\,\&\,8.4]{Trueman:PP}.}

From this Fregean perspective, \eqref{ex:nc:plato} is not just ungrammatical, but \emph{unintelligible}. We arrive at it by taking \eqref{ex:nc:wise} and replacing its predicate, `\metavariablea{x} pontificates', with a referring name, `Plato'. Names and predicates are made to work together, but two names cannot work together in the same way. It is not within a name's remit to \emph{say} anything of an object; names just refer to objects. And that is why \eqref{ex:nc:plato} is a meaningless string: neither name says anything of the referent of the other (let alone something true or false).

Now consider the following sentence:
\begin{listn}
    \item\label{ex:nc:somewise} Someone pontificates
\end{listn}
This sentence is not made by combining a predicate with a name. Instead, it is made by combining two predicates, `\metavariablea{x} pontificates' and `Someone \textsf{Y}'. Crucially, though, these are two different types of predicates: `\metavariablea{x} pontificates' is a \emph{first-level} predicate, meaning that `\metavariablea{x}' marks a gap for a name; `Someone \metavariableb{y}' is a second-level predicate, meaning that `\metavariableb{y}' marks a gap for a first-level predicate. Just as first-level predicates play a different kind of semantic role from the names they can take as input, second-level predicates play a different kind of semantic role from the first-level predicates that they can take as input. We might describe this role thus: a second-level predicate says something of things said of objects. This means that \eqref{ex:nc:somewise} is true/false iff `Someone \metavariableb{y}' says something true/false of what `\metavariablea{x} pontificates' says of objects. Specifically: `Someone \metavariableb{y}' says something true of what `\metavariablea{x} pontificates' says of objects iff `\metavariablea{x} pontificates' says something true of someone; and it says something false of what `\metavariablea{x} pontificates' says of objects iff `\metavariablea{x} pontificates' says something false of everyone.

Again, from this Fregean perspective, it is easy to see that we cannot meaningfully substitute a name for the first-level predicate in \eqref{ex:nc:somewise}. Attempting to do this would yield:
\begin{listn}
    \item\label{ex:nc:someplato} Someone Plato
\end{listn}
This string is not just ungrammatical, but meaningless. To be meaningful, the input to `Someone \metavariableb{y}' must be the kind of expression that {says something of} objects. But `Plato' \emph{refers to} an object, rather than \emph{saying anything of} objects (let alone something true of someone or false of everyone). So, if we try to plug `Plato' into the argument-place of `Someone \metavariableb{y}', we end up with garbage.\footnote{\textcite[Introduction, ch.II, \S4]{WhiteheadRussell:PM1} present a similar argument (in their distinctive terminology).}

The crucial point is that, on the Fregean semantics, different types of term play different types of semantic role: `$a^0$' is a name which \emph{refers} to an object; `$b^1$' is a first-level predicate which \emph{says something of} objects; `$c^2$' is a second-level predicate which \emph{says something of things said of objects}; and so on. And rather than having a single semantic clause which applies to all atomic sentences, we have different clauses for different types of predication:
\begin{listclean}
    \item`$b^1(a^0)$' is true iff `$b^1$' says something true of the referent of `$a^0$'
    \item`$c^2(b^1)$' is true iff `$c^2$' says something true of what `$b^1$' says of objects
    \item$\ldots$
\end{listclean}
These semantic clauses allow us to make sense of `$b^n(a^m)$' iff $n=m+1$. This is how the Fregean semantics justifies \STT's type-restrictions: a formula is intelligible on the Fregean semantics iff it is well-formed in \STT. 

For the same reason, the Fregean semantics also prohibits the introduction of untyped variables. Untyped variables would need to be able to take any entity of any type as their values. But, on the Fregean semantics, there is no one sense in which different types of entity could be the `value' of a variable; the sense in which an object is the value of a type 0 variable is incommensurable with the sense in which a type 1 property is the value of a type 1 variable.

To be clear, we are not trying to argue here that anyone should adopt the Fregean semantics.\footnote{For an extended argument to that effect, see \cite{Trueman:PP}.}
Our point here is just that \STT's type-restrictions, unlike \CTT's, are justified by at least one semantics.\footnote{We have considered two conceptual semantics: referentialist and Fregean. \textcites{Wright:GoS}{MacBride:IR}{Liebesman:PA}{Rieppel:BS} offer a third approach, which attempts to provide a \emph{middle-way} between referentialism and Fregeanism. They agree with referentialists that `\metavariablea{x} pontificates' denotes \emph{Pontification}, but they agree with Fregeans that `\metavariablea{x} pontificates' says of objects that they teach. Given the latter point, they agree that first-level predicates play a different kind of semantic role from names; so they agree with Fregeans that `$c^2(a^0)$' is unintelligible. However, unlike Fregeans, they cannot embrace \STT: according to the middle-way, every type $1$ property is also a type $0$ object, but \STT-Comprehension straightforwardly entails that there are strictly more type $1$ properties than objects. Moreover, one of us \parencite[chs.\ 4 \& 8]{Trueman:PP}
has also argued at length that this middle-way is philosophically incoherent.}

\subsection{`Cumulative types' as ambiguous}
\label{conceptual:equivalence}
We have just argued that the Fregean semantics prohibits the introduction of untyped variables. But what it cannot prohibit, of course, is the introduction of \emph{ambiguous} variables, which sometimes behave as one type, and sometimes behave as another. And in fact, this provides the Fregeans with one way of starting to make sense of \CTT. Specifically, they can treat $a^0$ as an ambiguous term: in $b^1(a^0)$, it behave as a \emph{name}, and so refers to an object; in $c^2(a^0)$, it behaves as a \emph{first-level predicate}, and so says something of an object.

If that is how we are to read formulas like $c^2(a^0)$, though, then they no longer represent any departure from \STT. Working in \STT, we can introduce an injective type-raising function, $\uptype$, from objects to type $1$ properties; so $a^0$ is an object, but $\uptype a^0$ is a type $1$ property (We also lay down rules to ensure that $\uptype a^0$ behaves as a suitable surrogate for `the $a^1$ such that $a^1 \eqCTT a^0$'; for details, see \S\ref{equivalence:CTT}.) To avoid ambiguity, we can then rewrite $c^2(a^0)$ as $c^2(\uptype a^0)$, which is now well-formed according to \STT's type-restrictions. 

This idea can be extended across all finite types. The result is \STTu, which augments \STT with a theory of type-raising functions, like $\uptype$, whilst retaining \STT's type-restrictions. We can then prove the following strong result: \CTT[\omega] and \STTu are \emph{definitionally equivalent} (where \CTT[\omega] is the fragment of \CTT which uses all and only finite  type indices; for details, see \S\ref{equivalence:CTT}).

There is, however, an important limitation to this equivalence result. Since entities do not really cumulate in \STTu, \STTu cannot accommodate transfinite types, and so cannot recapture any transfinite uses of \CTT. This is significant, because \citeauthor{LinneboRayo:HOI}'s main argument for \CTT invokes transfinite types (see \S\ref{optimism and union}). For this reason, \citeauthor{LinneboRayo:HOI} must have intended \CTT to be taken at face-value, rather than as a disguised form of \STTu. Unfortunately for them, though, nothing could justify \CTT's type-restrictions, taken at face-value; that was the lesson of \Sref{CTT-unjustified}.

\section{The Semantic Argument}
\label{optimism and union}
We have established an important difference between \CTT and \STT: nothing could justify 
\CTT's type-restrictions, but the Fregean semantics can justify \STT's type-restrictions. In this section, we will respond to \citeauthor{LinneboRayo:HOI}'s Semantic Argument for \CTT. This argument is designed to show that \STT is semantically unstable, and that restoring stability pushes us to \CTT. We will not present any new objections to \CTT in this section; our aim is simply to explain how an advocate of the Fregean semantics should reply to \citeauthor{LinneboRayo:HOI}.

\subsection{Naïve Optimism and Naïve Union}\label{s:naivety}
\citeauthor{LinneboRayo:HOI} introduce us to two notions:
\begin{listbullet}
    \item A \emph{$\beta$-order language} is a language which contains variables of all (and only) the types $\alpha < \beta$.\footnote{\label{fn:constants}It can also contain type $\alpha\mathord{+}1$ constants, for any $\alpha < \beta$.} 
    \item A \emph{generalized semantic theory} for a language is `a theory of all possible interpretations the language might take' \parencite[275]{LinneboRayo:HOI}. In particular, a generalized semantic theory for a $\beta$-order language provides an interpretation which allows any type $\alpha$ entity to be the value of a variable $x^\alpha$, for each $\alpha< \beta$.\footnote{This is very slightly different from what \textcite[300]{LinneboRayo:HOI} actually say: they consider interpretations of constants (see footnote \ref{fn:constants}). The particular requirement on generalized semantic theories is an application of the principle that for each $\alpha$, it is possible to quantify unrestrictedly over all entities of type $\alpha$. (\textcite[274]{LinneboRayo:HOI} only state this principle for type $0$, but their argument requires that the principle apply to all types; \textcite[\S11.5]{FlorioLinnebo:MO} explicitly commit themselves to the fully general principle.) We discuss the broader concept of \emph{absolute generality} in \S\ref{type theories:CTTfj}.}
\end{listbullet}
These notions are connected by two formal results \parencite[Appendix B]{LinneboRayo:HOI}:
\begin{listclean}
    \item[\emph{Blocker Theorem.}] No language can provide a generalized semantic theory for itself.
    \item[\emph{Enabler Theorem.}] For any $\beta$, let $\beta^* = \beta + 2$ if $\beta$ is a limit and $\beta^* = \beta+1$ otherwise; then a $\beta^*$-order language can provide a generalized semantic theory for a $\beta$-order language.
\end{listclean} 
The Blocker Theorem holds by familiar, liar-like reasoning. Moreover, as  \textcite[162--3]{FlorioShapiro:STTTAG} note, it shows that these two principles are jointly inconsistent: 
\begin{listclean}
    \item[\emph{Naïve Optimism.}] Any language can be given a generalized semantic theory.
    \item[\emph{Naïve Union.}] For any languages, there is a union language, which combines all the expressions of those languages.
\end{listclean}
To see the problem: by Naïve Union, there is a language, $\lang{U}$, which is the union of all languages; by Naïve Optimism, $\lang{U}$ can be given a generalized semantic theory in some language $\lang{G}$; by the Blocker Theorem, $\lang{G}$ is not a sub-language of $\lang{U}$; but this contradicts the fact that $\lang{U}$ is the union of all languages, including $\lang{G}$.

\subsection{\citeauthor{LinneboRayo:HOI}'s Semantic Argument}
\citeauthor{LinneboRayo:HOI} avoid contradiction by restricting Naïve Union as follows:
\begin{listclean}
    \item[\emph{Limited Union}.] For any limit $\lambda$, if there is a $\beta$-order language for every  $\beta < \lambda$, then there is also a $\lambda$-order language.\footnote{\textcite[276]{LinneboRayo:HOI}. Note that they also \parencites*[294]{LinneboRayo:HOI}[176]{LinneboRayo:RFS} consider a second, slightly differently restricted principle: \emph{For any `definite totality' of languages, there is a union language}. For our purposes, there is no significant difference between these formulations. \textcite[179--80]{LinneboRayo:RFS} treat `definite totality' as an unanalysed notion. However,  the function of this notion is as follows: given any `definite totality' of languages, we can comprehend a limit-index, $\lambda$, which acts as an upper bound of the orders on the languages among that `definite totality'. (This notion of an `upper bound' makes sense, since every \CTT-like language has well-ordered indices.) So, once we recall that we have only insisted that our type-indices be \emph{well-ordered}, not that they \emph{be ordinals}, the two principles come to the same thing.}
\end{listclean}
Having restricted Naïve Union in this way, \citepossess[275--81]{LinneboRayo:HOI} \emph{Semantic Argument} for \CTT now gets going. Here is a very brief summary. Suppose we start with an ordinary first-order language. By Naïve Optimism, this language has a generalized semantic theory. By the Blocker Theorem, this generalized semantic theory cannot be given in a first-order language; but, by the Enabler Theorem, it can be given in a second-order language. Naïve Optimism now requires that this second-order language has a generalized semantic theory; as before, the Blocker and Enabler theorems will lead us to provide this semantics in a third-order language. This process repeats, running through every finite order. At this point, Limited Union kicks in, giving us an $\omega$-order language which combines all of the finite orders into a single language. To present a generalized semantic theory for this language, Naïve Optimism and the Blocker and Enabler Theorems push us up to an $\omega\mathord{+}2$-order language. And there is now no stopping us: Naïve Optimism, Limited Union and the two theorems keep pushing us to countenance languages of higher and higher orders. Moreover, when we supply the semantics for variables of some limit type $\lambda$, the only plausible option is to allow them to take all entities of all types $<\lambda$ as values. And this requires that at least some of our types be \emph{cumulative}.

\subsection{Rebutting the Semantic Argument}
We agree with the following conditional: \emph{if} we accept both Naïve Optimism and Limited Union, \emph{then} there is good reason to embrace \CTT. Our response is to reject Naïve Optimism (and to insist on Naïve Union). However, we will show that our stance is more principled that \citeauthor{LinneboRayo:HOI}'s.

\textcite[286]{LinneboRayo:HOI} motivate Limited Union as follows: whenever you are `prepared to countenance languages of order $\beta$ for every $\beta < \lambda$', you `should also countenance languages of order $\lambda$', since `they would be made up entirely of vocabulary that had been previously deemed legitimate'. This line of reasoning is compelling. However, it clearly generalizes, to provide a motivation for Naïve Union. After all: whenever you are prepared to countenance some languages, you should also countenance their union, for that union would be made up entirely of vocabulary that had been previously deemed legitimate. In short: the only motivation \citeauthor{LinneboRayo:HOI} offer for Limited Union is \emph{really} a motivation for Naïve Union.

Of course, Naïve Union is inconsistent with Naïve Optimism. So, if there were a stellar argument in favour of Naïve Optimism, we could see the retreat from Naïve Union to Limited Union as a simple instance of the heuristic that, on encountering a contradiction, we should aim to get as close as we can to what we initially wanted, without falling into inconsistency.\footnote{Cf.\ \textcite[485, 492ff.]{Maddy:BAI} on the rules of thumb `\emph{one step back from disaster}' and `\emph{maximize}'; and cf.\ \textcite[274, esp.\ fn.8]{LinneboRayo:HOI} on the rule of thumb: `Because we can.'} Regrettably, though, \citeauthor{LinneboRayo:HOI} provide no argument for Naïve Optimism. So, \emph{prima facie}, an equally good instance of that heuristic would be to accept Naïve Union and instead restrict Naïve Optimism. This threatens to leave us with a deadlock, between those who want to restrict Naïve Union (and so embrace \CTT), and those who want to restrict Naïve Optimism (and so might reject \CTT). 

Fortunately, the argument of \S\ref{STT-justified} provides a principled way to break the deadlock: if we are working with a Fregean semantics for the types, then we should restrict Naïve Optimism. Specifically, we should replace Naïve Optimism with the following:
\begin{listclean}
    \item[\emph{Finite Optimism}.] Any language of any finite order can be given a generalized semantic theory.
\end{listclean}
To be clear: the motivation for this restriction is not simply to avoid contradiction. (As far as restoring formal consistency goes, {Finite Optimism} is serious overkill.) Rather, {Finite Optimism} expresses the exact amount of optimism which is even \emph{coherent} on the Fregean semantics. Since Fregean types cannot cumulate, we cannot make any sense of the idea of an $\omega\mathord{+}2$-order language on the Fregean semantics. {Finite Optimism} and Naïve Union push us to countenance an $\omega$-order language, like \STT itself, but we are pushed no further. Otherwise put: \STT is the \emph{principled} limit on Fregean types.

\section{Partially cumulative types}
\label{type theories:CTTfj}
In this paper, we have critically discussed \CTT, which is the approach to cumulative types favoured by \citeauthor{LinneboRayo:HOI}. In this final section, we will discuss an alternative approach to cumulative types, due to \textcite[\S5]{FlorioJones:UQSTT}. 

\CTT is cumulative in two senses: first, $b^\beta(a^\alpha)$ is well-formed whenever $\beta > \alpha$; second, the values of $x^\beta$ include all of the values of $x^\alpha$, whenever $\beta \geq \alpha$. \citeauthor{FlorioJones:UQSTT}' cumulative type theory---call it \FJT---is cumulative only in the first of these senses. Indeed, for them, no type $\alpha$ entity is a type $\beta$ entity, when $\alpha \neq \beta$. As we will see, this difference between \CTT and \FJT is a double-edged sword: on the one hand, it provides Florio and Jones with the means to defend \FJT from the argument we offered against \CTT in \S\ref{CTT-unjustified}; on the other hand, it leaves so little distance between \FJT and \STT, that \FJT is best seen as a misleadingly formulated version of \STT.

\subsection{\FJT}
\label{s:CTTfj-formal}
Since entities do not cumulate up the types in \FJT, its quantifier rules must be more restrictive than \CTT's (see \S\ref{type theories:CTTlr}). Indeed, \FJT has exactly the same quantifier rules as \STT (see \S\ref{type theories:STT}). Consequently, in \FJT, you cannot generalize about \emph{everything} that has a type $2$ property by writing $\forall x^1(a^2(x^1) \lonlyif \phi(x^1))$.\footnote{Throughout this section, we assume a conceptual semantics, and so speak of type $n > 0$ entities as \emph{properties}. \textcite[45--7]{FlorioJones:UQSTT} offer \FJT as a theory of predication, and we also think that \STT is best understood as a theory of predication.} In \FJT, that formula generalizes over every type 1 property that has $a^2$, but it says nothing about any objects that have it. To cover everything that might have $a^2$, we must conjoin that formula with $\forall x^0(a^2(x^0) \lonlyif \phi(x^0))$. Indeed, to generalize over everything that might have a type $n$ property, we will need $n$ conjuncts. This is illustrated by \citepossessalt[55]{FlorioJones:UQSTT} version of Comprehension:
\begin{listclean}
    \item[\emph{\FJT-Comprehension}.] $\exists z^n\bigland_{i < n}\forall x^i (z^n(x^i) \liff \phi_i(x^i))$, for each $n > 0$, whenever each $\phi_i(x^i)$ is well-formed and does not contain $z^n$.
\end{listclean}
The various $\phi_i$s need have nothing in common, so this is an instance of \FJT-Comprehension:
    $$\exists z^2(\forall x^1(z^2(x^1) \liff x^1=x^1)\land\forall x^0(z^2(x^0) \liff x^0 \neq x^0))$$
As \textcite[61]{FlorioJones:UQSTT} observe, this entails $\forall x^0 \forall y^1\phantom{)} x^0 \neqCTT y^1$, where $\eqCTT$ is defined as before. More generally, in \FJT, if $n \neq m$ then $\forall x^n \forall y^m\phantom{(}x^n\neqCTT y^m$. So \FJT contradicts \CTT's Type-Raising principle (see \S\ref{type theories:CTTlr}).

\subsection{\FJT is finitary}\label{s:CTTfjFinitary}
In formulating \FJT-Comprehension, we have reverted to using natural numbers as type indices, rather than allowing that types might be transfinite (contrast the formulation of \CTT-Comprehension in \S\ref{type theories:CTTlr}). We have done this for a simple reason: formulating \FJT-Comprehension for a transfinite type, $\beta$, would require infinitary conjunction:
    $$\exists z^\beta \bigland_{\alpha < \beta} \forall x^\alpha (z^\beta (x^\alpha) \liff \phi_\alpha(x^\alpha))$$ 
But \FJT does not allow for infinitary conjunction. Consequently, \FJT cannot comprehend any \emph{transfinite} types.\footnote{At least: \citeauthor{FlorioJones:UQSTT} nowhere discuss infinitary conjunction, and only ever use natural numbers as type indices.}

Much of our discussion of \CTT focussed on the Sets-from-Types Theorem (see \S\S\ref{s:LR:interpreting}--\ref{s:DiscussingSetsFromTypesTheorem}). However, due to its finitary nature, \FJT cannot establish any similar result. Indeed, if we add surrogates for purity and extensionality to \FJT, the resulting theory is \emph{decidable}.\footnote{The surrogate for extensionality is the scheme, for all $n > 0$: $\forall x^{n}\forall y^{n}(\bigland_{i < n}\forall z^i(x^{n}(z^i) \liff y^{n}(z^i)) \lonlyif x^{n} = y^{n})$); the surrogate for purity is Type-Purity (see \S\ref{s:formulatingPCTT}). To see that the resulting theory is decidable, note two facts: (i) all its variables are explicitly typed; and (ii) for each $n$, it proves that there are exactly $h(n)$ type $n$ entities, where $h(0) = 1$ and $h(n+1) = 2^{h(0) + \ldots + h(n)}$; it follows that every quantifier provably has a fixed finite range.}

\subsection{Interpreting \FJT's types}
\label{s:CTTfj-ambiguous?}
Having discussed the Sets-from-Types Theorem, we then argued that \CTT's type-restrictions cannot be justified semantically (see \S\ref{CTT-unjustified}). We began with \citepossess[282--3]{LinneboRayo:HOI} observation that, even if we stuck with the stringent formation rules for \CTT, we could always apply $b^\beta$ to $a^\alpha$ in \CTT with the formula $a^\alpha\inCTT b^\beta$, which is defined as follows (where $\gamma = \max(\alpha, \beta)+1$):
\begin{align*}
	a^\alpha \inCTT b^\beta & \mliffdf (\exists x^{\gamma} \eqCTT b^\beta) x^{\gamma}(a^\alpha)
\end{align*}
We then argued that, since every type of entity can be applied to every type of entity in \CTT, there can be no barrier to introducing untyped variables.

This line of argument is not straightforwardly applicable to \FJT. Since entities do not cumulate up the types in \FJT, $b^n$ is not identical to any entity of type $k \neq n$. So, as \textcite[\S7]{FlorioJones:UQSTT} stress, it is doubtful whether $a^m \inCTT b^n$, i.e.\ $(\exists x^{k} \eqCTT b^n) x^{k}(a^m)$ with $k = \max(m,n)+1$, provides us with a way of applying $b^n$ to $a^m$ in \FJT.

Nonetheless, we are still left with the question of how to justify the type-restrictions imposed by \FJT. \textcite[45--7]{FlorioJones:UQSTT} explicitly intend to provide \FJT with some version of the conceptual semantics, but it is unclear which version they could have in mind. The referentialist version that we discussed in \S\ref{s:conceptualreferentialism} licenses the use of an untyped variable; the Fregean version that we discussed in \S\ref{s:Fregean-semantics} justifies \STT's type- restrictions; so it seems that neither of these versions of the conceptual semantics could serve their purpose.

In fact, appearances are somewhat misleading here. It is true that, when \FJT is taken at face value, the Fregean semantics cannot justify its type-restrictions. However, it turns out that the Fregean semantics can make good sense of \FJT, if its terms are interpreted as being systematically ambiguous, in the following way: in `$c^2(b^1)$', `$c^2$' expresses a type $2$ property, but in `$c^2(a^0)$', it expresses a type $1$ property. (Compare the interpretation of \CTT in \STTu of \S\ref{conceptual:equivalence}.)\footnote{Eagle-eyed readers will notice a slight difference between this and \S\ref{conceptual:equivalence}. When dealing with \CTT, we read $c^2(a^0)$ as $c^2(\uptype a^0)$, since \CTT licenses Type-Raising, which projects entities upwards through the levels of the type hierarchy. By contrast, \FJT contradicts Type-Raising; and \FJT-Comprehension effectively projects entities downwards.} 

This ambiguity can easily be handled by augmenting \STT with a theory of type-lowering relations. We start by introducing a type-lowering relation, $\typedownrel$, from type $2$ to type $1$. We then read `$c^2(b^1)$' verbatim, but treat `$c^2(a^0)$' as shorthand for `$\forall x^1 (c^2\typedownrel x^1\rightarrow x^1(a^0))$'. This latter formula is perfectly well-formed according to \STT's type-constraints, and the idea can be extended across all types. The resulting theory is \STTd. We can then prove that \FJT and \STTd are {definitionally equivalent}. (For details and proof, see  \S\ref{equivalence:FJT}.)

We think that \FJT is best understood as a (somewhat misleading) formulation of \STTd. To begin with, there is no obvious reason to resist this interpretation of \FJT. \citeauthor{LinneboRayo:HOI} had a clear technical reason for refusing to interpret \CTT via \STTu: the major selling point of \CTT was meant to be its ability to accommodate transfinite types (see \S\ref{optimism and union}). But, as we saw in \S\ref{s:CTTfjFinitary}, \FJT is just as limited to finite types as \STT. So \FJT, like \STT, cannot go beyond \emph{Finite Optimism}.  

Not only is there no reason for \citeauthor{FlorioJones:UQSTT} to resist the interpretation of \FJT as \STTd, there is good reason for them adopt it. Their \parencite*{FlorioJones:UQSTT} main aim is to argue that cumulative type theories can accommodate \emph{absolute generality}. However, as we will now show, \FJT can accommodate absolute generality iff it is taken as a mere notational variant of \STTd.

\subsection{\STT accommodates absolute generality}
\label{absolute generality:STT}

We start by explaining how \STT accommodates absolute generality.

In traditional set-theoretic semantics, domains are taken to be sets. In \STT, we can think of them as properties. For example, we can think of a domain of objects as a type 1 property, $d^1$, and we can say that $x^0$ is in that domain iff $d^1(x^0)$. As \textcite{Williamson:E} clearly explains, there is a real advantage to thinking of domains in this type-theoretic way. There is no set of all objects, and so if we think of domains as sets, unrestricted quantification over all objects is impossible. But \STT straightforwardly supplies a type $1$ property, $\bigdomain^1$, held by all objects, i.e.:\footnote{Via $\exists z^1 \forall x^0(z^1(x^0)\liff x^0=x^0)$, which is an instance of \STT-Comprehension and \CTT- and \FJT-Comprehension.}
    $$\forall x^0 \bigdomain^1(x^0)$$
(Nothing special is signified by our use of a capitalized `$\bigdomain$' here; it simply aids readability.)

Whilst $\bigdomain^1$ includes all the objects, one might worry that it is still \emph{restricted}, since it includes no type $1$ properties. But, in the context of \STT, this worry is toothless; no sense can be made of this idea in \STT. To regard $\bigdomain^1$ as \emph{restricted}, we would have to be able to make sense of the idea of a more inclusive domain, which contains both objects and properties.\footnote{We are not saying that there would have to \emph{be} a more inclusive domain, only that it would have to \emph{make sense} to say that there is. If $\phi$ make sense, then so must $\lnot \phi$.} But that is incoherent in \STT. To say `$d$ contains both objects and properties' is to say $\exists x^0\exists y^1(d(x^0) \land d(y^1))$, which is just ungrammatical in \STT. For $d(x^0)$ to be grammatical, $d$ must be type $1$; for $d(y^1)$ to be grammatical, $d$ must be type $2$; but every term has a unique type.

Suppose, then, we introduce suitably typed domains, $d^1$ and $d^2$. In \STT, these domains are \emph{incommensurable}, to use \citepossess[458]{Williamson:E} phrase. This does not mean that $d^1$ and $d^2$ have different members; it means that we cannot even \emph{express} the idea that they have the same (or different) members. We might put this by saying that, in \STT, we cannot articulate a univocal notion of \emph{Thing} or \emph{Entity} which applies to both objects and properties. (We can still talk about `type $0$ entities', `type $1$ entities', etc., but we cannot think of `entity' as a recurring categorematic component in these constructions.) So, if a first-order quantifier quantifies over all objects, then it quantifies over absolutely every \emph{thing} it makes sense to imagine that it might quantify over.

We can put the same point slightly differently by drawing on \citepossessalt[49]{FlorioJones:UQSTT} explication of \emph{unrestrictedness}: `an unrestricted domain is a domain such that true universal quantification over it precludes there from being absolutely any counterexamples whatsoever.'\footnote{This explication has an obvious shortcoming: it employs unrestricted quantification itself, in talking about `absolutely any counterexamples'. However, this shortcoming is shared by {every} account of unrestricted quantification. Moreover, anyone who already understands unrestricted quantification should agree with \citeauthor{FlorioJones:UQSTT}' explication.} This informal explication can be converted into a formal definition in \STT. To say that `everything melted' is {true over} the domain \emph{things in the freezer} is just to say that $\forall x(x\text{ is in the freezer}\lonlyif x\text{ melted})$. More generally, to say that $\forall x^{n-1} y^{n}(x^{n-1})$ is \emph{true over} domain $d^{n}$ is just to say $\forall x^{n-1}(d^{n}(x^{n-1}) \lonlyif y^{n}(x^{n-1}))$. To say that there are absolutely no counterexamples to this restricted generalization is to say that the generalization still holds good even when we lift the restriction, and return to $\forall x^{n-1} y^{n}(x^{n-1})$. And finally, to say that there are absolutely no counterexamples to \emph{any} true quantification over $d^{n}$ is just to generalize over all $y^{n}$. Assembling this, we obtain, for all $n > 0$: 
   \begin{listn-0}
       \item\label{def:STT:unrestricted} $d^{n}$ is unrestricted $\mliffdf\\\phantom{indent}\forall y^{n}(\forall x^{n-1}(d^{n}(x^{n-1})\lonlyif y^{n}(x^{n-1}))\lonlyif \forall x^{n-1}y^{n}(x^{n-1}))$
    \end{listn-0}
This definition is adequate because, in \STT, only generalizations of the form $\forall x^{n-1} y^{n}(x^{n-1})$ can be true over $d^{n}$. And $\bigdomain^1$, as introduced at the start of this subsection, is unrestricted according to \eqref{def:STT:unrestricted}: since $\forall x^0\bigdomain^1(x^0)$, if $\forall x^0(\bigdomain^1(x^0)\lonlyif y^1(x^0))$, it immediately follows that $\forall x^0 y^1(x^0)$. More generally, within \STT, it is obvious that $d^{n}$ is unrestricted iff $\forall x^{n-1} d^{n}(x^{n-1})$.

\subsection{Absolute generality in \FJT}
\label{absolute generality:CTTfj}
We have seen that \STT can accommodate absolute generality. So, if we read \FJT as a (misleadingly formulated) notational variant of \STTd, then \FJT can equally accommodate absolute generality. But, as we will now show, \FJT cannot accommodate absolute generality if it is taken at face-value.

To establish this, we will assume in what follows that \FJT is to be taken at face-value, so that $d^2(y^1)$ and $d^2(x^0)$ apply the very same type $2$ property to $y^1$ and $x^0$. (That assumption will remain in force until we explicitly lift it in \S\ref{s:CTTfj-summary}.) So understood, \FJT allows type $2$ properties to serve as domains containing both objects and type 1 properties. In fact, \FJT delivers a domain, $\bigdomain^2$, which contains all type $1$ properties and all objects, i.e.\ such that:\footnote{Via the \FJT-Comprehension instance: $\exists z^2(\forall x^1(z^2(x^1) \liff x^1=x^1)\land\forall x^0(z^2(x^0) \liff x^0 = x^0))$.}
$$\forall x^1\bigdomain^2(x^1) \land \forall x^0\bigdomain^2(x^0)$$
But now first-order quantification becomes a form of \emph{restricted} quantification: in a clear sense, $\bigdomain^1$ is a restriction of $\bigdomain^2$, since $\bigdomain^2$ contains everything in $\bigdomain^1$, and more besides.\footnote{Formally: $\forall x^0(\bigdomain^1(x^0)\lonlyif \bigdomain^2(x^0))$, but $\exists y^1(\bigdomain^2(y^1)\land \forall x^0(\bigdomain^1(x^0) \lonlyif x^0 \neqCTT y^1))$.}

The point here is that \FJT \emph{does} treat objects and type $1$ properties as a species of a single genus. Indeed, for each $n >0$, we can think of  \emph{Thing}$^{n}$ as the property $\bigdomain^{n}$ such that $\bigwedge_{m < n} \forall x^m\bigdomain^{n}(x^m)$. So in \FJT, it makes sense, \emph{and is true}, to say that first-order quantifiers quantify over some things but not others.\footnote{\textcite{Kramer:ETS} presents a very similar argument, but directed against \CTT rather than \FJT.}

Again, we can make the same point in terms of \citeauthor{FlorioJones:UQSTT}' idea that $d^n$ is unrestricted iff there are absolutely no counterexamples to any universal generalization which is true over $d^n$. Applied to \FJT, this does not quite yield a simple \emph{definition} of unrestrictedness,\footnote{This is because we can ask whether $d^n$ is $m$-unrestricted for any $m > 0$; see \eqref{def:CTTfj:m-unrestricted}, below.} but it does yield a schematic necessary condition for unrestrictedness: if $d^{n}$ is unrestricted, and $y^{m}$ is true of everything in $d^{n}$ that it can be meaningfully applied to, then $y^m$ is true of absolutely everything it can be meaningfully applied to. Formalizing this intuitive idea, we obtain the following, for all $m,n > 0$:
    \begin{listn}
        \item \label{def:CTTfj:unrestricted} $d^{n}$ is unrestricted $\lonlyif\\\phantom{indent}\forall y^{m}(\bigland_{i < \min(m, n)}\forall x^{i}(d^{n}(x^{i})\lonlyif y^{m}(x^{i}))\lonlyif \bigland_{i < m}\forall x^{i} y^{m}(x^{i}))$
    \end{listn}
This makes $\bigdomain^1$ {restricted}, since \FJT yields an $H^2$ which applies to every object but to no type $1$ property, i.e.\ such that:\footnote{Via the \FJT-Comprehension Instance: $\exists z^2(\forall x^0(z^2(x^0)\liff x^0 = x^0) \land \forall x^1(z^2(x^1)\liff x^1\neq x^1))$.}
    $$\forall x^1\lnot H^2(x^1) \land \forall x^0H^2(x^0)$$
Clearly $\forall x^0(\bigdomain^1(x^{0}) \lonlyif H^2(x^0))$, but $\lnot \forall x^1 H^2(x^1)$; so $\bigdomain^1$ is restricted by \eqref{def:CTTfj:unrestricted}.\footnote{This informal argument crucially assumes that \FJT is taken at face value. Take the idea that `$y^2$ is true of everything in $d^1$ that it can meaningfully be applied to' can be glossed in \FJT as $\forall x^0(d^1(x^0) \lonlyif y^2(x^0))$. Under interpretation into \STTd, this formula becomes $\forall x^0(d^1(x^0) \lonlyif \forall z^1(y^2 \typedownrel z^1 \lonlyif z^1(x^0)))$. This no longer says anything about whether $y^2$ itself is true of everything in $d^1$.} A similar argument shows that every domain of every type is restricted in \FJT.\footnote{Assuming that the type hierarchy does not have a terminal level.} (And the same style of argument shows that no domain is unrestricted in \CTT.)\footnote{When \CTT is taken at face value, and again assuming that the type hierarchy does not have a terminal level. In detail: first, we observe that if $d^{\beta+1}$ is unrestricted, then $\forall y^{\gamma+1}(\forall x^\alpha(d^{\beta+1}(x^{\alpha})\lonlyif y^{\gamma+1}(x^{\alpha}))\lonlyif \forall x^{\gamma}y^{\gamma+1}(x^{\gamma}))$, for all $\alpha \leq \min(\beta, \gamma)$. Via \CTT-Comprehension, we obtain an $H^{\beta+2}$ such that $\forall x^{\beta+1}(H^{\beta+2}(x^{\beta+1})\liff \exists x^\beta\phantom{(}x^{\beta+1} \eqCTT x^{\beta})$. Then any $d^{\beta+1}$ is restricted, since $\forall x^\beta(d^{\beta+1}(x^\beta) \lonlyif H^{\beta+2}(x^\beta))$ but $\lnot \forall x^{\beta+1} H^{\beta+2}(x^{\beta+1})$.}

\subsection{\citeauthor{FlorioJones:UQSTT} on (R=U)}
\label{absolute generality:relatively unrestricted}
Our argument that every domain is restricted in \FJT was based on \citeauthor{FlorioJones:UQSTT}' own explication of \emph{unrestrictedness}. But they thought that \FJT could accommodate absolute generality. In this subsection, we will lay out their reasoning, and explain why it was mistaken.

Alongside their explication of \emph{unrestrictedness}, \textcite[51]{FlorioJones:UQSTT} introduce a further notion: a domain is \emph{Russellian} for a generalization $\forall v Fv$ iff it coincides with the \emph{range of significance} of the predicate $F$, i.e.\ the range of things that $F$ can be meaningfully applied to. They then propose  \parencite*[51--3]{FlorioJones:UQSTT}:
\begin{listbullet}
    \item[(R=U)]A domain is Russellian iff it is unrestricted.
\end{listbullet}
Here is the idea behind (R=U): a counterexample to $\forall v Fv$ would be something of which $F$ is false; but $F$ does not say \emph{anything} (whether true or false) of the things which fall outside of its range of significance; so if $\forall v Fv$ is true over $d$, and $d$ is Russellian for $\forall v Fv$, then there cannot be any counterexamples to $\forall v Fv$; so $d$ is unrestricted for $\forall v Fv$.

\textcite[57--8]{FlorioJones:UQSTT} attempt to use (R=U) as follows. The domain $\bigdomain^1$ is Russellian for the generalization $\forall x^0 a^1(x^0)$: after all, type 1 terms express type 1 properties, and type 1 properties apply meaningfully only to objects.\footnote{This explains why \citeauthor{FlorioJones:UQSTT} abandoned \citeauthor{LinneboRayo:HOI}'s \CTT, in favour of a  theory which invalidates Type-Raising: in \CTT, every type of entity can be applied to every type of entity (using $\inCTT$ if necessary), and so the range of significance of $a^1$ includes all entities of all types.} So if we read $\forall x^0 a^1(x^0)$ as a quantification over $\bigdomain^1$, then by (R=U) it is \emph{unrestricted}. Whilst $\bigdomain^1$ is a strict sub-domain of $\bigdomain^2$, none of the extra entities in $\bigdomain^2$ fall within $a^1$'s range of significance.

Our basic problem with (R=U) is quite simple: there is a fundamental mismatch between the R and the U. \emph{Unrestrictedness} is normally understood in absolute terms: either a domain is absolutely unrestricted, or it is not. By contrast, \citeauthor{FlorioJones:UQSTT}'s notion of \emph{Russellianness} is a relative matter: a domain is not just Russellian \emph{full stop}; it is only ever Russellian \emph{for} a generalization $\forall v Fv$. This basic problem can be overcome in \STT, but not in \FJT.

In \STT, a property $d^{n}$ can (meaningfully) be a domain for, and only for, generalizations of the form $\forall x^{n-1} y^{n}(x^{n-1})$. After all, if we attempt to relativize the generalization $\forall y^{m}(x^i)$ to  $d^{n}$, obtaining $\forall x^i(d^{n}(x^i) \lonlyif y^{m}(x^i))$, then the result is grammatical in \STT iff $n = m = i+1$. 
Consequently, the relativity involved in \emph{Russellianness} can be safely ignored: it would not even make sense to say that $d^{n}$ is Russellian for $\forall x^{m-1} y^{m}(x^{m-1})$ when $n\neq m$. Indeed, since the range of significance of any type $n$ property in \STT is always exactly the type $n\mathord{-}1$ entities, we can say that $d^{n}$ is Russellian $\mliffdf$ $\forall x^{n-1} d^{n}(x^{n-1})$. Using \eqref{def:STT:unrestricted} from \S\ref{absolute generality:STT}, we can then prove (R=U) for \STT.

In \FJT, by contrast, a property $d^n$ can (meaningfully) be a domain for generalizations $\forall x^i y^m(x^i)$ with $n \neq m$, so we cannot simply ignore the relativity in \emph{Russellianess}. Let us, then, try to accommodate it. Officially, a domain is supposed to be Russellian for a \emph{generalization}. However, since the range of significance of any type $m$ property in \FJT is always exactly the type $k < m$ entities, all that really matters is the type of the predicate used in the generalization. This leads to an explicitly relativized notion of \emph{Russellianness} as follows:
\begin{listn}
    \item \label{def:CTTfj:m-russellian} $d^{n}$ is ${m}$-Russellian $\mliffdf$ 
    all and only the type $k < m$ entities have $d^{n}$
    \item[i.e.{}] 
    $\bigl(\bigland_{k < m}\forall y^k \biglor_{i < n} (\exists x^i \eqCTT y^k)d^{n}(x^i)\bigr) \land \bigl(\bigland_{i < n}\forall x^{i}(d^{n}(x^{i}) \lonlyif \biglor_{k < m}\exists y^{k}\ x^{i} \eqCTT y^{k})\bigr)$
\end{listn}
The first conjunct captures the idea that every type $k < m$ entity has $d^{n}$; it says that every type $k < m$ entity is an entity in $d^{n}$. The second conjunct captures the idea that only the type $k < m$ entities have $d^{n}$; it says that every (type $i < n$) entity in $d^{n}$ is a type $k < m$ entity.\footnote{\textcite[56fn.15]{FlorioJones:UQSTT} have some doubts about whether $\eqCTT$ expresses cross-type identity in \FJT. If these doubts are justified, then our formal definition of \emph{$m$-Russellian} will have to be revised as follows:
\begin{listn}
    \item[(\ref{def:CTTfj:m-russellian}$_*$)] $d^{n}$ is ${m}$-Russellian$_*$ $\mliffdf$ $\bigl(\bigland_{k<m}\forall y^kd^n(y^k)\bigr) \land \bigl(\bigland_{m\leq k < n}\forall y^k\lnot d^n(y^k)\bigr)$
\end{listn}
If $n < m$, then `$d^n$ is $m$-Russellian$_*$' is ill-formed rather than false. Nevertheless, our key points about \emph{Russellianness} still go through. First, Russellianness$_*$ is significantly relativized, since $H^2$ is $1$-Russellian$_*$ but not $2$-Russellian$_*$, with $H^2$ as given at the end of \S\ref{absolute generality:CTTfj}. Second, (R=U) is false, since $\bigdomain^2$ is $1$-unrestricted but not $1$-Russellian$_*$, with $\bigdomain^2$ as given in \S\ref{absolute generality:CTTfj}.} This definition allows us (meaningfully) to ask whether $d^{n}$ is $m$-Russellian, for any $n$ and $m$. Furthermore, if $n < m$, then $d^{n}$ is {not} $m$-Russellian. In particular, $U^1$ is not $2$-Russellian. However, $U^1$ {is} $1$-Russellian. So, in \FJT, \emph{Russellianness} is significantly relativized.

To make sense of (R=U) in \FJT, then, \citeauthor{FlorioJones:UQSTT} must relativize the notion of \emph{unrestrictedness}, so that it matches the relativity in \emph{Russellianness}. Tacitly, they do exactly this, describing domains as unrestricted \emph{for} certain generalizations \parencite*[e.g.\ 52--3]{FlorioJones:UQSTT}. \citeauthor{FlorioJones:UQSTT} do not define this relative sense of `unrestricted', but we can easily provide a definition on their behalf. To say that $d^{n}$ is unrestricted with regard to type $m$ is, presumably, to say this: if $y^{m}$ is true of everything in $d^{n}$ that it can be meaningfully applied to, then it is true of absolutely everything it can be meaningfully applied to. Formalizing this, we obtain the following, for all $m,n >0$:
    \begin{listn}
        \item \label{def:CTTfj:m-unrestricted} $d^{n}$ is $m\text{-unrestricted} \mliffdf \\\phantom{indent}\forall y^{m}(\bigland_{i < \min(m, n)}\forall x^{i}(d^{n}(x^{i})\lonlyif y^{m}(x^{i}))\lonlyif \bigland_{i < m}\forall x^{i} y^{m}(x^{i}))$
    \end{listn}
Indeed, this just turns \eqref{def:CTTfj:unrestricted}, which is a schematic necessary condition on unrelativized unrestrictedness, into a definition of relativized $m$-unrestrictedness.

We can now understand (R=U) thus: a domain is $m$-Russellian iff it is $m$-unrestricted. But so understood, (R=U) is false: $\bigdomain^2$ is $1$-unrestricted but not $1$-Russellian, with $\bigdomain^2$ as given in \S\ref{absolute generality:CTTfj}. Moreover, we do not need  any principle like (R=U) to determine whether a given domain is $m$-unrestricted; we can just use definition \eqref{def:CTTfj:m-unrestricted}. For example, it is clear from \eqref{def:CTTfj:m-unrestricted} that $\bigdomain^1$ is $1$-unrestricted but $2$-restricted. More generally, $d^{n}$ is $m$-unrestricted iff both $n \geq m$ and $\bigland_{i < m}\forall x^i d^{n}(x^i)$.

The only remaining question is whether the salient notion of \emph{unrestrictedness} in \FJT is the absolute notion governed by \eqref{def:CTTfj:unrestricted}, or the relative notion defined by \eqref{def:CTTfj:m-unrestricted}. We think it is completely clear that the relevant notion is the absolute one. After all, the debate here is about \emph{absolute generality}. It would be false advertising to enter that debate, promising to vindicate unrestricted quantification, and then only deliver \emph{relatively} unrestricted quantification. To emphasise this point, return to the example of $\bigdomain^1$: evidently, $\bigdomain^1$ is $1$-unrestricted but $2$-restricted, as defined by \eqref{def:CTTfj:m-unrestricted}. Precisely because $\bigdomain^1$ is $2$-restricted, though, there is a clear sense in which $\bigdomain^1$ is restricted \emph{simpliciter}. In particular, with $H^2$ as given at the end of \S\ref{absolute generality:CTTfj}, everything which is $\bigdomain^1$ is $H^2$, i.e.\ $\forall x^0(\bigdomain^1(x^0) \lonlyif H^2(x^0))$, but some entities are not $H^2$, in that $\exists x^1 \lnot H^2(x^1)$.

Indeed, this is exactly where \textcite[57]{FlorioJones:UQSTT} go wrong. They recognise that you can find a type $1$ entity not in $\bigdomain^1$, but say: `it does not entail that $F$ is meaningfully predicable of that entity', where $\forall x^0 F (x^0)$ is the generalization under consideration. However, \FJT has \emph{precisely} that entailment when $F$'s type is $> 1$, as in the case of $F = H^2$.

\subsection{\FJT: the case for ambiguity}
\label{s:CTTfj-summary}

It might be helpful to end our discussion of \FJT by summarizing our case for reading it as a mere notational variant of \STTd.

\emph{First.} We see no reason not to read \FJT in this way. \citeauthor{LinneboRayo:HOI} could not read \CTT[\omega] as a notational variant of \STTu, because they wanted to extend \CTT[\omega] into the transfinite. But \FJT is as finitary as \STTu.

\emph{Second.} If we take \FJT at face-value, then it is unclear how we should interpret it. \citeauthor{FlorioJones:UQSTT} explicitly intended to give \FJT a conceptual semantics, but we know of no version of that semantics which could justify \FJT's type-restrictions, taken at face-value.

\emph{Third.} If we take \FJT at face-value, then it cannot accommodate absolute generality. However, if we read \FJT as a notational variant of \STTd, then it can supply absolutely unrestricted domains. 

\section{Conclusion}
\label{conclusion}
In this paper, we have argued for four main claims:
\begin{listl-0}
	\item\label{conc-1} \CTT cannot be used to close the gap between an ideological hierarchy of types and an ontological hierarchy of sets (\S\S\ref{s:LR:interpreting}--\ref{s:DiscussingSetsFromTypesTheorem}).
	\item\label{conc-2} \CTT's type-restrictions are superfluous, on any semantics (\S\ref{CTT-unjustified}).
	\item\label{conc-3} \STT's type-restrictions can be justified by a Fregean semantics, which also provides us with a way to resist \citeauthor{LinneboRayo:HOI}'s {Semantic Argument} in favour of \CTT (\S\S\ref{STT-justified}--\ref{optimism and union}).
	\item\label{conc-4} \FJT is best understood as a misleading formulation of \STTd (\S\ref{type theories:CTTfj}).
\end{listl-0}
We start with \eqref{conc-1}. The Sets-from-Types Theorem allows us to simulate \Zr within \CTT. But deep mathematical differences remain between \Zr and and \Zrkappa, rendering \Zrkappa unsuitable as a framework for mathematical foundations. Furthermore, the Sets-from-Types Theorem cannot allay any ontological worries we might have about set theory: \CTT's type-indices are supplied externally, and so the Sets-from-Types Theorem merely shunts our ontological worries into the metalanguage.

Next is \eqref{conc-2}. \CTT is a remarkably relaxed type theory: it allows us to apply every type of entity to every type of entity. But it still retains the constraint that all of its variables are typed and, in \CTT, that type-restriction is superfluous. Once every type of entity can be applied to every type of entity, there can be no barrier to introducing untyped variables.

We come now to \eqref{conc-3}. The strict type-restrictions imposed by \STT can be justified by the Fregean semantics. On this semantics, different types of term play fundamentally different types of semantic role, so that they cannot be meaningfully intersubstituted. Moreover, this semantics yields a principled reason to reject \emph{Naïve Optimism}, a crucial premise in \citeauthor{LinneboRayo:HOI}'s Semantic Argument.

We end with \eqref{conc-4}. \citeauthor{FlorioJones:UQSTT}' \FJT was meant to be a \emph{partially} cumulative type theory, but we argue it is best understood as a notational variant of \STTd: taking \FJT at face-value leaves it unable to accommodate absolute generality; whereas \STTd---which is definitionally equivalent to \FJT---provides absolutely unrestricted domains of quantification.

\startappendix

\section{Elementary facts about CTT}\label{s:app:elementary}
The remainder of this paper comprises technical appendices, covering the formal results mentioned in the main text. We will start with some elementary observations about \CTT. As mentioned in \S\ref{type theories:CTTlr}, for each ordinal $\tau$, we have a theory \CTT[\tau].\footnote{Throughout the appendices, we will assume that all type-indices are ordinals; nothing turns on this, but it makes the technicalities more familiar.} Recall that we have explicitly defined $\eqCTT$ and $\inCTT$, 
for any types $\alpha$ and $\beta$ and where $\gamma = \max(\alpha, \beta)+1$:
\begin{align*}
	a^\alpha \eqCTT b^\beta & \mliffdf \forall x^{\gamma}(x^{\gamma}(a^\alpha) \liff x^\gamma(b^\beta))\\
    a^\alpha \inCTT b^\beta & \mliffdf (\exists x^{\gamma} \eqCTT b^\beta)x^{\gamma}(a^\alpha)
\end{align*}
In what follows, we will frequently invoke the following simple facts about $\eqCTT$ and $\inCTT$; 
crudely, they  allow us to move seamlessly between different type-levels:
\begin{lem}\label{fact:TR} If $\alpha \leq \beta$ and $\beta + 1 < \tau$, then \CTT[\tau] proves: $\forall a^\alpha \exists b^\beta\phantom{)}a^\alpha \eqCTT b^\beta$
\end{lem}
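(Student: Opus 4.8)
The plan is to prove the Type-Raising Scheme, $\forall x^\alpha \exists y^\beta\ x^\alpha \eqCTT y^\beta$, by induction on $\beta$ (with $\alpha$ fixed), the key tool being \CTT-Comprehension together with the cumulative quantifier rules \AllI{\beta}{\alpha} and \AllE{\beta}{\alpha}. The base case $\beta = \alpha$ is trivial, since $\eqCTT$ is reflexive: for any $a^\alpha$, we have $a^\alpha \eqCTT a^\alpha$ because $\forall x^{\alpha+1}(x^{\alpha+1}(a^\alpha) \liff x^{\alpha+1}(a^\alpha))$ is a logical truth, so we may take $y^\beta$ to be $a^\alpha$ itself. (A small wrinkle: when $\beta = \alpha$ the witness is literally $a^\alpha$, which is fine; the quantifier $\exists y^\alpha$ is introduced from the instance $a^\alpha$.)

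For the successor step, suppose the result holds for $\beta$ (with $\alpha \leq \beta$, $\beta+1 < \tau$), and take an arbitrary $a^\alpha$. By the induction hypothesis there is some $b^\beta$ with $a^\alpha \eqCTT b^\beta$. Now apply \CTT-Comprehension at type $\beta$ to the formula $\phi(x^\beta) := (x^\beta \eqCTT b^\beta)$ — which is well-formed and does not contain the fresh variable $z^{\beta+1}$ — to obtain a $z^{\beta+1}$ such that $\forall x^\beta(z^{\beta+1}(x^\beta) \liff x^\beta \eqCTT b^\beta)$. Intuitively $z^{\beta+1}$ is the "singleton" of $b^\beta$. The claim is then that $a^\alpha \eqCTT z^{\beta+1}$, i.e.\ (unfolding the definition with $\gamma = \beta+2$) that $\forall w^{\beta+2}(w^{\beta+2}(a^\alpha) \liff w^{\beta+2}(z^{\beta+1}))$. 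This is where the cumulative rules do the real work: $w^{\beta+2}(a^\alpha)$ is well-formed precisely because \CTT permits $\beta+2 > \alpha$, and one must show it has the same truth value as $w^{\beta+2}(z^{\beta+1})$ using the fact that $a^\alpha$, $b^\beta$ and $z^{\beta+1}$ are all "$\eqCTT$-equal" in the relevant sense, invoking the Leibniz-style indiscernibility property for $\eqCTT$ noted just before the statement (the property proved in Lemma \ref{fact:CTT}).

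For the limit step, let $\lambda \leq \beta$ — wait, rather: suppose $\beta = \lambda$ is a limit with $\lambda + 1 < \tau$ and $\alpha < \lambda$. Take $a^\alpha$. Pick any $\delta$ with $\alpha \leq \delta < \lambda$; by the induction hypothesis there is $b^\delta$ with $a^\alpha \eqCTT b^\delta$. Then use the successor machinery (or directly Comprehension at type $\delta$, forming the singleton of $b^\delta$ one level up, and noting $\delta + 1 \leq \lambda$) to push the witness up to a type-$\lambda$ entity; if $\delta + 1 < \lambda$ one iterates, but more cleanly: since $\lambda$ is a limit and $\delta + 1 \leq \lambda$, either $\delta+1 = \lambda$ (done) or $\delta + 1 < \lambda$ and we re-apply the induction hypothesis at $\delta + 1$ — so really the limit case reduces to the successor case plus the induction hypothesis, with no need for \emph{Limit$^\lambda$} at all. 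The witness at the end is a type-$\lambda$ entity $\eqCTT$-equal to $a^\alpha$, and then $\exists y^\lambda$-introduction finishes.

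The main obstacle I expect is the verification in the successor step that the constructed singleton $z^{\beta+1}$ really is $\eqCTT$-equal to $a^\alpha$ — i.e.\ establishing $\forall w^{\beta+2}(w^{\beta+2}(a^\alpha) \liff w^{\beta+2}(z^{\beta+1}))$. The left-to-right and right-to-left directions each require care: one must chain together (i) $a^\alpha \eqCTT b^\beta$, (ii) $b^\beta \eqCTT z^{\beta+1}$ (which itself needs unpacking, since $b^\beta$ and $z^{\beta+1}$ live at adjacent levels and $z^{\beta+1}$ was defined to hold exactly of the $x^\beta \eqCTT b^\beta$, so in particular $z^{\beta+1}(b^\beta)$), and (iii) transitivity/substitutivity of $\eqCTT$ across these levels, which is exactly the indiscernibility lemma. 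Getting the bookkeeping of type-indices right — checking at each step that the relevant applications $w^{\beta+2}(a^\alpha)$, $z^{\beta+1}(b^\beta)$, etc., satisfy \CTT's formation rules and that $\beta + 2 < \tau$ follows from the hypothesis $\beta + 1 < \tau$ in the successor case (this needs $\beta + 1$ to be a successor index below $\tau$, so $\beta + 2 \leq \tau$; one should check whether strict inequality $\beta + 2 < \tau$ is actually needed or whether the statement's hypothesis should be read as guaranteeing enough room) — is the fiddly part, but it is routine rather than deep.
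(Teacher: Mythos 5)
Your successor step contains a fatal error, and it is exactly at the point you flagged as the ``main obstacle''. Applying \CTT-Comprehension to $\phi(x^\beta) := x^\beta \eqCTT b^\beta$ does not produce a type-$(\beta\mathord{+}1)$ copy of $b^\beta$; it produces the \emph{singleton} of $b^\beta$, an entity which applies to exactly the things $\eqCTT$-equal to $b^\beta$. The link (ii) in your chain, $b^\beta \eqCTT z^{\beta+1}$, is not derivable --- indeed it is refutable in models of \CTT: in the class semantics of \S\ref{s:illustrationclass} your $z^{\beta+1}$ is (essentially) $\{b\}$, whereas the entity $\eqCTT$-equal to $b$ at level $\beta\mathord{+}1$ is $b$ itself, and $\{b\} \neq b$; a type-$(\beta\mathord{+}2)$ property separating them exists by Comprehension. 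So no amount of care with the indiscernibility Lemma \ref{fact:CTT} will close the gap: Comprehension always moves you to a \emph{new} entity one level up (a set-like collection), never to ``the same entity at a higher type''. Cumulation is not delivered by Comprehension at all; it is built into the quantifier rules. A symptom of the misdirection is your worry about needing $\beta+2 < \tau$: the lemma as stated assumes only $\beta+1 < \tau$, which suffices for the correct proof but not for your construction.

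The paper's proof is a two-line argument with no induction, no Comprehension, and no Limit-rule: the witness for $\exists y^\beta$ is $a^\alpha$ \emph{itself}, supplied by the cumulative existential-introduction rule. Concretely: by \AllI{\beta+1}{\beta+1} one has the tautology $\forall x^{\beta+1}(x^{\beta+1}(a^\alpha) \liff x^{\beta+1}(a^\alpha))$, which is precisely the matrix of $a^\alpha \eqCTT y^\beta$ (unfolded at level $\beta\mathord{+}1$, since $\max(\alpha,\beta)+1 = \beta+1$) with $y^\beta$ instantiated by $a^\alpha$; note $x^{\beta+1}(a^\alpha)$ is well-formed because $\beta+1 > \alpha$. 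Then \ExistsI{\beta}{\alpha} --- the dual of \AllE{\beta}{\alpha}, which allows a type-$\alpha$ term to witness a type-$\beta$ existential when $\beta \geq \alpha$ --- yields $\exists y^\beta\ a^\alpha \eqCTT y^\beta$, and \AllI{\alpha}{\alpha} finishes. Your base case is in effect this argument restricted to $\beta = \alpha$; the point you missed is that the cumulative rules let the same tautological move work for every $\beta \geq \alpha$ directly, making the induction (and the confused limit case) unnecessary.
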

\begin{proof}
    By \AllI{\beta+1}{\beta+1}, we have $\forall x^{\beta+1}(x^{\beta+1}(a^\alpha) \liff x^{\beta+1}(a^\alpha))$, i.e.\ $a^\alpha \eqCTT a^\alpha$; so $\forall a^\alpha \exists b^\beta\  a^\alpha \eqCTT b^\beta$ by \ExistsI{\beta}{\alpha} followed by \AllI{\alpha}{\alpha}
\end{proof}
\begin{lem}\label{fact:CTT} For any $\phi$, and any $\alpha, \beta, \gamma$ with $\max(\alpha, \beta, \gamma) + 2 < \tau$, \CTT[\tau] proves: 
	\begin{listn-0}
		\item\label{eqCTT:E} if $a^\alpha \eqCTT b^\beta$ and $\phi(a^\alpha)$, then $\phi(b^\beta)$, when this is well-formed
		\item\label{eqCTT:equiv}
		if $a^\alpha \eqCTT b^\beta \eqCTT c^\gamma$, then $a^\alpha \eqCTT c^\gamma$
		\item\label{inCTT:in} if $a^\alpha \eqCTT b^\beta $ and $a^\alpha \inCTT c^\gamma$, then $b^\beta \inCTT c^\gamma$
		\item\label{inCTT:ni} if $a^\alpha \eqCTT b^\beta$ and $c^\gamma \inCTT a^\alpha$, then $c^\gamma \inCTT b^\beta$
	\end{listn-0}
\end{lem}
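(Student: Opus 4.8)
The plan is to establish \eqref{eqCTT:E} first, by induction on the complexity of $\phi$, and then to read off \eqref{eqCTT:equiv}--\eqref{inCTT:ni} from it, using Lemma \ref{fact:TR} to shuttle witnesses between type-levels and the (manifest) symmetry of $\eqCTT$. In the induction for \eqref{eqCTT:E}, the connective and quantifier cases are routine: $a^\alpha$ is a constant, so it is never captured; well-formedness of $\phi(b^\beta)$ passes to subformulas; and the symmetry of $\eqCTT$ supplies the reverse direction needed for negation. All the content is in the atomic case, where $\phi$ is $P(Q)$ or $P = Q$ for terms $P,Q$. If the replaced occurrence of $a^\alpha$ flanks an identity, well-formedness of $\phi(b^\beta)$ forces $\beta = \alpha$, so $a^\alpha \eqCTT b^\beta$ is literally the strict identity $a^\alpha = b^\alpha$, and the claim is ordinary indiscernibility of identicals. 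If $a^\alpha$ occurs in argument position, so $\phi$ is $c^{\alpha+1}(a^\alpha)$ and $\phi(b^\beta)$ is $c^{\alpha+1}(b^\beta)$, then well-formedness forces $\beta \leq \alpha$, hence $\max(\alpha,\beta)+1 = \alpha+1$, and instantiating the definiens of $a^\alpha \eqCTT b^\beta$ (namely $\forall x^{\alpha+1}(x^{\alpha+1}(a^\alpha) \liff x^{\alpha+1}(b^\beta))$) at $c^{\alpha+1}$ gives the result outright.

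The one genuinely non-trivial atomic case --- and the main obstacle in proving \eqref{eqCTT:E} --- is when $a^\alpha$ occurs in predicate position, so $\phi$ is $a^\alpha(d^\delta)$ with $\alpha = \delta+1$ and $\phi(b^\beta)$ is $b^\beta(d^\delta)$, which is well-formed only if $\beta > \delta$, i.e.\ $\beta \geq \alpha$; here $\max(\alpha,\beta)+1 = \beta+1$. My plan is to use \CTT-Comprehension to form a type $\beta+1$ property $h$ with $\forall z^\beta(h(z^\beta) \liff z^\beta(d^\delta))$ (legitimate, since $\delta < \alpha \leq \beta$). Instantiating this scheme at $a^\alpha$ and at $b^\beta$ (both of type $\leq \beta$) yields $h(a^\alpha) \liff a^\alpha(d^\delta)$ and $h(b^\beta) \liff b^\beta(d^\delta)$, while instantiating the definiens of $a^\alpha \eqCTT b^\beta$ at $h$ yields $h(a^\alpha) \liff h(b^\beta)$; chaining the three biconditionals gives $a^\alpha(d^\delta) \liff b^\beta(d^\delta)$. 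This step needs $\beta+1 < \tau$, which is covered by the hypothesis $\max(\alpha,\beta,\gamma)+2 < \tau$. (Multiple occurrences of $a^\alpha$ are handled by substituting them one at a time.)

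With \eqref{eqCTT:E} in hand, the rest is routine but fiddly type-bookkeeping, the recurring nuisance being that $\eqCTT$ and $\inCTT$ carry quantifier-types depending on the types of \emph{both} flanking terms, so these must be uniformized before anything can be chained. For \eqref{eqCTT:equiv}, I would first lift the two hypotheses $a^\alpha \eqCTT b^\beta$ and $b^\beta \eqCTT c^\gamma$ to the common type $\delta = \max(\alpha,\beta,\gamma)+1$ --- given any property $x^\delta$, apply \eqref{eqCTT:E} to transfer $x^\delta(a^\alpha) \liff x^\delta(b^\beta)$ and $x^\delta(b^\beta) \liff x^\delta(c^\gamma)$ --- chain to get $\forall x^\delta(x^\delta(a^\alpha) \liff x^\delta(c^\gamma))$, and then descend to the type $\max(\alpha,\gamma)+1$ demanded by the definition of $a^\alpha \eqCTT c^\gamma$: given a property $x$ of that type, use Lemma \ref{fact:TR} for a type $\delta$ surrogate $x'$ with $x \eqCTT x'$, and apply \eqref{eqCTT:E} to transfer along $x \eqCTT x'$ in argument position. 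For \eqref{inCTT:in} and \eqref{inCTT:ni}, unfold $\inCTT$ to a witness $e$ satisfying an $\eqCTT$-claim and an application-claim; push the $\eqCTT$-claim along using \eqref{eqCTT:equiv} and the application-claim along using \eqref{eqCTT:E}, and finally use Lemma \ref{fact:TR} (with the symmetry of $\eqCTT$) to adjust the witness's type to the one demanded by the target $\inCTT$-claim. Throughout, the only thing to police is that every appeal to \eqref{eqCTT:E} is made at a \emph{well-formed} instance, which is precisely what the two levels of slack in the hypothesis buy us.
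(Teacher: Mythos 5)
Your proof of part \eqref{eqCTT:E} has a genuine gap: the atomic case analysis presupposes \STT-style typing where \CTT's relaxed typing is exactly what matters. In \CTT, $c^\gamma(a^\alpha)$ is well-formed whenever $\gamma > \alpha$, not only when $\gamma = \alpha+1$; so in the argument-position case the predicate over the slot may have any type $\gamma > \max(\alpha,\beta)$, and in the predicate-position case the argument under the slot may have any type $\delta < \min(\alpha,\beta)$. Your two key moves fail precisely in the extra cases this allows. First, argument position with $\gamma > \max(\alpha,\beta)+1$ (e.g.\ $c^5(a^0)$ with $b^1$): the definiens of $a^\alpha \eqCTT b^\beta$ quantifies over type $\max(\alpha,\beta)+1$, and the cumulative elimination rule only instantiates \emph{downward}, so you cannot instantiate it at $c^\gamma$; your tacit assumption that the predicate has type exactly $\alpha+1$ (whence ``well-formedness forces $\beta\leq\alpha$'') simply does not hold in general. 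Second, predicate position: your inference to $\beta \geq \alpha$ rests on the unwarranted assumption $\alpha = \delta+1$; when $\delta < \beta < \alpha$ (e.g.\ $\phi$ is $a^5(d^0)$ and $b^\beta$ is $b^2$), your $h^{\beta+1}$ with $\forall z^\beta(h(z^\beta)\liff z^\beta(d^\delta))$ can neither be instantiated at $a^\alpha$ nor meaningfully applied to $a^\alpha$, and instantiating the definiens (whose quantifier has type $\alpha+1$) at $h^{\beta+1}$ would produce the ill-formed $h^{\beta+1}(a^\alpha)$.

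The repair in both cases is to run the comprehension at level $\max(\alpha,\beta)$ rather than at $\beta$ or not at all --- and once you see that, the induction on $\phi$ is unnecessary. This is the paper's proof: with $\delta = \max(\alpha,\beta)$, apply \CTT-Comprehension to the whole formula to get $c^{\delta+1}$ with $\forall x^\delta(c^{\delta+1}(x^\delta)\liff\phi(x^\delta))$ (note that $\phi(x^\delta)$ is well-formed because $\phi(a^\alpha)$ and $\phi(b^\beta)$ both are); instantiate downward at $a^\alpha$ to get $c^{\delta+1}(a^\alpha)$; instantiate the definiens of $a^\alpha\eqCTT b^\beta$ at $c^{\delta+1}$, which has exactly the quantified type $\delta+1$, to get $c^{\delta+1}(b^\beta)$; and instantiate downward at $b^\beta$ to recover $\phi(b^\beta)$ --- no case analysis at all. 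Your sketches of \eqref{eqCTT:equiv}--\eqref{inCTT:ni}, which the paper leaves to the reader with a warning about the typical ambiguity of $\eqCTT$ and $\inCTT$, are broadly workable once \eqref{eqCTT:E} is fixed, though the ``descent'' step in \eqref{eqCTT:equiv} needs no surrogates: from $\forall x^\delta(x^\delta(a^\alpha)\liff x^\delta(c^\gamma))$ with $\delta \geq \max(\alpha,\gamma)+1$, the cumulative quantifier rules yield $\forall x^{\max(\alpha,\gamma)+1}(x^{\max(\alpha,\gamma)+1}(a^\alpha)\liff x^{\max(\alpha,\gamma)+1}(c^\gamma))$ directly.
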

\begin{proof}
    \emphref{eqCTT:E} Suppose $a^\alpha \eqCTT b^\beta$ and $\phi(a^\alpha)$. Let $\delta = \max(\alpha, \beta)$; by \CTT-Comprehension there is some $c^{\delta+1}$ such that $\forall x^{\delta}(c^{\delta+1}(x^{\delta}) \liff \phi(x^{\delta}))$. Since $\phi(a^\alpha)$, by \AllE{\delta}{\alpha} we have that $c^{\delta+1}(a^\alpha)$. Since $a^\alpha \eqCTT b^\beta$, i.e.\ $\forall z^{\delta+1}(z^{\delta+1}(a^\alpha) \liff z^{\delta+1}(b^\beta))$, by \AllE{\delta+1}{\delta+1} we have that $c^{\delta+1}(b^\beta)$. Now $\phi(b^\beta)$ by \AllE{\delta}{\beta}. 
    
    \emph{(\ref{eqCTT:equiv})--}\emphref{inCTT:ni} We leave these to the reader. They are not completely immediate consequences of \eqref{eqCTT:E}, since the definitions of $\eqCTT$ and $\inCTT$ are typically ambiguous.
\end{proof}
\begin{lem}\label{lem:Foundation}
    If $\max(\alpha, \beta) + 2 < \tau$, then \CTT[\tau] proves: 
    $a^\alpha \inCTT b^{\beta+1} \liff (\exists x^\beta \eqCTT a^\alpha) b^{\beta+1}(x^\beta)$   
\end{lem}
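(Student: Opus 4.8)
The plan is to treat the two directions of the biconditional separately, after unpacking the abbreviations and then moving freely between type-levels using the substitutivity properties in Lemma~\ref{fact:CTT} together with Type-Raising (Lemma~\ref{fact:TR}). Write $\gamma = \max(\alpha,\beta+1)+1$, so that $a^\alpha \inCTT b^{\beta+1}$ abbreviates $\exists x^\gamma(x^\gamma \eqCTT b^{\beta+1} \land x^\gamma(a^\alpha))$, while $(\exists x^\beta \eqCTT a^\alpha)b^{\beta+1}(x^\beta)$ abbreviates $\exists x^\beta(x^\beta \eqCTT a^\alpha \land b^{\beta+1}(x^\beta))$. The structural fact driving everything is that $\gamma$ strictly exceeds both $\alpha$ and $\beta$, and $\beta+1$ strictly exceeds $\beta$; this is what keeps all the atomic formulas obtained below well-formed.

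For the left-to-right direction, I would assume $a^\alpha \inCTT b^{\beta+1}$ and fix a witness $c^\gamma$ with $c^\gamma \eqCTT b^{\beta+1}$ and $c^\gamma(a^\alpha)$. The one step that genuinely uses the founding axioms is to apply Type-Founded to obtain some $d^\beta$ with $a^\alpha \eqCTT d^\beta$: without it, when $\alpha > \beta$ there could be a ``rogue'' high-type $c$ making the left side true even though $a^\alpha$ has no type-$\beta$ counterpart, so the biconditional would fail. (When $\alpha \le \beta$ one could instead appeal to Type-Raising, but Type-Founded covers both cases at once.) Then from $c^\gamma(a^\alpha)$ and $a^\alpha \eqCTT d^\beta$, clause~\eqref{eqCTT:E} of Lemma~\ref{fact:CTT} gives $c^\gamma(d^\beta)$; reading $c^\gamma(d^\beta)$ as $\phi(c^\gamma)$ with $\phi(z) \coloneq z(d^\beta)$ and using $c^\gamma \eqCTT b^{\beta+1}$, the same clause gives $b^{\beta+1}(d^\beta)$. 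Since also $d^\beta \eqCTT a^\alpha$, this $d^\beta$ witnesses $(\exists x^\beta \eqCTT a^\alpha)b^{\beta+1}(x^\beta)$.

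For the right-to-left direction, I would assume there is $d^\beta$ with $d^\beta \eqCTT a^\alpha$ and $b^{\beta+1}(d^\beta)$, and produce the required type-$\gamma$ witness by raising $b^{\beta+1}$: Lemma~\ref{fact:TR} applies since $\beta+1 \le \gamma$, yielding $e^\gamma$ with $b^{\beta+1} \eqCTT e^\gamma$. Then $b^{\beta+1}(d^\beta)$ and $b^{\beta+1} \eqCTT e^\gamma$ give $e^\gamma(d^\beta)$ by clause~\eqref{eqCTT:E} of Lemma~\ref{fact:CTT} (again reading the predication $z(d^\beta)$ as the relevant $\phi$), and $e^\gamma(d^\beta)$ together with $d^\beta \eqCTT a^\alpha$ gives $e^\gamma(a^\alpha)$. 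So $e^\gamma \eqCTT b^{\beta+1}$ and $e^\gamma(a^\alpha)$, which is exactly $a^\alpha \inCTT b^{\beta+1}$. No appeal to Type-Founded is needed here: raising a predicate upward is always available, whereas the forward direction's need to descend to type $\beta$ is precisely what forces Type-Founded into play.

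The main obstacle I anticipate is not conceptual but bookkeeping: because $\eqCTT$ and $\inCTT$ are typically ambiguous, at each substitution one must check both that the resulting atomic formula is well-formed (secured by $\gamma > \alpha,\beta$ and $\beta+1 > \beta$) and that all type-indices involved stay below $\tau$ (the role of the hypothesis on $\tau$). Beyond that, the only non-mechanical move is recognising that Type-Founded, rather than bare Type-Raising, is the tool needed to handle the left-to-right direction uniformly.
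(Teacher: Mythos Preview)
Your proposal is correct and takes essentially the same approach as the paper, which gives only a one-line proof (``By Type-Founded and Lemmas \ref{fact:TR}--\ref{fact:CTT}''); you have faithfully unpacked exactly those ingredients, correctly identifying that Type-Founded is needed only for the left-to-right direction while Type-Raising (Lemma~\ref{fact:TR}) and the substitutivity clause of Lemma~\ref{fact:CTT} handle the rest.
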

\begin{proof}
    By Type-Founded and Lemmas \ref{fact:TR}--\ref{fact:CTT}.
\end{proof}\noindent
It is worth emphasising that Type-Founded and Type-Basis are independent from \CTT's other axioms. To show this, we begin by building an ill-founded set-theoretic structure, $\model{A}$. Let $\textbf{a} = \{\emptyset, \textbf{a}\}$; now define:
    \begin{align*}
        A_1 &\coloneq \textbf{a} & 
        A_{n+1} &\coloneq \powerset(A_n) & 
        A & \coloneq \bigcup_{n < \omega} A_n
    \end{align*}
So $A_2 = \{\emptyset, \{\emptyset\}, \{\textbf{a}\}, \textbf{a}\}$. Let $\model{A}$ be the structure whose domain is $A$ and which interprets $\in$ verbatim; evidently, $\model{A}$ is ill-founded. Using a slight tweak of the class semantics of \S\ref{s:illustrationclass}, we now create a model, $\model{M}$, of \CTT[\omega] without Type-Founded. We start by defining a ranking function $\rho : A \functionto \mathbb{N}$ on $\model{A}$ as follows: 
    \begin{align*}
        \rho(\emptyset) &= 0 & 
        \rho(\textbf{a}) &= 1 & 
        \rho(c) = n\text{ iff }c \in A_{n} \setminus A_{n-1}
    \end{align*}
So $\rho(\{\emptyset\}) = 2$. Now we stipulate that $\model{M}$'s type $n$ entities are all those $c \in A$ such that $\rho(c) \leq n$, and applications are stipulated to hold as follows, for all $m < n$ and all $b, c \in A$:
    \begin{align*}
        \model{M} \models c^n(b^m)\text{ iff }b \in c
    \end{align*}
It is easy to confirm that $\model{M}$ models \CTT[\omega] without  Type-Founded. But, by construction, $c^m \eqCTT c^n$ whenever $\min(m, n) \geq \rho(c)$. So $\textbf{a}^2(\textbf{a}^1)$ with $\textbf{a}^2 \eqCTT \textbf{a}^1$, and hence $\textbf{a}^1 \inCTT \textbf{a}^1$. So $\model{M}$ violates Type-Founded. Admittedly, Type-Basis holds in $\model{M}$, but we can violate it with a similar construction: start with a Quine atom $\textbf{b} = \{\textbf{b}\}$; let $B_0 = \textbf{b}$ and $B_{n+1} = \powerset(B_n)$; define $\rho(c) = n$ iff $c \in B_n \setminus B_{n-1}$; and note that $\textbf{b}^0 \inCTT \textbf{b}^0$.

\section{Obtaining \Zr in \PCTT}\label{s:PCTT:appendix}
In \S\ref{s:LR:interpreting}, we stated the Sets-from-Types Theorem. In this appendix, we prove that result. We also introduce the interpreting theory, \PCTT, and the interpreted theory, \Zr, and discuss how \PCTT deals with Replacement.

\subsection{The theory \Zr}\label{s:formulatingZr}
The set theory which we simulate is \Zr. We can think of \Zr as arising by adding to \Zermelo the principle that the sets are arranged in well-ordered levels; \Zr is therefore strictly stronger than \Zermelo\ and strictly weaker than \ZF.\footnote{\Zr is equivalent to \citepossess{Potter:STP} theory \textbf{Z}; this is strictly stronger than Zermelo's \Zermelo.} We follow \citepossess{Button:LT1}'s formulation of \Zr, starting with a core of definitions:
\begin{define}\label{def:widen}\label{def:history}\label{def:level}
	Say that $h$ is a \emph{history}, written $\histpred(h)$, iff $(\forall a \in h)\forall x(x \in a \liff (\exists c \in h)x \subseteq c \in a)$. Say that $s$ is a \emph{level}, written $\levpred(s)$, iff $\exists h(\histpred(h) \land \forall x(x \in s \liff \exists c(x \subseteq c \in h)))$.\footnote{\emph{Notation}: we let `$x \subseteq c \in h$' abbreviate `$(x \subseteq c \land c \in h)$'; similarly for other infix predicates.}
\end{define}\noindent
Using these definitions, we can consider some axioms:
\begin{listaxiom}
	\labitem{Extensionality}{ext} $\forall a \forall b (\forall x(x \in a \liff x \in b) \lonlyif a =b)$
	\labitem{Separation}{sep} $\forall a \exists b \forall x(x \in b \liff (\phi(x) \land x \in a))$, for every $\phi$ not containing $b$
	\labitem{Stratification}{lt:strat} $\forall a (\exists s \supseteq a)\levpred(s)$
	\labitem{Endless}{lt:cre} $\forall a \exists b\ a \in b$
	\labitem{Infinity}{lt:inf} $\exists a(\exists x\ x \in a \land (\forall x \in a)\exists y(x \in y \in a))$
\end{listaxiom}
The theory \LT has, as axioms, \ref{ext}, all instances of \ref{sep}, and \ref{lt:strat}, which serves as a principle of foundation. The theory \Zr adds \ref{lt:cre} and \ref{lt:inf} to \LT. In what follows, these next two results will be extremely useful:\footnote{See \textcite[\S3]{Button:LT1} for proofs.}
\begin{lem}\label{lem:es:acc} \ref{ext} + \ref{sep} proves: 
    if $\levpred(s)$, then $s = \Setabs{x}{\exists r(\levpred(r) \land x \subseteq r \in s)}$.
\end{lem}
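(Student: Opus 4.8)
The plan is to prove the two inclusions separately; one of them is essentially immediate, and the other reduces to the basic structure theory of levels.

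\emph{The inclusion from right to left.} Suppose $x \subseteq r$ and $r \in s$ (the hypothesis $\levpred(r)$ is not actually needed here). Unfold $\levpred(s)$ to a history $h$ with $\forall y(y \in s \liff \exists d(y \subseteq d \in h))$. Since $r \in s$, there is some $c \in h$ with $r \subseteq c$; then $x \subseteq r \subseteq c \in h$, so $x \in s$.

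\emph{The inclusion from left to right.} Suppose $x \in s$, and again fix a history $h$ for $s$ together with some $c \in h$ such that $x \subseteq c$. Since $c \subseteq c \in h$ we have $c \in s$, so it suffices to show $\levpred(c)$: then $r := c$ witnesses that $x$ belongs to the set on the right-hand side. Put $h_c := \{d \in h : d \in c\}$ (which exists by \ref{sep}). The defining clause of $\histpred(h)$, instantiated at $c$, reads $\forall y(y \in c \liff \exists d(d \in h \land y \subseteq d \in c))$, which is exactly $\forall y(y \in c \liff (\exists d \in h_c)\, y \subseteq d)$; so $c$ is precisely the accumulation of $h_c$, and $\levpred(c)$ follows as soon as we know $\histpred(h_c)$.

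\emph{Where the work lies.} Checking $\histpred(h_c)$ reduces, after instantiating the defining clause of $\histpred(h)$ at each $a \in h_c$, to the following transitivity fact: \emph{if $a,d \in h$ with $d \in a$ and $a \in c$, then $d \in c$} --- this is what lets us promote the witness $d \in h$ supplied by the clause for $a$ to a member of $h_c$. This fact is the crux, and it resists a direct derivation from the history clauses: the obvious attempt regresses, because unpacking $d \in a$ and $a \in c$ merely produces fresh instances of the same shape. The right approach is to first develop the elementary theory of levels from \ref{ext} and \ref{sep}: that levels are transitive (if $\levpred(s)$ and $x \in s$ then $x \subseteq s$, which is a single application of the history clause to the witness $c \in h$ with $x \subseteq c$), that any two histories are comparable under $\in$, and hence that every member of a history is itself a level. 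This is precisely the groundwork carried out in \textcite[\S3]{Button:LT1}; granted `every member of a history is a level', we get $\levpred(c)$ at once (since $c \in h$), and the lemma follows.
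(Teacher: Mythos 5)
The paper itself offers no proof of this lemma: it is stated (alongside Theorem \ref{thm:es:wellorder}) with a footnote deferring entirely to \textcite[\S3]{Button:LT1}, so there is no internal argument for you to match or diverge from. Measured against that, your proposal is in good shape and in fact does more than the paper does. The right-to-left inclusion is correct (and you are right that the hypothesis $\levpred(r)$ is idle there), as is the reduction of the left-to-right inclusion to the fact that every member of a history is a level; that fact is genuinely part of what \textcite[\S3]{Button:LT1} establishes from \ref{ext} and \ref{sep} alone, so resting on it is consonant with the paper's own citation. Your diagnosis of where the work lies is also accurate: to see that $h \cap c$ (your $h_c$) is a history one needs $a \cap h \subseteq c$ for each $a \in h \cap c$, and unwinding the history clause merely reproduces the same configuration with a new witness, so no direct juggling of the clauses will close it; the cited development breaks the regress with a minimality/well-ordering argument for levels, which is the real content of that section and is not reproduced in your sketch. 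So, as a self-contained derivation the crux remains outsourced---but it is outsourced to exactly the result, and the source, on which the paper itself relies.

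One correction to your gloss on that background theory: it is the \emph{levels} that are comparable under $\in$ (Theorem \ref{thm:es:wellorder}), not the histories. Distinct histories need not be $\in$-comparable: in the pure hierarchy, the set of levels in $V_3$ and the set of levels in $V_4$ are both histories, and neither is a member of the other. Nothing in your actual reduction leans on the mis-stated claim, but as written it is not something you could import from \textcite[\S3]{Button:LT1}.
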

\begin{thm}\label{thm:es:wellorder} \ref{ext} + \ref{sep} proves: the levels are well-ordered by $\in$, i.e.:
	\begin{listn-0}
		\item $\exists s(\levpred(s) \land \phi(s)) \lonlyif \exists s(\levpred(s) \land \phi(s) \land \lnot (\exists r \in s)(\levpred(r) \land \phi(r)))$
		\item $\forall s\forall t((\levpred(s) \land \levpred(t)) \lonlyif (s \in t \lor s = t \lor t \in s))$
	\end{listn-0}
\end{thm}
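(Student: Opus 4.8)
The plan is to run the standard Scott--Potter-style development of levels, entirely within \ref{ext}~+~\ref{sep} and leaning on Lemma~\ref{lem:es:acc}. Write $\mathrm{acc}(h)$ for $\Setabs{x}{(\exists c \in h)\,x \subseteq c}$, so that the levels are exactly the sets $\mathrm{acc}(h)$ for histories $h$, and Lemma~\ref{lem:es:acc} says that every level $s$ equals $\mathrm{acc}(L_s)$, where $L_s := \Setabs{x \in s}{\levpred(x)}$ (a set, by \ref{sep}). First I would collect the routine closure facts: (i) levels are \emph{supertransitive}, i.e.\ $x \subseteq y \in s$ and $\levpred(s)$ imply $x \in s$ (immediate from $s = \mathrm{acc}(h)$); (ii) if $h$ is a history and $a \in h$ then $\Setabs{c \in h}{c \in a}$ is a history with accumulation $a$, so members of histories are levels, and hence (with Lemma~\ref{lem:es:acc}) $q \subseteq r \in s$ implies $q \in s$ for any levels $q,r,s$; (iii) $\emptyset$ is a level, and no level belongs to itself (from Lemma~\ref{lem:es:acc}, $s \in s$ yields a level $r$ with $s \subseteq r \in s$, hence $r \in r$, which one pushes to a contradiction).

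The crux is the \emph{comparability lemma}: for any levels $s$ and $t$, either $s \subseteq t$ or $t \in s$ (so in particular levels are $\subseteq$-comparable). I would prove this by the analogue of the classical proof that ordinals are comparable, repeatedly using Lemma~\ref{lem:es:acc} to trade `$x \in s$' for `$x$ is a subset of some level in $s$' and back, and using \ref{sep} to form auxiliary sets such as $s \cap t$ (which one checks is again a level). Granting comparability in this strong form, clause~(2) of the theorem is immediate: applying it also with $s$ and $t$ interchanged, either both inclusions hold and $s = t$ by \ref{ext}, or the relevant `$\ldots$ or $\ldots \in \ldots$' disjunct fires and gives $s \in t$ or $t \in s$.

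For clause~(1): given $\phi$ true of some level $s_0$, if no level $\in s_0$ satisfies $\phi$ then $s_0$ is $\in$-minimal and we are done; otherwise $A := \Setabs{r \in s_0}{\levpred(r) \land \phi(r)}$ is a non-empty \emph{set} of levels, linearly ordered by $\in$ by comparability, so it suffices to show that any non-empty set of levels has an $\in$-least member. This again rides on the same machinery: one shows $\bigcap A$ is a level, that it is $\subseteq$-included in every member of $A$, and --- via supertransitivity and Lemma~\ref{lem:es:acc} --- that it actually belongs to $A$; it is then the $\in$-least level below $s_0$ satisfying $\phi$. (Note that \ref{lt:strat} is not needed for any of this.)

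The main obstacle is the comparability lemma, and more precisely the fact that comparability and the `non-empty set of levels has an $\in$-least member' claim cannot simply be proved in sequence: the naive argument for each quietly presupposes the other (or presupposes $\in$-well-foundedness of levels outright). The standard remedy, which I would adopt, is to fold comparability and well-foundedness into a single statement proved by one carefully arranged $\in$-induction on levels --- bootstrapped from the fact that no level is self-membered --- or, following the streamlined treatment of \textcite{Button:LT1}, to drive the whole argument directly through Lemma~\ref{lem:es:acc}. Everything else is routine bookkeeping with \ref{ext} and \ref{sep}.
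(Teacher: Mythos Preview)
The paper gives no proof of this theorem at all; it simply cites \textcite[\S3]{Button:LT1}. Your plan is explicitly to follow that same source, and your sketch --- supertransitivity of levels, members of histories are levels, Lemma~\ref{lem:es:acc} as the engine, and the crux being a joint comparability/well-foundedness argument --- is exactly the shape of that development. You are also right that the naive steps you list en route (the regress argument for ``no level is self-membered'', and the claim that $\bigcap A \in A$) are precisely where the circularity bites: neither goes through on its own in \ref{ext}~+~\ref{sep}, and both get absorbed into the single carefully arranged induction you flag in your final paragraph. So your proposal matches the paper's (non-)proof, and your diagnosis of where the real content lies is accurate.
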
\noindent
This last result allows us to define the rank of $a$, written $\text{rank}(a)$, within \LT, in terms of the $\in$-least level with $a$ as a subset.

\subsection{The theory \PCTT}\label{s:formulatingPCTT}
The theory \PCTT extends \CTT with two new principles.\footnote{Compare these with \textcite[149 \emph{Ext}, 153 \emph{Nullity}]{DegenJohannsen:CHOL}.} First, we add a version of `extensionality', for all $\alpha \leq \beta$:
\begin{listclean}
    \item[\emph{Type-Ext}.]
    $\forall a^{\alpha+1} \forall b^{\beta+1}([\forall x^\alpha(a^{\alpha+1}(x^\alpha) \lonlyif b^{\beta+1}(x^\alpha)) \land {}$\\
    \phantom{.}\hspace{7em}$\forall x^\beta(b^{\beta+1}(x^\beta) \lonlyif (\exists y^\alpha \eqCTT x^\beta)a^{\alpha+1}(y^\alpha))] \lonlyif a^{\alpha+1} \eqCTT b^{\beta+1})$
\end{listclean}
Second, to achieve `purity', we add an axiom stating that there is exactly one object:
\begin{listclean}
    \item[\emph{Type-Purity}.] $\forall x^0\forall y^0\ x^0 = y^0$
\end{listclean}\noindent
Note that, modulo \CTT's other axioms, Type-Founded follows from {Type-Ext} and {Type-Purity}.

To begin our simulation of \Zr within \PCTT, we will show that \PCTT[\tau] proves \typetrans{\ref{ext}}{\kappa} and \typetrans{\ref{sep}}{\kappa}.
\begin{lem}\label{lem:CTT:ext}$\PCTT[\tau] \proves \typetrans{\ref{ext}}{\kappa}$, whenever $\kappa + 2 < \tau$.
\end{lem}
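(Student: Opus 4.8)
The plan is to reduce $\typetrans{\ref{ext}}{\kappa}$, namely
$\forall a^\kappa\forall b^\kappa(\forall x^\kappa(x^\kappa \inCTT a^\kappa \liff x^\kappa \inCTT b^\kappa) \lonlyif a^\kappa \eqCTT b^\kappa)$,
to a single application of \emph{Type-Ext} at the successor type $\kappa\mathord+1$. So, fix $a^\kappa,b^\kappa$ satisfying the antecedent; I must derive $a^\kappa\eqCTT b^\kappa$. First I would invoke Type-Raising (Lemma \ref{fact:TR}, applicable since $\kappa+2<\tau$) to fix surrogates $\bar a^{\kappa+1},\bar b^{\kappa+1}$ with $a^\kappa \eqCTT \bar a^{\kappa+1}$ and $b^\kappa \eqCTT \bar b^{\kappa+1}$. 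This move is what lets the argument proceed uniformly: $\kappa$ itself may be $0$ or a limit, so $a^\kappa$ need not literally be of successor type, but $\bar a^{\kappa+1}$ always is, and \emph{Type-Ext} applies to successor types.

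The first substantive step is a bridging claim: $\forall x^\kappa(x^\kappa \inCTT a^\kappa \liff \bar a^{\kappa+1}(x^\kappa))$, and likewise with $b$. The direction from $\bar a^{\kappa+1}(x^\kappa)$ to $x^\kappa \inCTT a^\kappa$ is immediate from the \emph{definition} of $\inCTT$: take $\bar a^{\kappa+1}$ itself as the witnessing type-$(\kappa\mathord+1)$ entity, using $\bar a^{\kappa+1}\eqCTT a^\kappa$ (symmetry of $\eqCTT$). For the converse, I would unpack $x^\kappa\inCTT a^\kappa$ to obtain a $w^{\kappa+1}$ with $w^{\kappa+1}\eqCTT a^\kappa$ and $w^{\kappa+1}(x^\kappa)$; chain $w^{\kappa+1}\eqCTT a^\kappa$ with $a^\kappa\eqCTT\bar a^{\kappa+1}$ to get $w^{\kappa+1}\eqCTT\bar a^{\kappa+1}$; then transfer the application $w^{\kappa+1}(x^\kappa)$ to $\bar a^{\kappa+1}(x^\kappa)$ by comprehending the type-$(\kappa\mathord+2)$ entity $c$ with $\forall y^{\kappa+1}(c(y^{\kappa+1})\liff y^{\kappa+1}(x^\kappa))$ and instantiating. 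With the bridging claim established, the antecedent yields $\forall x^\kappa(\bar a^{\kappa+1}(x^\kappa)\liff\bar b^{\kappa+1}(x^\kappa))$, and since $x^\kappa\eqCTT x^\kappa$ always holds, both conjuncts of the hypothesis of \emph{Type-Ext} (taken with $\alpha=\beta=\kappa$) follow; hence $\bar a^{\kappa+1}\eqCTT\bar b^{\kappa+1}$.

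It then remains to transport this back down to $a^\kappa\eqCTT b^\kappa$. Here I would note that each of $a^\kappa\eqCTT\bar a^{\kappa+1}$, $\bar a^{\kappa+1}\eqCTT\bar b^{\kappa+1}$, $\bar b^{\kappa+1}\eqCTT b^\kappa$ unfolds \emph{by definition} to a universally quantified biconditional over type-$(\kappa\mathord+2)$ entities, so chaining them gives $\forall v^{\kappa+2}(v^{\kappa+2}(a^\kappa)\liff v^{\kappa+2}(b^\kappa))$. But $a^\kappa\eqCTT b^\kappa$ unfolds instead at level $\kappa\mathord+1$, so one more bridge is needed: for any $z^{\kappa+1}$, Lemma \ref{lem:Foundation} together with reflexivity of $\eqCTT$ gives $z^{\kappa+1}(a^\kappa)\liff a^\kappa\inCTT z^{\kappa+1}$, and the definition of $\inCTT$ rewrites the right-hand side as $\exists w^{\kappa+2}(w^{\kappa+2}\eqCTT z^{\kappa+1}\land w^{\kappa+2}(a^\kappa))$; the same holds for $b^\kappa$, so the displayed agreement on type-$(\kappa\mathord+2)$ entities forces $z^{\kappa+1}(a^\kappa)\liff z^{\kappa+1}(b^\kappa)$, i.e.\ $a^\kappa\eqCTT b^\kappa$.

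The main obstacle is bookkeeping against the tight bound $\kappa+2<\tau$. One must resist invoking the Leibniz and transitivity facts of Lemma \ref{fact:CTT} at the type patterns that arise here, since their stated hypothesis $\max(\alpha,\beta,\gamma)+2<\tau$ would demand $\kappa+3<\tau$. Fortunately the required instances are benign: every $\eqCTT$-transitivity used occurs among claims that all unfold at the single level $\kappa\mathord+2$, so it is a trivial chaining of biconditionals requiring no comprehension; and the one genuine Leibniz step (transferring an application across $\eqCTT$) can be re-derived inline from a comprehension instance at level $\kappa\mathord+1$, which is exactly what $\kappa+2<\tau$ licenses. It is also worth noting that the proof leans only on \emph{Type-Ext} among \PCTT's extra axioms, together with \emph{Type-Founded} as used inside Lemma \ref{lem:Foundation}.
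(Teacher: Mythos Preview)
Your overall strategy---lift $a^\kappa$ and $b^\kappa$ to type $\kappa\mathord+1$ and invoke \emph{Type-Ext} once with $\alpha=\beta=\kappa$---is sound and genuinely different from the paper's route. The paper instead descends: when $\kappa$ is a limit it works with $a^\alpha,b^\beta$ for $\alpha,\beta<\kappa$, raises to $\alpha\mathord+1,\beta\mathord+1$ (still below $\kappa$), applies \emph{Type-Ext} there, and only at the end uses the Limit$^\kappa$-rule twice to push the outer quantifiers up to type $\kappa$. Your version avoids both the case split on whether $\kappa$ is a limit and any use of the Limit-rule, at the cost of applying \emph{Type-Ext} at the highest possible level the bound $\kappa+2<\tau$ permits. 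The paper's version, by contrast, keeps all uses of Lemmas~\ref{fact:TR}--\ref{lem:Foundation} comfortably inside their stated hypotheses and illustrates the Limit-rule technique reused throughout \S\ref{s:PCTT:appendix}.

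There is, however, a bookkeeping slip in your transport-back-down step, of exactly the kind you warn against. You write $a^\kappa\inCTT z^{\kappa+1}$ and unfold it as $\exists w^{\kappa+2}(w^{\kappa+2}\eqCTT z^{\kappa+1}\land w^{\kappa+2}(a^\kappa))$. But by the definition of $\inCTT$ the witness lives at type $\max(\kappa,\kappa\mathord+1)+1=\kappa\mathord+2$, and then $w^{\kappa+2}\eqCTT z^{\kappa+1}$ unfolds at type $\kappa\mathord+3$, which the hypothesis $\kappa+2<\tau$ does not provide. The fix is much simpler than your detour: from $\forall v^{\kappa+2}(v^{\kappa+2}(a^\kappa)\liff v^{\kappa+2}(b^\kappa))$ you get $z^{\kappa+1}(a^\kappa)\liff z^{\kappa+1}(b^\kappa)$ directly by one application of \AllE{\kappa+2}{\kappa+1}, since $z^{\kappa+1}(a^\kappa)$ is already well-formed. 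No appeal to $\inCTT$ or to Lemma~\ref{lem:Foundation} is needed at this point.
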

\begin{proof}
    Suppose $\kappa$ is a limit (the proof is easier when $\kappa$ is a successor).   Without loss of generality, fix $\alpha \leq \beta < \kappa$ and suppose $\forall x^\kappa(x^\kappa \inCTT a^\alpha \liff x^\kappa \inCTT b^\beta)$. 
    
    Using Lemma \ref{fact:TR}, find $a^{\alpha+1} \eqCTT a^\alpha$ and $b^{\beta+1} \eqCTT b^\beta$. Suppose $a^{\alpha+1}(x^\alpha)$. So $x^\alpha \inCTT a^{\alpha+1}$ by Lemma \ref{lem:Foundation}; by Lemma \ref{fact:TR} there is $x^\kappa \eqCTT x^\alpha$, and $x^\kappa \inCTT a^{\alpha}$ by Lemma \ref{fact:CTT}; so $x^\kappa \inCTT b^\beta$, and now $b^{\beta+1}(x^\alpha)$ by Lemmas \ref{fact:CTT} and Lemma \ref{lem:Foundation}. Similar reasoning shows: if $b^{\beta+1}(x^\beta)$ then $(\exists y^\alpha \eqCTT x^\beta)a^{\alpha+1}(y^\alpha)$. By Type-Ext, $a^{\alpha+1} \eqCTT b^{\beta+1}$; hence $a^\alpha \eqCTT b^\beta$ by Lemma \ref{fact:CTT}. Generalizing, for any $\alpha, \beta < \kappa$:
    	$$\forall x^\kappa(x^\kappa \inCTT a^\alpha \liff x^\kappa \inCTT b^\beta) \lonlyif a^\alpha \eqCTT b^\beta$$
	Now \typetrans{\ref{ext}}{\kappa} holds, using Limit$^\kappa$ twice.
\end{proof}
\begin{lem}$\PCTT[\tau] \proves \typetrans{\ref{sep}}{\kappa}$, whenever $\kappa + 2 < \tau$.
\end{lem}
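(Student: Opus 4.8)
The plan is to prove the statement one level at a time and then glue with Limit$^\kappa$, exactly mirroring the proof of Lemma \ref{lem:CTT:ext}. Write $\typetrans{\phi}{\kappa}$ for the translation of the separation-formula, with free variable $x^\kappa$ and all bound variables at level $\kappa$; note that substituting a term of any other type for $x^\kappa$ still yields a well-formed formula, since $\inCTT$ and $\eqCTT$ are defined between terms of arbitrary types. My target intermediate claim is: for each $\alpha < \kappa$,
$$\PCTT[\tau] \proves \forall a^\alpha \exists b^\kappa \forall x^\kappa\bigl(x^\kappa \inCTT b^\kappa \liff (\typetrans{\phi}{\kappa}(x^\kappa) \land x^\kappa \inCTT a^\alpha)\bigr).$$
Assuming this, letting $\chi(a)$ abbreviate $\exists b^\kappa \forall x^\kappa(x^\kappa \inCTT b^\kappa \liff (\typetrans{\phi}{\kappa}(x^\kappa) \land x^\kappa \inCTT a))$, I have $\PCTT[\tau] \proves \forall a^\alpha \chi(a^\alpha)$ for every $\alpha < \kappa$, so one application of Limit$^\kappa$ delivers $\forall a^\kappa \chi(a^\kappa)$, which is $\typetrans{\ref{sep}}{\kappa}$. (When $\kappa$ is a successor $\kappa_0 + 1$, Limit$^\kappa$ is unavailable, but the argument below, run at level $\kappa_0$, already produces $b^\kappa$ directly, so that case is easier; I would note this in passing, as Lemma \ref{lem:CTT:ext} does.)

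For the intermediate claim, fix $\alpha < \kappa$ and $a^\alpha$. Since $\typetrans{\phi}{\kappa}(x^\alpha) \land x^\alpha \inCTT a^\alpha$ is well-formed with free variable $x^\alpha$ and does not contain a fresh $z^{\alpha+1}$, \CTT-Comprehension at level $\alpha$ supplies $b^{\alpha+1}$ with $\forall x^\alpha(b^{\alpha+1}(x^\alpha) \liff (\typetrans{\phi}{\kappa}(x^\alpha) \land x^\alpha \inCTT a^\alpha))$, and Lemma \ref{fact:TR} (using $\alpha+1 \leq \kappa$ and $\kappa + 1 < \tau$) gives $b^\kappa \eqCTT b^{\alpha+1}$. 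The key observation, used in both directions of the required equivalence, is that an $\inCTT$-member of $a^\alpha$ has bounded type: if $x^\kappa \inCTT a^\alpha$, then type-raising $a^\alpha$ to $a^{\alpha+1} \eqCTT a^\alpha$ (Lemma \ref{fact:TR}, with $\inCTT$ transferred across by Lemma \ref{fact:CTT}) and applying Type-Founded yields some $y^\alpha$ with $x^\kappa \eqCTT y^\alpha$. Given this, the right-to-left direction runs: from $\typetrans{\phi}{\kappa}(x^\kappa) \land x^\kappa \inCTT a^\alpha$, fix $y^\alpha \eqCTT x^\kappa$, transfer to $\typetrans{\phi}{\kappa}(y^\alpha) \land y^\alpha \inCTT a^\alpha$ by substitutivity (Lemma \ref{fact:CTT}\eqref{eqCTT:E}), read off $b^{\alpha+1}(y^\alpha)$ from the Comprehension equivalence, hence $y^\alpha \inCTT b^{\alpha+1}$ (Lemma \ref{lem:Foundation}), hence $y^\alpha \inCTT b^\kappa$ and then $x^\kappa \inCTT b^\kappa$ (Lemma \ref{fact:CTT}\eqref{inCTT:in}--\eqref{inCTT:ni}). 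The left-to-right direction is the mirror image: from $x^\kappa \inCTT b^\kappa$ get $x^\kappa \inCTT b^{\alpha+1}$ (Lemma \ref{fact:CTT}), apply Type-Founded to obtain $y^\alpha \eqCTT x^\kappa$ with $y^\alpha \inCTT b^{\alpha+1}$, recover $b^{\alpha+1}(y^\alpha)$ by Lemma \ref{lem:Foundation} and substitutivity, hence $\typetrans{\phi}{\kappa}(y^\alpha) \land y^\alpha \inCTT a^\alpha$ by Comprehension, and transfer back to $x^\kappa$ by substitutivity. Generalizing on $a^\alpha$ gives the intermediate claim; throughout, the hypothesis $\kappa + 2 < \tau$ is exactly what keeps every auxiliary level—up to $\kappa+2$, which surfaces inside the unpacked definition of $\inCTT$ between type $\kappa$ terms (cf.\ footnote \ref{fn:kappa+2explanation})—below $\tau$.

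The main obstacle is the one this level-by-level detour is designed to avoid. \CTT-Comprehension naturally produces the separated collection one level too high, at $\kappa+1$, and there is no direct way to push a type $\kappa+1$ entity down to type $\kappa$: a type $\kappa$ entity can have $\inCTT$-members of types unbounded below $\kappa$, while Type-Founded only descends a single level and Type-Ext relates entities only at successor levels. Running the construction at each $\alpha < \kappa$—where the members of $a^\alpha$ genuinely are of bounded type, so Type-Founded suffices—and then invoking Limit$^\kappa$ is what resolves this, just as in the proof of $\typetrans{\ref{ext}}{\kappa}$.
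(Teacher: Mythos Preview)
Your proof is correct and follows essentially the same route as the paper: run \CTT-Comprehension at each level $\alpha < \kappa$, lift to $\kappa$ via Type-Raising, verify the biconditional using Type-Founded and Lemma \ref{lem:Foundation}, and close with Limit$^\kappa$. The only cosmetic difference is that the paper comprehends on the formula $(\forall x^\kappa \eqCTT x^\alpha)(\phi(x^\kappa) \land x^\kappa \inCTT a^{\alpha+1})$ rather than directly substituting $x^\alpha$ into $\typetrans{\phi}{\kappa}$; your observation that the substitution remains well-formed (since every atom in the translation is built from $\inCTT$ and $\eqCTT$) is exactly what justifies your shortcut.
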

\begin{proof}
    Suppose $\kappa$ is a limit (the proof is easier when $\kappa$ is a successor).    Fix $\alpha < \kappa$ and $\phi$ such that $\phi(x^\kappa)$ is well-formed. Fix $a^\alpha$ and find $a^{\alpha+1} \eqCTT a^\alpha$ by Lemma \ref{fact:TR}. Using \CTT-Comprehension, fix $b^{\alpha+1}$ such that: 
		$$\forall x^\alpha(b^{\alpha+1}(x^\alpha) \liff (\forall x^\kappa \eqCTT x^\alpha)(\phi(x^\kappa) \land x^\kappa \inCTT a^{\alpha+1}))$$
	Suppose $z^\kappa \inCTT b^{\alpha+1}$; by Lemma \ref{lem:Foundation} there is $z^\alpha \eqCTT z^\kappa$ such that $b^{\alpha+1}(z^\alpha)$; so using the biconditional, $\phi(z^\kappa) \land z^\kappa \inCTT a^{\alpha+1}$. 
	Conversely, suppose $\phi(z^\kappa) \land z^\kappa \inCTT a^{\alpha+1}$; by Type-Founded there is $z^\alpha \eqCTT z^\kappa$, and $(\forall x^\kappa \eqCTT z^\alpha)(\phi(x^\kappa) \land x^{\kappa} \inCTT a^{\alpha+1})$ by Lemma \ref{fact:CTT}; so $b^{\alpha+1}(z^\alpha)$, and $z^\kappa \inCTT b^{\alpha+1}$ by Lemma \ref{lem:Foundation}. Summarizing: $z^\kappa \inCTT b^{\alpha+1} \liff (\phi(z^\kappa) \land z^\kappa \inCTT a^{\alpha+1})$. By Lemma \ref{fact:TR} there is $b^\kappa \eqCTT b^{\alpha+1}$. Generalizing and using Lemma \ref{fact:CTT}, for any $\alpha < \kappa$: 
		$$\forall a^\alpha \exists b^\kappa \forall z^\kappa(z^\kappa \inCTT b^\kappa \liff (\phi(z^\kappa) \land z^\kappa \inCTT a^\alpha))$$
	Now \typetrans{\ref{sep}}{\kappa} follows by the  Limit$^\kappa$-rule.
\end{proof}\noindent
Consequently, \PCTT[\tau] proves Lemma \typetrans{\ref{lem:es:acc}}{\kappa} and Theorem \typetrans{\ref{thm:es:wellorder}}{\kappa}. The latter result states that the $\levpred^{(\kappa)}$s are well-ordered by $\inCTT$. Here, `$\levpred^{(\kappa)}$' is the obvious translation of the definition of `$\levpred$'; we also call these levels$^{(\kappa)}$. In what follows, we also write things like $x^\kappa \subseteq^{(\kappa)} y^\kappa$ for $(\forall v^\kappa\inCTT x^\kappa)v^\kappa \inCTT y^\kappa$. 

Our next goal is to show that \PCTT[\tau] simulates our set-theoretic principle of foundation, i.e.\ \ref{lt:strat}. 
We first need a small subsidiary lemma, which says (roughly) that any subset of a low-typed entities is itself low-typed:
\begin{lem}\label{lem:CTT:subset}
    $\PCTT[\tau] \proves \forall a^\alpha (\forall b^\kappa \subseteq^{(\kappa)} a^\alpha)\exists x^\alpha\ b^\kappa \eqCTT x^\alpha$, whenever $\kappa + 2 < \tau$.
\end{lem}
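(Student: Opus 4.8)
The plan is to prove this scheme by (meta-level) induction on the type-index $\alpha$, after first disposing of the trivial case. If $\alpha > \kappa$ the claim is immediate from Type-Raising (Lemma \ref{fact:TR}): every $b^\kappa$ is already $\eqCTT$ to some $x^\alpha$, and the hypothesis $b^\kappa \subseteq^{(\kappa)} a^\alpha$ is not even needed. So I would assume throughout that $\alpha \leq \kappa$, which together with $\kappa + 2 < \tau$ keeps every displayed formula (including the instances of $\inCTT$, $\eqCTT$ and Lemma \ref{lem:Foundation} I invoke below) well-formed.

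For the base case $\alpha = 0$: if $b^\kappa \subseteq^{(\kappa)} a^0$ then any $\inCTT$-member of $b^\kappa$ would be an $\inCTT$-member of $a^0$, which Type-Base forbids, so $b^\kappa$ has no $\inCTT$-members; taking the object $x^0$, raising it to $x^\kappa \eqCTT x^0$ by Lemma \ref{fact:TR}, and transferring Type-Base along this equivalence via Lemma \ref{fact:CTT}, $x^\kappa$ is likewise $\inCTT$-memberless, so $x^\kappa$ and $b^\kappa$ share their (empty) stock of $\inCTT$-members and $x^\kappa \eqCTT b^\kappa$ by \typetrans{\ref{ext}}{\kappa} (Lemma \ref{lem:CTT:ext}); then $b^\kappa \eqCTT x^0$ by transitivity. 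For the successor case $\alpha = \delta+1$: by Lemma \ref{lem:Foundation} every $\inCTT$-member of $a^{\delta+1}$, hence every $\inCTT$-member of $b^\kappa$, is $\eqCTT$ to something of type $\delta$; so I would use \CTT-Comprehension at type $\delta$ to fix $c^{\delta+1}$ with $\forall x^\delta(c^{\delta+1}(x^\delta) \liff x^\delta \inCTT b^\kappa)$, raise it to $c^\kappa \eqCTT c^{\delta+1}$, and then — chasing the definitions with Lemmas \ref{fact:TR}--\ref{lem:Foundation}, crucially using that members of $b^\kappa$ have type-$\delta$ copies for the harder inclusion — check that $c^\kappa$ and $b^\kappa$ have exactly the same $\inCTT$-members. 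By \typetrans{\ref{ext}}{\kappa} this gives $c^\kappa \eqCTT b^\kappa$, hence $b^\kappa \eqCTT c^{\delta+1}$, witnessing $\exists x^{\delta+1}\, b^\kappa \eqCTT x^{\delta+1}$. Neither of these cases uses the induction hypothesis.

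The limit case $\alpha = \lambda$ is where the real care is needed: \CTT-Comprehension cannot manufacture a type-$\lambda$ entity directly, and the naive strategy of applying Limit$^\lambda$ to the formula ``$(\forall b^\kappa \subseteq^{(\kappa)} v)\exists y\, b^\kappa \eqCTT y$'' fails, because the type of the existential witness $y$ would have to track the (varying) type of $v$, whereas Limit$^\lambda$ only tolerates the type of the outermost bound variable varying. The fix I would use is to apply the induction hypothesis and then raise: for each $\mu < \lambda$ the hypothesis gives $\forall a^\mu(\forall b^\kappa \subseteq^{(\kappa)} a^\mu)\exists x^\mu\, b^\kappa \eqCTT x^\mu$, and composing with Type-Raising ($x^\mu \eqCTT x^\lambda$) and transitivity (Lemma \ref{fact:CTT}) converts this into $\forall a^\mu\,\chi(a^\mu)$, where $\chi(v)$ is the formula $(\forall b^\kappa \subseteq^{(\kappa)} v)\exists x^\lambda\, b^\kappa \eqCTT x^\lambda$ — now a formula in which only the type of $v$ varies between $\mu$ and $\lambda$. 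Since \PCTT[\tau] proves $\forall a^\mu\,\chi(a^\mu)$ for every $\mu < \lambda$, Limit$^\lambda$ delivers $\forall a^\lambda\,\chi(a^\lambda)$, which is exactly the required instance. The main obstacle, then, is not any hard calculation but precisely this bookkeeping around Limit$^\lambda$: one must be sure that after pushing the witness up to type $\lambda$ the only free-variable type still differing between premises and conclusion is the one bound by the outer quantifier, so that the infinitary rule legitimately applies; by comparison the zero and successor cases are routine definition-chasing with Lemmas \ref{fact:TR}--\ref{lem:Foundation} and \typetrans{\ref{ext}}{\kappa}.
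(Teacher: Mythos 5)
Your proof is correct, but it takes a genuinely different route from the paper's. The paper does not induct on $\alpha$: it first uses Limit$^\kappa$ to reduce the problem to showing $\exists x^\alpha\ b^\beta \eqCTT x^\alpha$ for an arbitrary $\beta < \kappa$ with $b^\beta \subseteq^{(\kappa)} a^\alpha$, dispatches $\beta \leq \alpha$ by Lemma \ref{fact:TR}, and in the remaining case (with $\alpha$ a limit) proves, for each $\gamma < \alpha$, the conditional $\exists x^{\gamma+1}\ a^\alpha \eqCTT x^{\gamma+1} \lonlyif \exists x^{\gamma+1}\ b^\beta \eqCTT x^{\gamma+1}$ (via \CTT-Comprehension and the Type-Ext axiom), then runs a reductio: if $b^\beta$ had no type-$\alpha$ copy it would have no copy at any type below $\alpha$, so neither would $a^\alpha$, whence Limit$^\alpha$ applied to the very simple formula $a^\alpha \neqCTT x$ yields $a^\alpha \neqCTT a^\alpha$, a contradiction. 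Your meta-level transfinite induction replaces that reductio trick: your base and successor cases are exactly the cases the paper waves away as ``easier otherwise'' (and your treatment of them, via Type-Base and Type-Purity, and via \CTT-Comprehension, Lemma \ref{lem:Foundation} and the already-proved Lemma \ref{lem:CTT:ext}, goes through), and your limit step---raising the inductively supplied witness to type $\lambda$ so that only the type of the outermost bound variable varies, then applying Limit$^\lambda$ to the whole statement---is legitimate for precisely the reason you flag: since $\mu \leq \lambda \leq \kappa$, the unfoldings of $\inCTT$ and $\eqCTT$ are uniform (the bounding types are always $\kappa+1$ and $\kappa+2$), so the premises really are instances of a single formula. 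The meta-induction itself is unproblematic in this setting, since \PCTT[\tau]-derivations are already infinitary and the paper uses the same device to prove Lemma \ref{lem:CTT:strat}. As for what each approach buys: yours is more modular, keeps $b$ at type $\kappa$ throughout (no initial Limit$^\kappa$ reduction), and makes the non-limit cases explicit; the paper's is shorter, applies a Limit rule only to the atomic-looking formula $a^\alpha \neqCTT x^\delta$ (so the uniformity bookkeeping is trivial), and needs no appeal to previously established instances of the lemma for smaller $\alpha$.
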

\begin{proof}
    Suppose $\alpha$ and $\kappa$ are limits (the proof is easier otherwise). Let $\beta < \kappa$, and fix $b^\beta \subseteq^{(\kappa)} a^\alpha$; it suffices to show that $\exists x^\alpha\ b^\beta \eqCTT x^\alpha$, since we can then use Limit$^\kappa$ to establish the result. 
    
    If $\beta \leq \alpha$, Lemma \ref{fact:TR} immediately tells us that $\exists x^\alpha\ b^\beta \eqCTT x^\alpha$. Suppose instead that $\beta > \alpha$. Fix $\gamma < \alpha$, and suppose there is some $a^{\gamma+1} \eqCTT a^\alpha$. Using Lemma \ref{fact:TR}, let $b^{\beta+1} \eqCTT b^\beta$. By \CTT-Comprehension, there is $c^{\gamma+1}$ such that: 
        $$\forall v^\gamma(c^{\gamma+1}(v^\gamma) \liff b^{\beta+1}(v^\gamma))$$
    Using Lemmas \ref{fact:TR}--\ref{lem:Foundation}: if $b^{\beta+1}(v^\beta)$, then $v^\beta \inCTT a^{\gamma+1}$ since $b^\beta \subseteq^{(\kappa)} a^{\gamma+1}$, so that there is $y^\gamma \eqCTT v^\beta$; now $b^{\beta+1}(y^\gamma)$, so that $c^{\gamma+1}(y^\gamma)$. Generalizing, $\forall v^\beta(b^{\beta+1}(v^\beta) \lonlyif (\exists y^\gamma \eqCTT v^\beta)c^{\gamma+1}(y^\gamma))$. By Type-Ext, $c^{\gamma+1} \eqCTT b^{\beta+1} \eqCTT b^\beta$. Summarizing all this, we have established the following conditional, for each $\gamma < \alpha$:
        $$\exists x^{\gamma+1}\ a^\alpha \eqCTT x^{\gamma+1} \lonlyif \exists x^{\gamma+1}\ b^\beta \eqCTT x^{\gamma+1}$$
    Now, for reductio, suppose that $\forall x^\alpha\ b^\beta \neqCTT x^\alpha$. Then $\forall x^{\gamma+1}\ b^\beta \neqCTT x^{\gamma+1}$ for all $\gamma < \alpha$. So, by the relevant conditional, $\forall x^{\gamma+1}\ a^\alpha \neqCTT x^{\gamma+1}$. By the Limit$^\alpha$-rule, $\forall x^\alpha\ a^\alpha \neqCTT x^\alpha$, a contradiction. Discharging the reductio, $\exists x^\alpha\ b^\beta \eqCTT x^\alpha$, as required.
\end{proof}
\begin{lem}\label{lem:CTT:strat}
	$\PCTT[\tau] \proves \typetrans{\ref{lt:strat}}{\kappa}$, whenever $\kappa + 2 < \tau$.
\end{lem}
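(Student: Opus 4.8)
The plan is to exploit the fact that, under the translation, the \emph{types} below $\kappa$ already furnish a hierarchy of \emph{level}$^{(\kappa)}$s. Assume $\kappa$ is a limit (the successor case is easier, as before). Since $\typetrans{\ref{lt:strat}}{\kappa}$ is just $\forall a^\kappa(\exists s^\kappa\supseteq^{(\kappa)}a^\kappa)\levpred^{(\kappa)}(s^\kappa)$, it suffices by Limit$^\kappa$ to prove $(\dagger_\gamma)$: $\forall a^\gamma(\exists s^\kappa\supseteq^{(\kappa)}a^\gamma)\levpred^{(\kappa)}(s^\kappa)$, for each $\gamma<\kappa$. I would prove this by transfinite induction on $\gamma$. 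The limit stages are automatic: for limit $\lambda<\kappa$, $(\dagger_\lambda)$ follows from $\{(\dagger_\gamma):\gamma<\lambda\}$ by the Limit$^\lambda$-rule applied to the formula $(\exists s^\kappa\supseteq^{(\kappa)}x)\levpred^{(\kappa)}(x)$. And $(\dagger_0)$ is trivial, since by Type-Base no $v^\kappa$ has $v^\kappa\inCTT a^0$, so any level$^{(\kappa)}$ will do (one exists, by the construction below).

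So the work lies in the successor case, which reduces to a fact about ``type-levels''. For each $\gamma<\kappa$ let $W_\gamma$ be the type $\kappa$ entity obtained by taking, via \CTT-Comprehension at type $\gamma$, a $z^{\gamma+1}$ holding of every type $\gamma$ entity, and then raising it to type $\kappa$ by Lemma \ref{fact:TR}; using Lemma \ref{lem:Foundation}, Lemma \ref{fact:CTT} and Type-Founded one checks that $v^\kappa\inCTT W_\gamma$ iff $\exists v^\gamma\ v^\kappa\eqCTT v^\gamma$, so that $W_\gamma$ collects exactly the (things $\eqCTT$ the) type $\gamma$ entities. Now if $a^{\delta+1}$ is given, Lemma \ref{lem:Foundation} tells us each $v^\kappa\inCTT a^{\delta+1}$ has $v^\kappa\eqCTT v^\delta$ for some $v^\delta$, whence $v^\kappa\inCTT W_\delta$ by Lemma \ref{fact:CTT}; thus $a^{\delta+1}\subseteq^{(\kappa)}W_\delta$, and the successor case is settled once we know $\levpred^{(\kappa)}(W_\delta)$. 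So everything comes down to proving $\PCTT[\tau]\proves\levpred^{(\kappa)}(W_\gamma)$ for each $\gamma<\kappa$.

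I would prove \emph{that} by a second transfinite induction on $\gamma$, with the limit step as the crux. Write $\powerset^{(\kappa)}(x^\kappa)$ for the $y^\kappa$ with $\forall v^\kappa(v^\kappa\inCTT y^\kappa\liff v^\kappa\subseteq^{(\kappa)}x^\kappa)$, which exists by \CTT-Comprehension. For $\gamma=0$: by Type-Purity there is just one type $0$ entity, which (Type-Base, Lemma \ref{fact:CTT}) raises to the empty type $\kappa$ entity $\emptyset^\kappa$; one checks $W_0=\powerset^{(\kappa)}(\emptyset^\kappa)$, and $\emptyset^\kappa$ is a level$^{(\kappa)}$ — witnessed by the empty history, essentially by logic plus $\typetrans{\ref{ext}}{\kappa}$. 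For $\gamma=\delta+1$: Lemma \ref{lem:CTT:subset} (a subset$^{(\kappa)}$ of a type $\delta+1$ entity is $\eqCTT$ a type $\delta+1$ entity) yields $W_{\delta+1}=\powerset^{(\kappa)}(W_\delta)$; since $W_\delta$ is a level$^{(\kappa)}$ by hypothesis, and \PCTT[\tau] proves $\typetrans{\ref{ext}}{\kappa}$, $\typetrans{\ref{sep}}{\kappa}$ (so also Lemma \typetrans{\ref{lem:es:acc}}{\kappa}, Theorem \typetrans{\ref{thm:es:wellorder}}{\kappa}, and the other basic facts about levels$^{(\kappa)}$ from \textcite{Button:LT1}), we may invoke the translation of the standard fact that a powerset of a level is a level. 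For limit $\gamma=\lambda$: here $W_\lambda$ should represent $V_{\lambda+1}=\powerset(V_\lambda)$, so I would first build a $V_\lambda$ of type $\kappa$ representing $V_\lambda=\bigcup_{\delta<\lambda}V_{\delta+1}$. The trick is that the \emph{metatheoretically}-indexed family $\{W_\delta:\delta<\lambda\}$ coincides with the \emph{object-language-definable} collection $B$ of those $r^\kappa\inCTT W_\lambda$ which are ``successor levels$^{(\kappa)}$'' (i.e.\ $\levpred^{(\kappa)}(r^\kappa)$ and $\exists s^\kappa(\levpred^{(\kappa)}(s^\kappa)\land r^\kappa=\powerset^{(\kappa)}(s^\kappa))$); so $B$, and hence $V_\lambda:=\bigcup^{(\kappa)}B$ (which lies $\subseteq^{(\kappa)}W_\lambda$), exists by $\typetrans{\ref{sep}}{\kappa}$. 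By the induction hypothesis each member of $B$ is a level$^{(\kappa)}$, so the translation of the further Level-Theory fact that a union of a set of levels is a level gives $\levpred^{(\kappa)}(V_\lambda)$. Finally one checks $W_\lambda=\powerset^{(\kappa)}(V_\lambda)$, using Type-Founded, the Limit$^\lambda$-rule and Lemma \ref{lem:CTT:subset}: the delicate direction is $W_\lambda\subseteq^{(\kappa)}\powerset^{(\kappa)}(V_\lambda)$ — given a type $\lambda$ entity, raising it to type $\lambda+1$ and applying Type-Founded shows each of its $\inCTT$-members is $\eqCTT$ a type $\lambda$ entity, and then Limit$^\lambda$ (applied to the observation that for each $\delta<\lambda$ every type $\delta$ entity is $\inCTT V_\lambda$) pins those members down as $\inCTT V_\lambda$. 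So $W_\lambda$ is the level$^{(\kappa)}$ immediately above $V_\lambda$. Chaining the two inductions and discharging the outer reduction by Limit$^\kappa$ delivers $\typetrans{\ref{lt:strat}}{\kappa}$.

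I expect the limit stage of the second induction to be the main obstacle: both getting the \emph{metatheoretic} union $\bigcup_{\delta<\lambda}W_\delta$ to live inside \PCTT[\tau] as a genuine level$^{(\kappa)}$ — which is what forces the detour through ``successor levels below $W_\lambda$'' — and then verifying $W_\lambda=\powerset^{(\kappa)}(V_\lambda)$, which is precisely where the infinitary Limit$^\lambda$-rule, the distinctive strength of \CTT, does the real work. (Everywhere the type-indices in play are at most $\kappa+1$, so the hypothesis $\kappa+2<\tau$ always leaves enough room for Lemmas \ref{fact:TR}--\ref{lem:CTT:subset}.)
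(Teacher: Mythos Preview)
Your overall strategy is sound, but there is a genuine error at the limit stage of your inner induction. You claim that at limit $\lambda$ one has $W_\lambda=\powerset^{(\kappa)}(V_\lambda)$, so that $W_\lambda$ ``represents $V_{\lambda+1}$''. This is false. By the Limit$^\lambda$-rule, every type~$\lambda$ entity already lies $\inCTT V_\lambda$ (since for each $\delta<\lambda$ you have $\forall x^\delta\ x^\delta\inCTT V_\lambda$); combined with the easy inclusion $V_\lambda\subseteq^{(\kappa)}W_\lambda$, this gives $W_\lambda=V_\lambda$, not $\powerset^{(\kappa)}(V_\lambda)$. So $W_\lambda$ is a \emph{limit} level, not a successor level. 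The fix is immediate---$V_\lambda$ is a union of a set of levels, hence a level by the Level-Theory fact you already cite---so your argument survives; but as written the limit case is wrong, and the ``delicate direction'' you worry about ($W_\lambda\subseteq^{(\kappa)}\powerset^{(\kappa)}(V_\lambda)$) is aiming at the wrong target. This is exactly the type/rank mismatch at limits that the paper later handles with the shift function $\ordtypeshunt$.

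Even once repaired, your route is considerably more elaborate than the paper's. You run two nested inductions and import external Level-Theory facts (``powerset of a level is a level'', ``a union of a set of levels is a level''). The paper uses a \emph{single} induction on $\beta\leq\kappa$, proving $\forall a^\beta(\exists s^\beta\supseteq^{(\kappa)}a^\beta)\levpred^{(\kappa)}(s^\beta)$. The limit step is trivial by Limit$^\beta$. At the successor step $\beta=\alpha+1$, the paper takes $s^\beta$ to be your $W_\alpha$ (all type-$\alpha$ entities) and shows it is a level \emph{directly}, by exhibiting its history: $h^\beta\coloneq\{\,x^\alpha:\levpred^{(\kappa)}(x^\alpha)\,\}$. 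The history condition for $h^\beta$ falls straight out of Lemma~\typetrans{\ref{lem:es:acc}}{\kappa}, and the induction hypothesis guarantees each $x^\alpha$ sits inside some member of $h^\beta$. This avoids the nested induction, the external lemmas, and any special analysis of the limit case.
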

\begin{proof}
    We will show that, for each $\beta \leq \kappa$, \PCTT[\tau] proves $\forall a^\beta(\exists s^\beta \supseteq^{(\kappa)} a^\beta)\levpred^{(\kappa)}(s^\beta)$. This is an induction on $\beta$ in the metatheory, where our induction hypothesis is that for each $\alpha < \beta$ we have established $\forall a^\alpha(\exists s^\alpha \supseteq^{(\kappa)} a^\alpha)\levpred^{(\kappa)}(s^\alpha)$
    
	\emph{Induction case when $\beta = 0$.} By Type-Founded, $\forall x^\kappa\ x^\kappa \notinCTT a^0$. So $\histpred^\kappa(a^0)$ and $\levpred^{(\kappa)}(a^0)$, vacuously. So $\forall a^0(\exists s^0 \supseteq^{(\kappa)} a^0)\levpred^{(\kappa)}(s^0)$. 
	
	\emph{Induction case when $\beta$ is a limit.} Applying Lemmas \ref{fact:TR}--\ref{fact:CTT} to our induction hypothesis, we have $\forall a^\alpha(\exists s^\beta \supseteq^{(\kappa)} a^\alpha)\levpred^{(\kappa)}(s^\beta)$. Now $\forall a^\beta(\exists s^\beta \supseteq^{(\kappa)} a^\beta)\levpred^{(\kappa)}(s^\beta)$, by the Limit$^\beta$-rule.
	
	\emph{Induction case when $\beta = \alpha + 1$.} Using \CTT-Comprehension twice, find $h^{\beta}$ and $s^{\beta}$ such that
	\begin{align}
	    \forall x^\alpha&(h^{\beta}(x^\alpha) \liff \levpred^{\kappa}(x^\alpha)) \label{eq:h:hist}\\
	    \forall x^\alpha&\phantom{(}s^{\beta}(x^\alpha) \label{eq:s:lev}
	\end{align}
    Combining these with the induction hypothesis, we obtain:
	\begin{align*}
	    \forall x^\alpha (s^{\beta}(x^\alpha) & \liff (\exists c^\alpha \supseteq^{(\kappa)} x^\alpha)h^{\beta}(c^\alpha)) 
	\end{align*}
    Hence, by Lemmas \ref{fact:TR}--\ref{lem:Foundation} and \ref{lem:CTT:subset}:
    \begin{align}
        \forall x^\kappa &(x^\kappa \inCTT s^{\beta} \liff \exists c^\kappa(x^\kappa \subseteq^{(\kappa)} c^\kappa \inCTT h^{\beta}))         \label{strat:slev} 
	\end{align}
	Next, applying \AllE{\kappa}{\alpha} and \AllI{\alpha}{\alpha} to Lemma \typetrans{\ref{lem:es:acc}}{\kappa} gives:
	\begin{align*}
	    \forall a^\alpha&(\levpred^{(\kappa)}(a^\alpha)  \lonlyif \forall x^\kappa(x^\kappa \inCTT a^\alpha \liff \exists c^\kappa(\levpred^{(\kappa)}(c^\kappa) \land x^\kappa \subseteq^{(\kappa)} c^\kappa \inCTT a^\alpha)))
	\end{align*}
	So, by \eqref{eq:h:hist} and Lemmas \ref{fact:TR}--\ref{lem:Foundation}:
	\begin{align*}
		(\forall a^\kappa \inCTT h^{\beta})\forall x^\kappa(x^\kappa \inCTT a^\kappa \liff (\exists c^\kappa \inCTT h^{\beta})x^\kappa \subseteq^{(\kappa)} c^\kappa \inCTT a^\kappa) \label{strat:shist} 
	\end{align*}
	i.e.\ $h^\beta$ is a history$^{(\kappa)}$. So $s^\beta$ is a level$^{(\kappa)}$, by \eqref{strat:slev}. Moreover, for any $a^{\beta}$, we have $a^\beta \subseteq^{(\kappa)} s^\beta$ by \eqref{eq:s:lev} and Lemmas \ref{fact:TR}--\ref{lem:Foundation}.  So $\forall a^{\beta}(\exists s^{\beta}\supseteq^{(\kappa)} a^\alpha)\levpred^{(\kappa)}(s^{\beta})$. 
\end{proof}\noindent
We have now established \typetrans{LT}{\kappa}. To obtain \typetrans{Zr}{\kappa}, we need just two straightforward results, which we leave to the reader (they hold using \CTT-Comprehension, Lemmas \ref{fact:TR}--\ref{lem:Foundation}, and the Limit-rule).
\begin{lem}
	$\PCTT[\tau] \proves \typetrans{\ref{lt:cre}}{\kappa}$, whenever $\kappa + 2 < \tau$ and $\kappa$ is a limit
\end{lem}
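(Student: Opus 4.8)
The plan is to deduce \typetrans{\ref{lt:cre}}{\kappa}, i.e.\ $\forall a^\kappa\exists b^\kappa\ a^\kappa\inCTT b^\kappa$, from its ``bounded'' version at every type below $\kappa$ by an appeal to the Limit$^\kappa$-rule. So the first step is: for each $\gamma<\kappa$, prove $\forall a^\gamma\exists b^\kappa\ a^\gamma\inCTT b^\kappa$ by a single uniform argument. Writing $\phi(x)$ for $\exists b^\kappa\ x\inCTT b^\kappa$ (which is well-formed at every type $\leq\kappa$, since $\kappa+2<\tau$), we then have $\forall x^\gamma\phi(x^\gamma)$ for every $\gamma<\kappa$, so the Limit$^\kappa$-rule --- available precisely because $\kappa$ is a limit --- delivers $\forall x^\kappa\phi(x^\kappa)$, which is exactly \typetrans{\ref{lt:cre}}{\kappa}.

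For the bounded version, fix $\gamma<\kappa$ and an arbitrary $a^\gamma$, and build a set containing it ``one level up''. Using \CTT-Comprehension at type $\gamma$ --- its comprehending formula being well-formed because $\gamma+1<\tau$ --- take $b^{\gamma+1}$ with $\forall x^\gamma(b^{\gamma+1}(x^\gamma)\liff x^\gamma\eqCTT a^\gamma)$. Reflexivity of $\eqCTT$ (as in the proof of Lemma \ref{fact:TR}) gives $a^\gamma\eqCTT a^\gamma$, whence $b^{\gamma+1}(a^\gamma)$ by \AllE{\gamma}{\gamma}; and Lemma \ref{lem:Foundation}, applicable since $\gamma+2<\tau$, then upgrades this to $a^\gamma\inCTT b^{\gamma+1}$. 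Next, since $\gamma+1\leq\kappa$ and $\kappa+1<\tau$, Lemma \ref{fact:TR} supplies some $b^\kappa$ with $b^{\gamma+1}\eqCTT b^\kappa$, and clause \eqref{inCTT:ni} of Lemma \ref{fact:CTT}, applicable since $\kappa+2<\tau$, transfers membership along this equivalence to yield $a^\gamma\inCTT b^\kappa$. Generalizing on $a^\gamma$ gives $\forall a^\gamma\exists b^\kappa\ a^\gamma\inCTT b^\kappa$, which is what the first step needed.

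I do not expect a genuine obstacle here --- this is doubtless why the authors leave it to the reader --- but two points must be got right. The first is having the correct strategy: one cannot produce the witness at type $\kappa$ directly, but must descend below $\kappa$, build there, and re-ascend via Type-Raising (Lemma \ref{fact:TR}). The second is the essential use of Limit$^\kappa$: the hypothesis that $\kappa$ is a limit cannot be dropped, since \typetrans{\ref{lt:cre}}{\kappa} actually fails for successor $\kappa$ (an entity that first appears at a successor level $\delta+1$ is a member of nothing of type $\delta+1$). One minor simplification: the ``singleton'' of $a^\gamma$ can be replaced by the ``universal'' type-$\gamma$ entity obtained from \CTT-Comprehension with a tautologous comprehending formula, which removes $\eqCTT$ from the comprehension altogether but changes nothing essential.
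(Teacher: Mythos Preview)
Your proof is correct and follows exactly the route the paper indicates: it uses \CTT-Comprehension to build a container one level up, Lemmas \ref{fact:TR}--\ref{lem:Foundation} to massage the result into the form $a^\gamma \inCTT b^\kappa$, and then the Limit$^\kappa$-rule to pass from the bounded versions to \typetrans{\ref{lt:cre}}{\kappa}. The paper gives no more than this hint, and your execution of it is clean; your side-remarks on the necessity of the limit hypothesis and the optional simplification to a universal comprehending formula are accurate and helpful.
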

\begin{lem}\label{lem:CTT:inf}
 	$\PCTT[\tau] \proves \typetrans{\ref{lt:inf}}{\kappa}$, whenever $\kappa > \omega$ and $\kappa + 2 < \tau$
\end{lem}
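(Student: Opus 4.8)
The plan is to exhibit an explicit witness for the existential claim $\typetrans{\ref{lt:inf}}{\kappa}$, i.e.\ for $\exists a^\kappa(\exists x^\kappa\ x^\kappa\inCTT a^\kappa \land (\forall x^\kappa\inCTT a^\kappa)\exists y^\kappa(x^\kappa\inCTT y^\kappa\inCTT a^\kappa))$. I would take $a^\kappa$ to be (a surrogate for) the collection of \emph{all} type $\omega$ entities. Since $\kappa > \omega$ and $\kappa + 2 < \tau$, we have $\omega + 1 < \tau$, so \CTT-Comprehension at type $\omega$ delivers a $z^{\omega+1}$ with $\forall x^\omega\,z^{\omega+1}(x^\omega)$; and since $\omega + 1 \leq \kappa$ and $\kappa + 1 < \tau$, Lemma \ref{fact:TR} gives an $a^\kappa \eqCTT z^{\omega+1}$. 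By Lemmas \ref{fact:CTT} and \ref{lem:Foundation}, $a^\kappa$ then `$\inCTT$-contains' precisely those entities that are $\eqCTT$ to some type $\omega$ entity. The first conjunct of $\typetrans{\ref{lt:inf}}{\kappa}$ is then immediate: by Type-Purity there is a type $0$ entity $e^0$; raising it to type $\omega$ (Lemma \ref{fact:TR}), using $z^{\omega+1}(e^\omega)$ together with Lemmas \ref{lem:Foundation} and \ref{fact:CTT}, and raising again to type $\kappa$, we obtain $e^\kappa\inCTT a^\kappa$, so $\exists x^\kappa\ x^\kappa\inCTT a^\kappa$.

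The real content is the upward-closure conjunct, and the key claim is that \PCTT[\tau] proves $\forall x^\omega\exists y^\omega\ x^\omega\inCTT y^\omega$: every type $\omega$ entity lies inside some (other) type $\omega$ entity. This I would establish by the Limit$^\omega$-rule. Fix $n < \omega$. By Lemma \ref{lem:CTT:strat} (i.e.\ \typetrans{\ref{lt:strat}}{\kappa}, which is already available), every type $n$ entity is $\subseteq^{(\kappa)}$ some level$^{(\kappa)}$ realized at type $n$; by \CTT-Comprehension at type $n$ there is a type $n\mathord{+}1$ entity $S^{n+1}$ with $\forall v^n(S^{n+1}(v^n)\liff(\exists c^n\supseteq^{(\kappa)}v^n)\levpred^{(\kappa)}(c^n))$, so $\forall v^n\, v^n\inCTT S^{n+1}$ by Lemma \ref{lem:Foundation}; and raising $S^{n+1}$ to type $\omega$ (Lemma \ref{fact:TR}, using $n\mathord{+}1\leq\omega$, together with Lemma \ref{fact:CTT}) yields a single type $\omega$ entity $y^\omega$ with $\forall v^n\, v^n\inCTT y^\omega$. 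Hence \PCTT[\tau] proves $\forall x^n\exists y^\omega\ x^n\inCTT y^\omega$ for every $n < \omega$, and the Limit$^\omega$-rule discharges these infinitely many premises into $\forall x^\omega\exists y^\omega\ x^\omega\inCTT y^\omega$. With that in hand, closure follows by bookkeeping: given $x^\kappa\inCTT a^\kappa$, unwind via $a^\kappa\eqCTT z^{\omega+1}$ and Lemmas \ref{lem:Foundation} and \ref{fact:CTT} to an $x^\omega\eqCTT x^\kappa$; apply the displayed fact to get $y^\omega$ with $x^\omega\inCTT y^\omega$; raise $y^\omega$ to $y^\kappa$ (Lemma \ref{fact:TR}); then $z^{\omega+1}(y^\omega)$ gives $y^\kappa\inCTT a^\kappa$ and $x^\omega\inCTT y^\omega$ gives $x^\kappa\inCTT y^\kappa$, both by Lemmas \ref{lem:Foundation} and \ref{fact:CTT}. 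So $a^\kappa$ witnesses $\typetrans{\ref{lt:inf}}{\kappa}$; note that $\kappa > \omega$ is needed throughout, since the construction and the applications of Lemma \ref{fact:TR} require type $\omega$ to be available below $\kappa$.

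The main obstacle is locating the right witness and seeing why it works. The obvious anxiety about letting $a^\kappa$ be all the type $\omega$ entities is that it might `$\inCTT$-contain' a surrogate for a \emph{transfinite}-rank set, which would lack a surrogate-successor among the type $\omega$ entities and so would defeat upward-closure --- in the iterative picture, a surrogate for $V_\omega$ looks like exactly such an offender. What rescues the argument is precisely the Limit$^\omega$-rule, which encodes the thought that nothing essentially new appears at type $\omega$: it promotes the routine level-by-level fact `every type $n$ entity lies in the level$^{(\kappa)}$ realized at type $n\mathord{+}1$' into the single object-language statement $\forall x^\omega\exists y^\omega\ x^\omega\inCTT y^\omega$, which is exactly the closure property $\typetrans{\ref{lt:inf}}{\kappa}$ demands. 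Everything else is routine shuttling between type-levels via Lemmas \ref{fact:TR}--\ref{lem:Foundation}, and all the type side-conditions are met because $\kappa > \omega$ and $\kappa + 2 < \tau$.
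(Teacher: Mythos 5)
Your proof is correct, and it takes exactly the route the paper gestures at for this lemma (which it explicitly leaves to the reader): the witness is the type $\kappa$ raise of a $z^{\omega+1}$ applying to all type $\omega$ entities, non-emptiness and closure are handled by shuttling between levels via \CTT-Comprehension and Lemmas \ref{fact:TR}--\ref{lem:Foundation}, and the crucial closure fact $\forall x^\omega\exists y^\omega\ x^\omega \inCTT y^\omega$ is obtained from the finite-type cases by the Limit$^\omega$-rule. Two cosmetic quibbles only: the detour through Lemma \ref{lem:CTT:strat} is unnecessary (a \CTT-Comprehension instance at type $n$ with a tautologous formula already yields an $S^{n+1}$ with $\forall v^n\ v^n \inCTT S^{n+1}$), and the existence of a type $0$ entity is supplied by the logic's nonempty domains rather than by Type-Purity.
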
\noindent
Assembling Lemmas \ref{lem:CTT:ext}--\ref{lem:CTT:inf}, we have the Sets-from-Types Theorem:
\begin{thm}\label{thm:PCTT:Zr}
    $\PCTT[\tau] \proves \typetrans{Zr}{\kappa}$ for any limit $\kappa > \omega$ with $\kappa + 2 < \tau$.
\end{thm}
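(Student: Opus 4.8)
The plan is to obtain $\typetrans{\Zr}{\kappa}$ by assembling the translated axioms established above. Recall that $\Zr$ is axiomatized, over first-order logic with identity, by \ref{ext}, the scheme \ref{sep}, and \ref{lt:strat} (this trio being $\LT$), together with \ref{lt:cre} and \ref{lt:inf}. So the theorem follows once we have two things: (i) that the translation $\typetrans{\cdot}{\kappa}$ respects first-order derivability, so that every $\Zr$-proof is mirrored by a $\PCTT[\tau]$-proof of its $\kappa$-translation, provided the translated axioms are on hand; and (ii) that $\PCTT[\tau]$ proves $\typetrans{A}{\kappa}$ for each non-logical axiom (or axiom instance) $A$ of $\Zr$.

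For (i): since $\kappa + 2 < \tau$, every quantifier relativized to type $\kappa$, and every $\inCTT$ and $\eqCTT$ occurring in a $\kappa$-translation, is well-formed in $\CTT[\tau]$. The rules \AllE{\kappa}{\kappa} and \AllI{\kappa}{\kappa}, together with the connectives, are just the ordinary first-order quantifier rules applied to type $\kappa$ variables; and by parts \eqref{eqCTT:E} and \eqref{eqCTT:equiv} of Lemma \ref{fact:CTT} the relation $\eqCTT$ restricted to type $\kappa$ is reflexive and satisfies Leibniz's Law, hence behaves as identity. So first-order reasoning carries over verbatim under translation. For (ii), the translated axioms are delivered directly: \typetrans{\ref{ext}}{\kappa} by Lemma \ref{lem:CTT:ext}; each instance of \typetrans{\ref{sep}}{\kappa} by the Separation lemma; \typetrans{\ref{lt:strat}}{\kappa} by Lemma \ref{lem:CTT:strat}; and \typetrans{\ref{lt:cre}}{\kappa} and \typetrans{\ref{lt:inf}}{\kappa} by the final two lemmas. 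The side conditions match our hypotheses exactly: Lemma \ref{lem:CTT:ext} and the Separation lemma need only $\kappa + 2 < \tau$; \typetrans{\ref{lt:cre}}{\kappa} additionally needs $\kappa$ to be a limit; and \typetrans{\ref{lt:inf}}{\kappa} needs $\kappa > \omega$, since the witness for Infinity sits at type $\omega$ and so must lie strictly below $\kappa$. Hence, for any limit $\kappa > \omega$ with $\kappa + 2 < \tau$, $\PCTT[\tau] \proves \typetrans{\Zr}{\kappa}$.

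The genuine difficulty is concentrated in Lemma \ref{lem:CTT:strat}, the simulation of the set-theoretic foundation principle \ref{lt:strat}; this is the step I would expect to be the main obstacle if one had to reconstruct everything from the axioms of $\CTT$. There one builds surrogate histories$^{(\kappa)}$ and levels$^{(\kappa)}$ using \CTT-Comprehension, uses Lemma \typetrans{\ref{lem:es:acc}}{\kappa} and Theorem \typetrans{\ref{thm:es:wellorder}}{\kappa} to see that the levels$^{(\kappa)}$ are well-ordered by $\inCTT$, and then shows, by an external induction on $\beta \leq \kappa$ using the Limit$^\beta$-rule at limit stages, that every type $\beta$ entity is a subset$^{(\kappa)}$ of some level$^{(\kappa)}$. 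The successor step relies on the auxiliary Lemma \ref{lem:CTT:subset} — any subset$^{(\kappa)}$ of a low-typed entity is itself low-typed — whose own proof runs a reductio through the Limit$^\alpha$-rule. All the remaining manoeuvres, moving entities between type-levels while preserving $\inCTT$ and $\eqCTT$, are routine applications of Lemmas \ref{fact:TR}--\ref{lem:Foundation}.
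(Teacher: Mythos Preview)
Your proposal is correct and takes essentially the same approach as the paper: the paper's proof is the single sentence ``Assembling Lemmas \ref{lem:CTT:ext}--\ref{lem:CTT:inf}, we have the Sets-from-Types Theorem'', and your argument does precisely this assembly, with the welcome addition of spelling out point (i), that the $\kappa$-translation respects first-order derivability via \AllE{\kappa}{\kappa}, \AllI{\kappa}{\kappa}, and Lemma~\ref{fact:CTT}'s Leibniz-law behaviour for $\eqCTT$. Your closing paragraph correctly locates the real work in Lemma~\ref{lem:CTT:strat} and its auxiliary Lemma~\ref{lem:CTT:subset}.
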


\subsection{Replacement, and semantic considerations}\label{s:LTandPCTTmodels}
We mentioned that \Zr sits strictly between \Zermelo and \ZF. Specifically, \Zr does not include Replacement. To settle the status of Replacement with regard to \PCTT,\footnote{This addresses \textcite[289 n.28]{LinneboRayo:HOI}.} we will move from proof theory to semantics, linking models of \PCTT with models of \LT. (Recall that \LT is the subtheory of \Zr whose axioms are \ref{ext}, \ref{sep} and \ref{lt:strat}.)

In considering models of \LT, we restrict our attention to transitive models.\footnote{All the set-theoretic facts needed in this ensuing discussion of transitive models can be found in \textcites[ch.8]{ButtonWalsh:PMT}. \emph{Notation}: We use calligraphic fonts for structures, and italics for their underlying domains; so $A$ is the domain of $\model{A}$. The definition of a transitive model is given in the model theory; so we use `$\in$', here, in the model theory, and use `$\in^\model{A}$' for $\model{A}$'s interpretation of $\in$'} Recall that a structure $\model{A}$ in the signature of set theory is \emph{transitive} iff both $(\forall x \in A)x\subseteq A$, and $(\forall a \in A)(\forall b \in A)(a \in b \liff a \in^\model{A} b)$. So membership and subsethood are absolute for transitive models. Also recall that being a (von Neumann) ordinal is absolute for transitive models,\footnote{Whenever we speak of ordinals in this subsection, we mean von Neumann ordinals.} and so is the notion of a set's (ordinal) rank. (Recall from \S\ref{s:formulatingZr} that we can define a set's rank within \LT, and hence within \Zr.) Where $\model{A}$ is a transitive model of $\LT$, let $\ordmodel{\model{A}}$ be the least ordinal not in $A$ itself. 

Whilst we consider only transitive models of \LT, we will entertain non-standard models. A transitive model $\model{A} \models \LT$ is \emph{standard} iff for any $\alpha < \ordmodel{\model{A}}$, every subset of $\Setabs{x \in A}{\rankof(x) \leq \alpha}$ is itself in $\model{A}$.

Given any model $\model{M} \models \PCTT[\tau]$ with $\kappa + 2 < \tau$, we can easily turn it into a transitive set-theoretic model, $\setmodel{\model{M}}{\kappa}$, as follows: let $\setmodel{\model{M}}{\kappa}$'s domain comprise all the type $\kappa$ entities from $\model{M}$; and let $\setmodel{\model{M}}{\kappa} \models a \in b$ iff $\model{M} \models a \inCTT b$. 
\begin{lem}\label{lem:CTT:mostowski}
    When $\kappa + 2 < \tau$: if $\model{M} \models \PCTT[\tau]$, then $\setmodel{\model{M}}{\kappa}$ is isomorphic to a unique transitive model of \LT. 
\end{lem}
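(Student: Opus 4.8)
The plan is to realise $\setmodel{\model{M}}{\kappa}$ as a genuine, well-founded, extensional set-structure that models \LT, and then to invoke Mostowski's collapse lemma to get the (unique) transitive copy. The first thing I would record is a small observation: in any model of \CTT, the defined relation $\eqCTT$ agrees with genuine identity when both arguments have the \emph{same} type. Indeed, if $\model{M}\models a^\kappa \eqCTT b^\kappa$, then applying \CTT-Comprehension to the well-formed formula $x^\kappa = a^\kappa$ gives a $z^{\kappa+1}$ with $\forall x^\kappa(z^{\kappa+1}(x^\kappa)\liff x^\kappa=a^\kappa)$; since $z^{\kappa+1}(a^\kappa)$, the definition of $\eqCTT$ yields $z^{\kappa+1}(b^\kappa)$, hence $b^\kappa = a^\kappa$ (the converse is Leibniz's Law). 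Consequently, for every set-theoretic sentence $\phi$ we have $\model{M}\models\typetrans{\phi}{\kappa}$ iff $\setmodel{\model{M}}{\kappa}\models\phi$, since the translation's only departures from a verbatim reading are `$\inCTT$' (which $\setmodel{\model{M}}{\kappa}$ takes as its interpretation of `$\in$' by definition) and `$\eqCTT$' (which, on the type $\kappa$ entities, is genuine identity). Because $\kappa+2<\tau$, Lemmas \ref{lem:CTT:ext}--\ref{lem:CTT:strat} give $\PCTT[\tau]\proves \typetrans{\ref{ext}}{\kappa}$, $\typetrans{\ref{sep}}{\kappa}$, and $\typetrans{\ref{lt:strat}}{\kappa}$, so $\setmodel{\model{M}}{\kappa}$ is a (possibly non-standard) model of \LT; in particular it is extensional.

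The heart of the argument is to show that $\in^{\setmodel{\model{M}}{\kappa}}$, i.e.\ $\inCTT$ restricted to the type $\kappa$ entities, is well-founded \emph{in the metatheory}. For each type $\kappa$ entity $a$, let $\mu(a)$ be the (externally) least $\delta\leq\kappa$ with $\model{M}\models\exists x^\delta\ a\eqCTT x^\delta$; this exists since the type-indices are ordinals, and $\delta=\kappa$ works by Lemma \ref{fact:TR}. First, $\mu(a)$ is never a nonzero limit: if $\mu(a)=\lambda$ were a limit, then $\model{M}\models\forall x^\alpha\ a\neqCTT x^\alpha$ for every $\alpha<\lambda$, so the Limit$^\lambda$-rule --- which $\model{M}$ respects --- would force $\model{M}\models\forall x^\lambda\ a\neqCTT x^\lambda$, contradicting $a\eqCTT a$. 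Second, $a\inCTT b$ implies $\mu(a)<\mu(b)$: choosing $c$ of type $\mu(b)$ with $c\eqCTT b$, Lemma \ref{fact:CTT} gives $a\inCTT c$; if $\mu(b)=0$ this contradicts Type-Base, so $\mu(b)=\gamma+1$, and then Lemma \ref{lem:Foundation} (available since $\kappa+2<\tau$) produces an $x^\gamma\eqCTT a$, whence $\mu(a)\leq\gamma<\mu(b)$. Thus any infinite $\inCTT$-descending chain of type $\kappa$ entities would induce an infinite descending sequence of ordinals, which is impossible; so $\setmodel{\model{M}}{\kappa}$ is well-founded.

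Finally, $\setmodel{\model{M}}{\kappa}$ is a well-founded, extensional structure whose domain is a set, so by Mostowski's collapse lemma there is a unique transitive structure $\model{T}$ and a unique isomorphism $\setmodel{\model{M}}{\kappa}\cong\model{T}$; since modelhood of \LT is invariant under isomorphism, $\model{T}$ is the required transitive model of \LT, and it is unique as such. I expect the well-foundedness step to be the main obstacle: it is precisely there that the Limit-rules together with Type-Founded and Type-Base do essential work --- recall from \S\ref{s:app:elementary} that dropping Type-Founded admits genuinely ill-founded models --- so the argument must be careful to deploy all of these ingredients and to keep tracking the side condition $\kappa+2<\tau$, which is what guarantees that $\inCTT$, $\eqCTT$, and Lemma \ref{lem:Foundation} are all meaningful and applicable at type $\kappa$.
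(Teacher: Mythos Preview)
Your proof is correct and follows the same strategy as the paper's: establish $\setmodel{\model{M}}{\kappa}\models\LT$ via the earlier lemmas, show extensionality and well-foundedness of $\inCTT$ using the well-ordering of the type indices together with the Limit-rules, Type-Founded and Type-Base/Type-Purity, and then apply Mostowski's Collapsing Lemma. The paper's proof is extremely compressed (it simply cites the relevant principles without defining your rank function $\mu$), so your version is effectively a careful unpacking of the same argument rather than a different route.
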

\begin{proof}
    By Lemmas \ref{lem:CTT:ext}--\ref{lem:CTT:strat}, $\setmodel{\model{M}}{\kappa} \models \LT$. The type indices are well-ordered. By the Limit-rule, Type-Founded, Type-Purity and Lemma \ref{lem:CTT:ext}, $\setmodel{\model{M}}{\kappa}$'s membership relation is extensional and well-founded. Now use Mostowski's Collapsing Lemma.
\end{proof}\noindent
We can also move in the opposite direction, from transitive models of \LT to models of \PCTT. In effect, we follow the class-semantics of \S\ref{s:illustrationclass}, but tweaked to ban urelements and to allow for non-standard models of \PCTT, where a model of \PCTT is \emph{standard} iff for any entities of any type $\alpha$ (other than the greatest) in the model, some type $\alpha\mathord{+}1$ property in the model applies exactly to those entities.\footnote{See \textcite[279n.13]{LinneboRayo:HOI}.} Still, the basic plan is simple: start with a transitive model of \LT; treat entities of different rank as being of different types; and read membership as `application'. 

Unfortunately, there is a small wrinkle in implementing this plan, thanks to an irritating mismatch between the types of \PCTT and a set's rank. To illustrate: \PCTT's Limit-rule means that every type $\omega$ entity is of some finite type, but the ordinal $\omega$ has rank $\omega$. To deal with this wrinkle, we define a function which (in effect) tells us how to map from ranks to types:
\begin{align*}
    \ordtypeshunt{\alpha} &=
    \begin{cases}
        \alpha&\text{if }\alpha < \omega\\
        \alpha+1&\text{if }\alpha \geq \omega
    \end{cases} 
\end{align*}
We can now implement our plan. Where $\model{A}$ is a transitive model of \LT, define $\typemodel{\model{A}}$ as follows. Its denizens are just the members of $A$, and if $\rankof(x) = \alpha$ then $x$ is treated as a type $\beta$ entity for all $\ordtypeshunt{\alpha} \leq \beta < \ordtypeshunt{\ordmodel{\model{A}}}$. Then, we stipulate that $\typemodel{\model{A}} \models y^{\alpha}(x^{\alpha})$ iff $\model{A} \models x \in y$. 
\begin{lem}\label{lem:Zr:transitiveiso}
    Let $\model{A} \models \LT$ be transitive. Then $\typemodel{\model{A}} \models \PCTT[\ordtypeshunt{\ordmodel{\model{A}}}]$. Moreover, $\model{A}$ is standard iff $\typemodel{\model{A}}$ is standard. 
\end{lem}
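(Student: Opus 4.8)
The plan is to check, scheme by scheme and rule by rule, that $\typemodel{\model{A}}$ validates everything in $\PCTT[\ordtypeshunt{\ordmodel{\model{A}}}]$, and then to read the \emph{standard}-correspondence off the same bookkeeping. The single observation that makes all of this go through is that $\ordtypeshunt{}$ has been rigged precisely so that, for each type-index $\alpha < \ordtypeshunt{\ordmodel{\model{A}}}$, the type $\alpha$ entities of $\typemodel{\model{A}}$ form an honest \emph{set} of $\model{A}$ --- namely the level $\{x \in A : \rankof(x) \leq \alpha\}$ when $\alpha < \omega$ and the level $\{x \in A : \rankof(x) < \alpha\}$ when $\alpha \geq \omega$ --- and, crucially, that any subset (in $\model{A}$) of the type $\alpha$ entities is itself a type $\alpha+1$ entity of $\typemodel{\model{A}}$. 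That such levels are present in $A$ for each relevant rank is part of what $\model{A} \models \LT$ buys us, via \ref{lt:strat} and the well-ordering of levels from Theorem \ref{thm:es:wellorder}; and since application in $\typemodel{\model{A}}$ is interpreted as $\in^{\model{A}}$, truth of a $\CTT$-formula in $\typemodel{\model{A}}$ just tracks truth of its ``flattening'' in $\model{A}$.

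First I would dispatch the structural and logical requirements. Monotonicity of $\ordtypeshunt{}$ gives that every type $\alpha$ entity is a type $\beta$ entity whenever $\alpha \leq \beta$, which is exactly what the soundness of $\AllE{\beta}{\alpha}$ and $\AllI{\beta}{\alpha}$ needs. For Limit$^\lambda$ one uses the elementary fact that $\ordtypeshunt{\delta}$ is never a limit ordinal (it is either $\delta < \omega$ or a successor $\delta+1$); hence if $x$ is a type $\lambda$ entity then $\ordtypeshunt{\rankof(x)} < \lambda$, so $x$ is already a type $\gamma$ entity for some $\gamma < \lambda$. Type-Purity is immediate, since the unique set of rank $0$ is $\emptyset$, so there is exactly one type $0$ entity. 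For $\CTT$-Comprehension at a type $\alpha$ (which is only demanded when the type $\alpha+1$ also exists, hence when the level of type $\alpha$ entities lies in $A$), I would apply \ref{sep} inside $\model{A}$ to that level to obtain the set $E$ of its members satisfying $\phi$; by the key observation $E$ is a type $\alpha+1$ entity, and since application is $\in^{\model{A}}$ this gives $\forall x^\alpha(E(x^\alpha) \liff \phi(x^\alpha))$, as required.

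Next come the extensional principles. Here I would first record the background fact that, in $\typemodel{\model{A}}$, $a^\alpha \eqCTT b^\beta$ holds iff $a$ and $b$ are literally the same member of $A$: the right-to-left direction is trivial, and for left-to-right one uses \ref{lt:strat} to produce, whenever $a \neq b$, a level of $\model{A}$ containing exactly one of $a,b$ and of rank low enough to count as a type $\max(\alpha,\beta)+1$ entity --- the relevant rank bound is again exactly what $\ordtypeshunt{}$ guarantees, and the level lies in $A$ because $a^\alpha \eqCTT b^\beta$ is only well-formed when $\max(\alpha,\beta)+1$ is a type. Given this, Type-Ext reduces to \ref{ext} in $\model{A}$, and Type-Founded follows either from the well-foundedness of $\in^{\model{A}}$ or, as the paper notes, from Type-Ext together with Type-Purity. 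This completes the verification that $\typemodel{\model{A}} \models \PCTT[\ordtypeshunt{\ordmodel{\model{A}}}]$.

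Finally, the \emph{standard}-equivalence falls out of the same analysis: unwinding the definition, $\typemodel{\model{A}}$ is standard iff for every non-top type $\alpha$ and every collection $S$ of type $\alpha$ entities there is a type $\alpha+1$ entity applying to exactly the members of $S$; via the identification of application with $\in^{\model{A}}$ and the key observation, this says exactly that every subset of $\{x \in A : \rankof(x) \leq \alpha\}$ (respectively $<\alpha$) lies in $A$, which is precisely the condition that $\model{A}$ be standard. I expect the only real friction to be the relentless bookkeeping with $\ordtypeshunt{}$: in each case (Comprehension, Type-Ext, the $\eqCTT$-fact) one must check that the set produced inside $\model{A}$ actually sits at the intended type, and that the edge case in which $\ordmodel{\model{A}}$ is a successor --- so that $\typemodel{\model{A}}$ has a top type --- never forces an appeal to a type that does not exist. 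These are exactly the places where the otherwise mysterious $+1$ in the definition of $\ordtypeshunt{}$ earns its keep.
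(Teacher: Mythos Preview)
Your approach is essentially the paper's: verify the rules and axiom schemes one by one, using \ref{sep} on a suitable level to obtain Comprehension witnesses, and read standardness off the rank/type correspondence. Indeed your sketch is more explicit than the paper's in several places (the Limit$^\lambda$ argument and the standardness clause especially).

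There is one slip. In establishing that $\eqCTT$ collapses to literal identity, you propose to separate $a \neq b$ by finding a \emph{level} of $\model{A}$ containing exactly one of them. That will not work when $\rankof(a) = \rankof(b)$: levels are cumulative, so any level containing one contains the other. The paper instead uses the singleton $\{a\}$, which lies in $A$ (by \ref{sep} applied to a level containing $a$) and has rank $\rankof(a)+1$, hence sits at type $\leq \alpha+1 \leq \max(\alpha,\beta)+1$ as required. With that correction your argument goes through unchanged. (You also omit Type-Base, but it is immediate once you have noted that the unique type $0$ entity is $\emptyset$.)
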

\begin{proof}[Proof sketch]
    The quantifier-rules and Limit-rules are obviously sound. When $\eqCTT$ and $\inCTT$ are well-defined \CTT[\ordtypeshunt{\ordmodel{\model{A}}}]-expressions,\footnote{Recall from footnote \ref{fn:kappa+2explanation} that $x^\alpha \inCTT y^\beta$ is a \CTT[\tau]-formula iff $\max(\alpha, \beta) + 2 < \tau$.} distinct entities $a^{\alpha}$ and $b^\beta$ are distinguished by $\{a^\alpha\}^{\alpha+1}$, so $\eqCTT^{\typemodel{\model{A}}}$ is identity and $\inCTT^{\typemodel{\model{A}}}$ is membership. Type-Base and Type-Purity now hold, as $\model{A}$ has exactly one rank-$0$ object, and it is empty. Type-Ext follows from \ref{ext} and simple reasoning about ranks. For \CTT-Comprehension, fix $\phi$ and $\alpha$ with $\alpha+1 < \ordtypeshunt{\ordmodel{\model{A}}}$; we will  show that: 
        $$\typemodel{\model{A}} \models \exists z^{\alpha+1} \forall x^\alpha(z^{\alpha+1}(x^\alpha) \liff \phi(x^\alpha))$$
    Let $\ordtypeshunt{\beta} = {\alpha+1}$; note that $\beta \in A$, as $\model{A}$ is transitive. Fix $s \in A$ such that $\model{A}$ thinks that $s$ is the $\in$-least level with $\beta$ as a subset. By \ref{sep} on $s$ in $\model{A}$, there is some $c \in A$ of rank $\leq \beta$ which serves as a witnessing value for $z^{\alpha+1}$ when regarded as an entity of type $\alpha+1$. Finally, the remark about standardness is immediate from the construction.
\end{proof}\noindent
We now have the means to move between transitive models of \LT and models of \PCTT. Recalling that \LT is strictly weaker than \ZF, we can now settle the status of Replacement, in \PCTT, by using some well-known facts concerning models of \ZF: 
\begin{thm}\label{thm:ReplacementStatus}
    Fix $\kappa > \omega$ such that $\kappa + 2 < \tau$:
    \begin{listn-0}
      \item\label{k:strongin:rep} If $\kappa$ is strongly inaccessible, every model of \PCTT[\tau] satisfies \typetrans{\ZF}{\kappa}.
      \item\label{k:notstrongin:fails} If $\kappa$ is not strongly inaccessible, there are models of \PCTT[\tau] which violate \typetrans{\ZF}{\kappa}, and any model of \PCTT[\tau] which satisfies \typetrans{\ZF}{\kappa} is non-standard.
     \end{listn-0}
\end{thm}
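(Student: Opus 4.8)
The plan is to route everything through the dictionary between models of \PCTT[\tau] and transitive models of \LT given by Lemmas~\ref{lem:CTT:mostowski} and~\ref{lem:Zr:transitiveiso}. Since $\ZF = \LT + {}$Replacement and the translation $\typetrans{\cdot}{\kappa}$ commutes with satisfaction, $\model{M} \models \typetrans{\ZF}{\kappa}$ iff the transitive model $\setmodel{\model{M}}{\kappa} \models \LT$ (the Mostowski collapse of the type-$\kappa$ entities of $\model{M}$ under $\inCTT$) satisfies Replacement. Two further observations set up the case split. First, using the Sets-from-Types machinery together with the Limit-rules and Type-Founded, $\setmodel{\model{M}}{\kappa}$ has ordinal height exactly $\kappa$: it contains every ordinal $< \kappa$, but nothing of rank $\kappa$, since a rank-$\kappa$ entity would have to sit at type $\ordtypeshunt{\kappa} = \kappa+1$, not at type $\kappa$. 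Hence $\setmodel{\model{M}}{\kappa}$ is a transitive model of \LT included in $V_\kappa$; and by Lemma~\ref{lem:Zr:transitiveiso} it equals $V_\kappa$ exactly when $\model{M}$ is standard, because a standard transitive model of \LT of height $\kappa$ must contain $\powerset(V_\alpha) = V_{\alpha+1}$ for every $\alpha < \kappa$, hence must be all of $V_\kappa$.

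For \eqref{k:strongin:rep}, fix a strongly inaccessible $\kappa$ and any $\model{M} \models \PCTT[\tau]$, and write $\model{A} = \setmodel{\model{M}}{\kappa} \subseteq V_\kappa$. Given $a \in \model{A}$ and an $\model{A}$-definable function $F$ with domain $a$: since $\kappa$ is a strong limit, $|a| < \kappa$; since each $F(x) \in \model{A} \subseteq V_\kappa$ and $\kappa$ is regular, $\gamma := \sup_{x\in a}\rankof(F(x)) < \kappa$, so $F[a]$ is a definable sub-collection of $\model{A}$ whose members all have rank $\le \gamma$ (with $\gamma + 2 < \kappa + 2 < \tau$). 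Pulling this back through the isomorphism, $F[a]$ is a definable sub-collection of type-$(\gamma{+}1)$ entities of $\model{M}$, so \CTT-Comprehension at type $\gamma+1$ produces a type-$(\gamma{+}2)$ entity coding it, and Lemma~\ref{fact:TR} raises this to a type-$\kappa$ entity; hence $F[a] \in \setmodel{\model{M}}{\kappa} = \model{A}$. So $\model{A} \models {}$Replacement and $\model{M} \models \typetrans{\ZF}{\kappa}$. (Both clauses of strong inaccessibility are used exactly once: the strong-limit clause to bound $|a|$, regularity to bound the rank of $F[a]$.)

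For \eqref{k:notstrongin:fails}, suppose $\kappa$ is not strongly inaccessible. If $\model{M}$ is standard then, as noted above, $\setmodel{\model{M}}{\kappa} = V_\kappa$, and by the standard analysis of which rank-initial segments $V_\kappa$ satisfy $\ZF$, $V_\kappa \not\models \ZF$; so $\model{M} \not\models \typetrans{\ZF}{\kappa}$, which gives the claim that any model satisfying $\typetrans{\ZF}{\kappa}$ is non-standard. For the existence claim, build a transitive model $\model{B} \models \LT$ of some height $\mu$ with $\ordtypeshunt{\mu} > \tau$, arranged so that its rank-$<\kappa$ part $\model{B} \cap V_\kappa$ is a transitive model of \LT of height $\kappa$ in which Replacement fails --- when $\kappa$ is singular, put a code for a cofinal map $\operatorname{cf}(\kappa) \to \kappa$ into $\model{B}$ to witness the failure; when $\kappa$ is not a strong limit, use a member of $V_\kappa$ whose powerset has size $\ge \kappa$ --- and then $\typemodel{\model{B}}$, restricted to types $< \tau$, is by Lemma~\ref{lem:Zr:transitiveiso} a model of \PCTT[\tau] whose type-$\kappa$ part is $\model{B} \cap V_\kappa$, so it violates $\typetrans{\ZF}{\kappa}$. (When $\kappa$ is not even worldly one may simply take $\model{B}$ standard, since then $V_\kappa$ itself already fails Replacement.)

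The main obstacle is the bookkeeping forced by the type--rank mismatch encoded in $\ordtypeshunt{}$. One has to be sure that $\setmodel{\model{M}}{\kappa}$ really has ordinal height exactly $\kappa$, so that the known facts about $V_\kappa$ apply; and in \eqref{k:strongin:rep} one has to be sure that the image $F[a]$ lands back inside the (possibly non-standard) model $\model{A}$, which is precisely what \CTT-Comprehension at a type well below $\kappa$ followed by Type-Raising secures, and where the two halves of strong inaccessibility are each needed. Once these points are in hand, the constructions in \eqref{k:notstrongin:fails} are a routine application of Lemma~\ref{lem:Zr:transitiveiso} to suitable transitive models of \LT.
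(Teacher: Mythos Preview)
Your overall strategy---route everything through the dictionary of Lemmas~\ref{lem:CTT:mostowski} and~\ref{lem:Zr:transitiveiso} and then invoke known facts about which $V_\kappa$ satisfy \ZF---is exactly the paper's, and your preliminary observation that the Mostowski collapse of $\setmodel{\model{M}}{\kappa}$ has ordinal height exactly $\kappa$ is the right piece of bookkeeping.

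For \eqref{k:strongin:rep} you differ from the paper only in the final step. Having bounded the rank of the replacement-image by some $\gamma<\kappa$, the paper stays inside the transitive collapse $\model{A}$ and applies \ref{sep} (already established as part of \typetrans{\Zr}{\kappa}) to a level above $\gamma$; you instead pull back to $\model{M}$, invoke \CTT-Comprehension at a low type, and then Type-Raise. Both are correct; the paper's route is marginally cleaner, since \typetrans{\ref{sep}}{\kappa} has already packaged that Comprehension-then-raise work once and for all.

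For the existence clause of \eqref{k:notstrongin:fails} you have overcomplicated matters. The paper simply records two blanket facts---$V_\kappa \models \ZF$ iff $\kappa$ is strongly inaccessible, and $V_\kappa \models \LT$ iff $\kappa > 0$---and then takes the \emph{standard} model $\typemodel{V_\sigma}$ for any $\sigma$ with $\ordtypeshunt{\sigma} \geq \tau$: by construction $\setmodel{\typemodel{V_\sigma}}{\kappa} = V_\kappa \not\models$ Replacement, and that is all. Your case-split on worldliness and the attempt to thread a ``code for a cofinal map $\operatorname{cf}(\kappa)\to\kappa$'' into a bespoke $\model{B}$ are not needed on the paper's approach; and as written the cofinal-map idea is doubtful anyway, since such a map has rank $\geq\kappa$, hence lives at type $\ordtypeshunt{\kappa}>\kappa$, and so is not available as a parameter in any instance of \typetrans{Replacement}{\kappa}, all of whose variables carry the superscript $\kappa$. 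The non-standardness clause is handled exactly as you do it, by contraposition together with the same blanket fact.
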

\begin{proof}
    \emphref{k:strongin:rep}     
    Let $\kappa$ be strongly inaccessible with $\model{M} \models \PCTT[\tau]$. Using Theorem \ref{thm:PCTT:Zr} and Lemma \ref{lem:CTT:mostowski}, obtain a transitive model $\model{A} \isomorphic \setmodel{\model{M}}{\kappa} \models \Zr$. To show that $\model{M} \models \typetrans{\ZF}{\kappa}$, it suffices to show that $\model{A} \models \text{Replacement}$. So: fix $a \in A$ and suppose $\model{A} \models (\forall x \in a)\exists!y \phi(x,y)$. Working outside $\model{A}$, let 
        $$\beta = \sup\Setabs{\rankof(c)}{\model{A} \models (\exists x \in a)\phi(x,c)}$$ 
    Since $\model{A}$ is transitive and $\kappa$ is strongly inaccessible, $\beta \in A$. So also $\Setabs{c}{\model{A} \models (\exists x \in a)\phi(x,c)} \in A$, by Separation in $\model{A}$ on what $\model{A}$ thinks is a level with $\beta$ as a subset. 
    
    \emphref{k:notstrongin:fails} 
    Here are two general facts about the $V_\alpha$ hierarchy:
    \begin{align*}
        V_\kappa \models \ZF&\text{ iff }\kappa\text{ is strongly inaccessible}\\
        V_{\kappa}\models \LT&\text{ iff }\kappa > 0
    \end{align*}
    So suppose $\kappa > \omega $ is not strongly inaccessible. Fix $\sigma$ such that $\ordtypeshunt{\sigma} \geq \tau$. Now, $V_\sigma$ is a transitive model of \LT and $\ordmodel{V_\sigma} = \sigma$. Using Lemma \ref{lem:Zr:transitiveiso}, obtain $\typemodel{V_{\sigma}} \models \PCTT[\tau]$. By construction, $\setmodel{\typemodel{V_{\sigma}}}{\kappa} = V_{\kappa}$. Since $V_{\kappa} \nmodels \text{Replacement}$, also $\typemodel{V_{\sigma}} \nmodels \typetrans{Replacement}{\kappa}$. 
    
    For the second clause: suppose $\model{M} \models \PCTT[\tau]$ and $\model{M} \models \typetrans{\ZF}{\kappa}$, with $\kappa$ not strongly inaccessible. Use Lemma \ref{lem:CTT:mostowski} to obtain a unique transitive model $\model{A} \isomorphic \setmodel{\model{M}}{\kappa}$. Since $\model{A} \models \text{\ZF}$ but $\ordmodel{\model{A}}$ is not strongly inaccessible, $\model{A}$ is non-standard. 
\end{proof}

\section{Definitional equivalence for \ensuremath{\clearme{CTT}^{\omega}}}
\label{equivalence:CTT}
In \S\ref{conceptual:equivalence}, we stated that \CTT[\omega] is definitionally equivalent to \STTu. In this appendix, we define \STTu, and prove the equivalence.

\STTu augments \STT with a new function symbol, $\uptype$, for each $n$, which takes a type $n$ entity as input and outputs a type $n\mathord{+}1$ entity.\footnote{As with the signs $=$, $\eqCTT$ or $\inCTT$, we are using the same symbol (in a typically ambiguous way) for each type level.} So, for example, $b^2(\uptype a^0)$ and $c^5(\uptype\uptype b^2)$ are well-formed. \STTu retains \STT-Comprehension; this holds for formulas containing $\uptype$. \STTu then has axioms ensuring that $\uptype$ is injective, preserves property-possession, and delivers well-foundedness:
\begin{listbullet}
    \item[\emph{Up-Inject.}] $\forall x^n \forall y^n(\uptype x^n = \uptype y^n \lonlyif x^n = y^n)$
    \item[\emph{Up-Possess.}] $\forall x^n \forall y^{n+1}(\uptype y^{n+1}(\uptype x^n) \liff y^{n+1}(x^n))$
    \item[\emph{Up-Founded.}] 
    $\forall x^{n+1}\forall y^{n+1}(\uptype y^{n+1}(x^{n+1}) \lonlyif \exists z^n\ x^{n+1} = \uptype z^n)$   
    \item[\emph{Up-Base.}] $\forall x^0\forall y^0 \lnot \uptype y^0(x^0)$
\end{listbullet}
For readability, where $m > n$, we write $\uptypeto[n]{m}a^n$ for the result of applying $m - n$ instances of $\uptype$ to $a^n$, yielding a type $m$ entity; so $\uptypeto[0]{4} a^0$ abbreviates $\uptype\uptype\uptype\uptype a^0$, and  $c^5(\uptypeto[2]{4}b^2)$ abbreviates $c^5(\uptype\uptype b^2)$. Simple induction, which we leave to the reader, shows that \STTu proves generalizations of our new axioms; specifically, for each $m > n$: 
\begin{listbullet}
    \item $\forall x^n \forall y^n(\uptypeto[n]{m} x^n = \uptypeto[n]{m} y^n \lonlyif x^n = y^n)$
    \item $\forall x^n \forall y^{n+1}(\uptypeto[n+1]{m+1} y^{n+1}(\uptypeto[n]{m} x^n) \liff y^{n+1}(x^n))$
    \item $\forall x^{m}\forall y^{n+1}(\uptypeto[n+1]{m+1}y^{n+1}(x^{m}) \lonlyif \exists z^n\ x^{m} = \uptypeto[n]{m}z^n)$
    \item $\forall x^n\forall y^0 \lnot \uptypeto[0]{n+1} y^0(x^n)$.
\end{listbullet}
To prove that \STTu and \CTT[\omega] are definitionally equivalent (Theorem \ref{thm:CTTomega:DE}), we first define an interpretation, $I$, from \CTT[\omega] to \STTu. This preserves the interpretation of all logical symbols, including $=$; its only non-trivial action is as follows:\footnote{So: $[x^n = y^n]^I \coloneq x^n = y^n$; $[\phi \land \psi]^I \coloneq (\phi^I \land \psi^I)$; $[\lnot \phi]^I \coloneq \lnot \phi^I$; and $[\forall x^n\phi]^I \coloneq \forall x^n \phi^I$.}
\begin{align*}
    [y^n(x^m)]^I &\coloneq y^n(\uptypeto[m]{n-1} x^m)
\end{align*}
Observe that if $n = m + 1$, then $[y^n(x^m)]^I$ is just $y^n(x^m)$. Here is a very simple fact about the relationship between $\uptype$ and the interpretations of $\inCTT$ and $\eqCTT$, which holds just by unpacking some definitions (the proof is left to the reader):
\begin{lem}\label{lem:sttu:upisneat}
    Where $i = \max(m, n)$, \STTu proves: 
    \begin{listn-0}
        \item\label{sttu:eqCTT} $[x^m \eqCTT y^n]^I \liff \uptypeto{i} x^m = \uptypeto{i}y^n$ 
        \item\label{sttu:inCTT} $[x^m \inCTT y^n]^I \liff \uptypeto{i+1}y^n(\uptypeto{i}x^m)$
    \end{listn-0}
\end{lem}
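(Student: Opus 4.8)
The plan is to prove both claims by mechanically unpacking the definitions of $\eqCTT$ and $\inCTT$, pushing the interpretation $I$ through the logical connectives and quantifiers (which it preserves by stipulation), and then appealing to \STT's identity scheme at the appropriate type level. No induction is needed: I would establish the first claim outright, and then obtain the second from the first applied one type level higher. Throughout, fix $m,n$, write $i = \max(m,n)$ --- so the $\eqCTT$ and $\inCTT$ occurring in $x^m \eqCTT y^n$ and $x^m \inCTT y^n$ are all indexed by $\gamma = i+1$ --- and read $\uptypeto{i}a^i$ as $a^i$ when no raising is called for.

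\emph{First claim.} By definition $x^m \eqCTT y^n$ is $\forall z^{i+1}(z^{i+1}(x^m) \liff z^{i+1}(y^n))$; since $[z^{i+1}(x^m)]^I = z^{i+1}(\uptypeto[m]{i}x^m)$ and similarly for $y^n$, applying $I$ gives
$$[x^m \eqCTT y^n]^I \;=\; \forall z^{i+1}\bigl(z^{i+1}(\uptypeto[m]{i}x^m) \liff z^{i+1}(\uptypeto[n]{i}y^n)\bigr).$$
Both $\uptypeto[m]{i}x^m$ and $\uptypeto[n]{i}y^n$ are \STTu-terms of type $i$, so instantiating \STT's identity scheme at type $i$ with these terms shows this is equivalent to $\uptypeto[m]{i}x^m = \uptypeto[n]{i}y^n$, i.e.\ to $\uptypeto{i}x^m = \uptypeto{i}y^n$.

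\emph{Second claim.} By definition $x^m \inCTT y^n$ is $\exists z^{i+1}(z^{i+1} \eqCTT y^n \land z^{i+1}(x^m))$, so applying $I$ yields $\exists z^{i+1}\bigl([z^{i+1} \eqCTT y^n]^I \land z^{i+1}(\uptypeto[m]{i}x^m)\bigr)$. The types involved in $z^{i+1} \eqCTT y^n$ are $i+1$ and $n$, whose maximum is $i+1$ because $i \geq n$; so the first claim gives $[z^{i+1} \eqCTT y^n]^I \liff z^{i+1} = \uptypeto[n]{i+1}y^n$. Substituting, and then eliminating the existential by the substitutivity of $=$, we get $[x^m \inCTT y^n]^I \liff (\uptypeto[n]{i+1}y^n)(\uptypeto[m]{i}x^m)$, which is the displayed $\uptypeto{i+1}y^n(\uptypeto{i}x^m)$.

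The one step needing a word of justification is this last use of substitutivity of $=$ into predicate position, i.e.\ passing from $\exists z^{i+1}(z^{i+1} = \uptypeto[n]{i+1}y^n \land z^{i+1}(\uptypeto[m]{i}x^m))$ to $(\uptypeto[n]{i+1}y^n)(\uptypeto[m]{i}x^m)$: this follows from \STT's identity scheme together with \STT-Comprehension, by exactly the argument used for Lemma \ref{fact:CTT}. Otherwise there is no real obstacle --- which is presumably why the authors leave the lemma to the reader. The only things to watch are the type-index bookkeeping (checking each intermediate formula is a well-formed \STTu- or \CTT[\omega]-expression and that $\max(i+1,n) = i+1$), the convention for $\uptypeto{i}a^i$, a routine renaming of the bound variable in the definitions of $\eqCTT$ and $\inCTT$ so it does not clash with $x^m$ or $y^n$, and keeping the dependence one-directional (the second claim rests on the first, not conversely). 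None of the \STTu-specific axioms (Up-Inject, Up-Possess, Up-Founded, Up-Base) are used here; they enter only in the definitional-equivalence theorem this lemma feeds.
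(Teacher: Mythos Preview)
Your proposal is correct and is exactly the kind of definition-unpacking argument the paper has in mind (it explicitly says the lemma ``holds just by unpacking some definitions'' and leaves the proof to the reader). Your bookkeeping is right, including the observation that $\max(i+1,n)=i+1$ and the appeal to \STT-Comprehension to justify substitutivity into predicate position.
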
\noindent
We now have one substantial result:
\begin{lem}\label{lem:AllRules:Admissible}
    The $I$-interpretations of \AllE{n}{m} and \AllI{n}{m} are admissible in \STTu
\end{lem}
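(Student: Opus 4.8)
The plan is to reduce the admissibility of the $I$-images of $\AllE{n}{m}$ and $\AllI{n}{m}$ to \STT's own quantifier rules $\AllE{n}{}$ and $\AllI{n}{}$, by way of a single \emph{neatness lemma}. Fix a \CTT[\omega]-formula $\psi$ with a distinguished free variable $v$, and write $\psi(s)$ for the result of putting the term $s$ into that position. The neatness lemma says: whenever $m \leq n$ and both $\psi(x^n)$ and $\psi(t^m)$ are well-formed,
$$\STTu \proves [\psi(x^n)]^I[x^n := \uptypeto[m]{n} t^m] \liff [\psi(t^m)]^I.$$
The hypothesis here is exactly what is secured by the proviso ``all expressions are well-formed'' when a \CTT-rule is applied.

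I would prove the neatness lemma by induction on $\psi$, the only work being the atomic cases. Where $v$ occurs as a right-hand argument, within some $y^k(v)$ with $k > n$, the two sides reduce to the very same \STTu-formula, since climbing from type $m$ to type $n$ and then on to type $k-1$ by repeated $\uptype$ is just climbing from $m$ to $k-1$. Where $v$ occurs as the functional term, within some $v(z^j)$ with $n > j$ --- hence also $m > j$, by well-formedness of $\psi(t^m)$ --- the two sides are $(\uptypeto[m]{n} t^m)(\uptypeto[j]{n-1} z^j)$ and $t^m(\uptypeto[j]{m-1} z^j)$, provably equivalent by the generalized form of Up-Possess, instantiated so that $t^m$ is raised from type $m$ to type $n$ and $\uptypeto[j]{m-1} z^j$ from type $m-1$ to type $n-1$. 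An atomic subformula of the form $v = w$ forces $m = n$, since $w$ has a fixed type, so the equivalence is trivial; $\eqCTT$ and $\inCTT$ are handled by unpacking their definitions (or by Lemma \ref{lem:sttu:upisneat}); and connectives and quantifiers are routine, $v$ staying free throughout.

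With the neatness lemma in hand the result is quick. For $\AllE{n}{m}$: since $[\forall x^n \phi(x^n)]^I = \forall x^n [\phi(x^n)]^I$, \STT's $\AllE{n}{}$ gives $[\phi(x^n)]^I[x^n := \uptypeto[m]{n} a^m]$, which the neatness lemma shows to be \STTu-equivalent to $[\phi(a^m)]^I$. For $\AllI{n}{m}$: from an \STTu-derivation of $[\phi(b^n)]^I$ out of the $I$-images of assumptions not containing $b^n$, the \STT-eigenvariable condition is met, so $\AllI{n}{}$ yields $\forall x^n [\phi(x^n)]^I$; then $\AllE{n}{}$ yields $[\phi(x^n)]^I[x^n := \uptypeto[m]{n} y^m]$ for a fresh $y^m$; the neatness lemma rewrites this as $[\phi(y^m)]^I$; and $\AllI{m}{}$ delivers $\forall x^m [\phi(x^m)]^I = [\forall x^m \phi(x^m)]^I$. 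The existential rules go through identically. The only real obstacle is bookkeeping inside the neatness lemma --- phrasing it so that the \CTT-rules' well-formedness proviso yields precisely the side-condition $m > j$ in the functional-occurrence case, and keeping the towers of $\uptype$'s aligned --- and it is worth noting that only the generalized Up-Possess is used here, with Up-Inject, Up-Founded and Up-Base playing no role in this lemma.
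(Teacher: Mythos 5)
Your proposal is correct and follows essentially the same route as the paper: the paper likewise handles the interpreted \AllE{n}{m} by instantiating $\forall x^n\phi^I(x^n)$ at the raised term $\uptypeto[m]{n}a^m$ and then using generalized Up-Possess (plus the syntactic collapse of stacked $\uptype$'s) to convert the result into $\phi^I(a^m)$, and handles \AllI{n}{m} by generalizing at type $n$, instantiating down to type $m$, and re-generalizing at type $m$. Your ``neatness lemma'' is just a tidier, induction-on-formulas packaging of the substitution argument the paper runs inline on an illustrative formula shape, and your observation that only generalized Up-Possess is needed matches the paper's proof.
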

\begin{proof}
    We start with \AllE{n}{m}. Suppose that both $\phi(x^n)$ and $\phi(a^m)$ are well-formed in \CTT[\omega]. Working in \STTu, suppose $\forall x^n\phi^I(x^n)$. If $m = n$, then $\phi^I(a^m)$ follows by ordinary $\forall$E in \STTu. So consider the case when $m < n$. The variable $x^n$ cannot occur in any identity-claim, e.g.\ $x^n = c^n$, since $a^m = c^n$ is ill-formed in \CTT[\omega]; so $\phi$ must have this kind of shape (illustratively):
        $$\psi(x^n(v^i), \ldots, y^{k}(x^n), \ldots)$$
    with $i < m < n < k$; note that $i < m$, since $\phi(a^m)$ is well-formed in \CTT[\omega]. Now $\phi^I$ is: 
        $$\psi(x^n(\uptypeto[i]{n-1}v^{i}), \ldots  y^k(\uptypeto[n]{k-1}x^n), \ldots)$$
    Using \STTu's rule $\forall$E$^n$, we can infer $\phi^I(\uptypeto[m]{n} a^m)$, i.e.:
        $$\psi(\uptypeto[m]{n} a^m(\uptypeto[i]{n-1}v^{i}), \ldots y^k(\uptypeto[n]{k-1}\uptypeto[m]{n} a^m), \ldots)$$
    Simplifying, and using generalized Up-Possess, we obtain:
        $$\psi(a^m(\uptypeto[i]{m-1}v^{i}), \ldots y^k(\uptypeto[m]{k-1} a^m), \ldots)$$
    which is precisely $\phi^I(a^m)$, as required. 
    
    The admissibility of \AllI{n}{m} under interpretation follows straightforwardly. Given $\phi^I(b^n)$, with $b^n$ suitably arbitrary: infer $\forall x^n\phi^I(x^n)$ using \STTu's rule $\forall$I$^n$; with $a^m$ suitably arbitrary, infer $\phi^I(a^m)$ using \AllI{n}{m} under interpretation; finally, infer $\forall x^m\phi^I(x^m)$ using \STTu's rule $\forall$I$^m$.  
\end{proof}\noindent
It is now easy to prove that $I$ is an interpretation:
\begin{lem}\label{lem:from-CTTomega-to-STTu}
    $I : \CTT[\omega] \functionto \STTu$ is an interpretation.
\end{lem}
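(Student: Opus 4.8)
The plan is to verify the three things an interpretation between these many-sorted theories must do: that $I$ sends well-formed $\CTT[\omega]$-formulas to well-formed $\STTu$-formulas, that it commutes with the connectives and quantifiers (and with $=$), and that $\STTu \proves \phi^I$ whenever $\CTT[\omega] \proves \phi$. The second of these is immediate, since $I$ is stipulated to preserve every logical symbol and to act non-trivially only on atomic predications. Well-definedness is a routine induction on the construction of $\phi$; the only clause worth pausing on is the atomic one, $[y^n(x^m)]^I \coloneq y^n(\uptypeto[m]{n-1}x^m)$, and since $y^n(x^m)$ is well-formed in $\CTT[\omega]$ exactly when $n > m$, the term $\uptypeto[m]{n-1}x^m$ is a well-formed type $n\mathord{-}1$ term of $\STTu$, so $y^n(\uptypeto[m]{n-1}x^m)$ is a well-formed atomic $\STTu$-formula (and when $n = m+1$ it is simply $y^n(x^m)$, so $I$ restricts to the identity on $\STT$-formulas).

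For the implication $\CTT[\omega]\proves\phi \Rightarrow \STTu\proves\phi^I$ I would argue by induction on $\CTT[\omega]$-derivations. Because $I$ commutes with the logical apparatus, every purely logical axiom of classical predicate logic translates to an instance of the same axiom in $\STTu$, and rules such as modus ponens are preserved outright. The one genuinely non-standard feature of $\CTT[\omega]$'s calculus is its cross-type quantifier rules $\AllE{n}{m}$ and $\AllI{n}{m}$ (together with their $\exists$-duals, which reduce to these), and Lemma \ref{lem:AllRules:Admissible} is exactly the statement that their $I$-translations are admissible in $\STTu$. No infinitary Limit-rule enters the picture, since there is no limit type strictly below $\omega$, so $\CTT[\omega]$ is a finitary calculus and the induction is unproblematic. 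Hence it remains only to check that $\STTu$ proves the $I$-translation of each non-logical axiom-scheme of $\CTT[\omega]$.

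There are four such schemes. For $\CTT$-Comprehension, note that $[z^{n+1}(x^n)]^I$ is just $z^{n+1}(x^n)$ (the $\uptype$-tower $\uptypeto[n]{n}$ being empty), so $[\exists z^{n+1}\forall x^n(z^{n+1}(x^n)\liff\phi(x^n))]^I$ is $\exists z^{n+1}\forall x^n(z^{n+1}(x^n)\liff\phi^I(x^n))$; since $I$ introduces no new free variables, $\phi^I$ still omits $z^{n+1}$, and it is a legitimate $\STTu$-formula (possibly containing $\uptype$), so this is an instance of $\STTu$'s Comprehension scheme. Likewise $[x^n = y^n \liff \forall z^{n+1}(z^{n+1}(x^n)\liff z^{n+1}(y^n))]^I$ is, after the same simplification, literally an instance of the $\STT$ identity scheme that $\STTu$ inherits. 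For Type-Founded and Type-Base the idea is to unpack the $I$-images of $\inCTT$ and $\eqCTT$ via Lemma \ref{lem:sttu:upisneat} and then invoke the generalized forms of Up-Founded, Up-Inject and Up-Base: $[a^m \inCTT b^{n+1}]^I$ becomes $\uptypeto{j+1}b^{n+1}(\uptypeto{j}a^m)$ with $j = \max(m, n+1)$, to which generalized Up-Founded applies directly, yielding some $z^n$ with $\uptypeto{j}a^m = \uptypeto{j}z^n$; an application of Up-Inject brings this down to $\uptypeto{k}a^m = \uptypeto{k}z^n$ with $k = \max(m,n)$, which by Lemma \ref{lem:sttu:upisneat}\eqref{sttu:eqCTT} is $[a^m \eqCTT z^n]^I$, the translated conclusion of Type-Founded. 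Similarly $[y^m \inCTT x^0]^I$ reduces to $\uptypeto[0]{m+1}x^0(y^m)$, which generalized Up-Base directly refutes. The one place where the argument is more than bookkeeping is the index arithmetic in these last two cases: one has to track how $\max(m,n)$ meshes with the $\uptype$-towers so that the generalized axioms can be instantiated verbatim — e.g.\ distinguishing $j = k$ from $j = k+1$ in the Type-Founded step — but with the quantifier rules already discharged by Lemma \ref{lem:AllRules:Admissible}, nothing deeper is required, and the closing induction on derivations then completes the proof.
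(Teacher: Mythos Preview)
Your proposal is correct and follows essentially the same approach as the paper: invoke Lemma~\ref{lem:AllRules:Admissible} for the quantifier rules, observe that no Limit-rule is present in $\CTT[\omega]$, derive the translated Comprehension scheme from $\STTu$-Comprehension, and handle Type-Founded and Type-Base via Lemma~\ref{lem:sttu:upisneat} together with the generalized Up-axioms. If anything you are more careful than the paper in a couple of places---you explicitly check well-formedness and the identity scheme, and you flag the Up-Inject step needed when $\max(m,n+1) \neq \max(m,n)$ in the Type-Founded case, which the paper's ``i.e.'' glosses over.
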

\begin{proof}
    We simply check all inference rules and axioms. Lemma \ref{lem:AllRules:Admissible} deals with the quantifier-rules, and no Limit-rules apply since we are considering \CTT[\omega]. 
    
    \emph{\CTT[\omega]-Comprehension}. If $\phi(x^n)$ is an \CTT[\omega]-formula, then $\phi^I(x^n)$ is an \STTu-formula; now use \STTu-Comprehension.
    
    \emph{Type-Founded}. Suppose $[a^m \inCTT b^{n+1}]^I$ i.e.\ $\uptypeto{i+1}b^{n+1}(\uptypeto{i}a^m)$ with $i = \max(m, n+1)$ by Lemma \ref{lem:sttu:upisneat}.\ref{sttu:inCTT}. By generalized Up-Founded, there is $z^{n}$ such that $\uptypeto{i}a^m = \uptypeto{i} z^n$, i.e.\ $[a^m \eqCTT z^n]^I$ by Lemma \ref{lem:sttu:upisneat}.\ref{sttu:eqCTT}. 
    
    \emph{Type-Base}. By generalized Up-Base, $\lnot\uptypeto[0]{n+1} y^0(x^n)$; so $[x^n \notinCTT y^0]^I$ by Lemma \ref{lem:sttu:upisneat}.\ref{sttu:inCTT}.
\end{proof}\noindent
We now switch to working in \CTT[\omega]. It will help if we allow ourselves the use of a definite description operator, $\maththe$, within \CTT[\omega]. (This is harmless since, by standard Russellian techniques, this can always be eliminated from any formula.)\footnote{\label{fn:harmlessfudge}So we are relying on the fact that \CTT[\omega] augmented with this device is definitionally equivalent to \CTT[\omega]. Clearly, it is. Still, for details of how to handle function symbols more austerely, see e.g.\ \textcite[\S5.5, esp. fn.23]{ButtonWalsh:PMT}.} Now, by Type-Raising in \CTT[\omega], for any type $n$ and each $x^n$ there is a unique $x^{n+1}$ such that $x^n \eqCTT x^{n+1}$; we will denote this in \CTT[\omega] using $\upsurrogate x^n$, i.e.\ $\upsurrogate x^n \coloneq \TheAbs{x^{n+1}}{x^n \eqCTT x^{n+1}}$. As before, we write $\upsurrogateto[m]{n} a^m$ for the result of applying $n-m$ instances of $\upsurrogate$ to $a^m$, yielding a type $n$ entity. We now define an interpretation, $J$, from \STTu to \CTT[\omega], with these actions:\footnote{And: $[x^n = y^n]^J \coloneq x^n = y^n$; $[\phi \land \psi]^J \coloneq (\phi^J \land \psi^J)$; $[\lnot \phi]^J \coloneq \lnot \phi^J$; and $[\forall x^n\phi]^J \coloneq \forall x^n \phi^J$.}
\begin{align*}
    [y^{n+1}(x^n)]^J &\coloneq y^{n+1}(x^n)\\
    [\uptype x^n]^J &\coloneq \upsurrogate x^n
\end{align*}
\begin{lem}\label{lem:from-STTu-to-CTTomega}
    $J : \STTu \functionto \CTT[\omega]$ is an interpretation.
\end{lem}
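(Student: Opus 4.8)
The plan is to verify that $J$ sends every axiom and rule of \STTu\ to a theorem or admissible rule of \CTT[\omega]. This is routine for the logical apparatus: $J$ preserves all logical structure and maps a type-$n$ variable to a type-$n$ variable, so the propositional and identity rules transfer unchanged, each \STT-rule $\forall$E$^n$ (respectively $\forall$I$^n$) becomes the \CTT[\omega]-rule \AllE{n}{n} (respectively \AllI{n}{n}), and no Limit-rules arise. For \STT-Comprehension I would observe that when $\phi$ is a well-formed \STTu-formula not containing $z^{n+1}$, its translation $\phi^J$ is a well-formed \CTT[\omega]-formula---working, harmlessly, in the extension of \CTT[\omega]\ with $\maththe$---not containing $z^{n+1}$, and since $J$ fixes atomic formulas of the form $z^{n+1}(x^n)$, the $J$-translation of the relevant instance is literally an instance of \CTT[\omega]-Comprehension. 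So the real work is to derive the $J$-translations of Up-Inject, Up-Possess, Up-Founded and Up-Base in \CTT[\omega].

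Before doing so I would record two facts about the operator $\upsurrogate$. First, by the Type-Raising Scheme (Lemma~\ref{fact:TR}) together with the symmetry of $\eqCTT$ and transitivity (Lemma~\ref{fact:CTT}.\ref{eqCTT:equiv}), for each $x^n$ there is a \emph{unique} type-$(n\mathord{+}1)$ entity $\eqCTT$-related to $x^n$; hence $\upsurrogate x^n$ is well-defined and satisfies $x^n \eqCTT \upsurrogate x^n$. Second, $x^n \eqCTT y^n$ entails $x^n = y^n$ (Leibniz's law for $\eqCTT$, Lemma~\ref{fact:CTT}.\ref{eqCTT:E}); for consecutive types, $a^n \inCTT b^{n+1} \liff b^{n+1}(a^n)$ (Lemma~\ref{lem:Foundation}); and $\inCTT$ respects $\eqCTT$ in each argument place (Lemma~\ref{fact:CTT}.\ref{inCTT:in}--\ref{inCTT:ni}). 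Given these, the four axioms are short: \emph{Up-Inject} holds since $\upsurrogate x^n = \upsurrogate y^n$ forces $x^n \eqCTT \upsurrogate x^n = \upsurrogate y^n \eqCTT y^n$, hence $x^n \eqCTT y^n$ and so $x^n = y^n$; \emph{Up-Possess} holds by chaining $y^{n+1}(x^n) \liff x^n \inCTT y^{n+1} \liff \upsurrogate x^n \inCTT \upsurrogate y^{n+1} \liff \upsurrogate y^{n+1}(\upsurrogate x^n)$, the middle biconditional coming from $x^n \eqCTT \upsurrogate x^n$, $y^{n+1} \eqCTT \upsurrogate y^{n+1}$ and Lemma~\ref{fact:CTT}.\ref{inCTT:in}--\ref{inCTT:ni}; and \emph{Up-Base} holds since $\upsurrogate y^0(x^0)$ would give $x^0 \inCTT \upsurrogate y^0$, hence $x^0 \inCTT y^0$ by transferring along $\upsurrogate y^0 \eqCTT y^0$, contradicting Type-Base.

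The step I expect to take a little more care is \emph{Up-Founded}. Here the argument runs: from $\upsurrogate y^{n+1}(x^{n+1})$ we get $x^{n+1} \inCTT \upsurrogate y^{n+1}$, hence $x^{n+1} \inCTT y^{n+1}$ by transferring along $\upsurrogate y^{n+1} \eqCTT y^{n+1}$; Lemma~\ref{lem:Foundation} then supplies some $z^n$ with $z^n \eqCTT x^{n+1}$; and finally the uniqueness half of Type-Raising noted above forces $\upsurrogate z^n = x^{n+1}$, which is exactly the conclusion $\exists z^n\ x^{n+1} = \upsurrogate z^n$. So the genuine ingredients are Type-Founded (via Lemma~\ref{lem:Foundation}) together with the uniqueness that makes $\upsurrogate$ a genuine function; everything else is bookkeeping with Lemma~\ref{fact:CTT}, and the only real hazard is keeping the type-indices straight and confirming at the outset that $\upsurrogate$ is well-defined.
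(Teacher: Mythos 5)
Your proposal is correct and follows essentially the same route as the paper's own proof: check Comprehension directly against \CTT[\omega]-Comprehension, get Up-Inject and Up-Possess from Lemmas \ref{fact:TR}--\ref{fact:CTT}, and get Up-Founded and Up-Base from Type-Founded and Type-Base. The paper's version is simply terser; your extra care about the well-definedness of $\upsurrogate$ and the uniqueness step in Up-Founded just makes explicit what the paper leaves implicit when it introduces the description operator.
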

\begin{proof}
    \CTT[\omega]-Comprehension immediately licenses \STTu-Comprehension. For Up-Inject, suppose $\upsurrogate x^n = \upsurrogate y^n$, i.e.\ $\TheAbs{x^{n+1}}{x^n \eqCTT x^{n+1}} = \TheAbs{y^{n+1}}{y^n \eqCTT y^{n+1}}$; so $x^n \eqCTT y^n$ by Lemma \ref{fact:CTT}, and hence $x^n = y^n$. Similarly, Up-Possess holds by Lemma \ref{fact:CTT}. And Up-Founded and Up-Base hold via Type-Founded and Type-Base.
\end{proof}\noindent
It only remains to show that $I$ and $J$ together yield a definitional equivalence. 
\begin{lem}\label{lem:CTTomega:DE}
    \CTT[\omega] proves this scheme: $[[y^n(x^m)]^I]^J \liff y^n(x^m)$.
\end{lem}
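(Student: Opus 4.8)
The plan is to unwind the definitions of $I$ and $J$ until the claim reduces to one application of the $\eqCTT$-substitution principle. First I would compute $[y^n(x^m)]^I = y^n(\uptypeto[m]{n-1}x^m)$ directly from the definition of $I$; note that $y^n$ is here applied to a term of type $n\mathord{-}1$, so this is a genuine \STTu-formula. Next I would apply $J$: since $J$ acts homomorphically on the connectives and quantifiers, fixes variables, and replaces each occurrence of $\uptype$ by $\upsurrogate$, a trivial induction on the number of occurrences of $\uptype$ gives $[\uptypeto[m]{n-1}x^m]^J = \upsurrogateto[m]{n-1}x^m$, hence $[[y^n(x^m)]^I]^J = y^n(\upsurrogateto[m]{n-1}x^m)$. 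If $n = m+1$ there are no occurrences of $\uptype$ and $\upsurrogateto[m]{n-1}x^m$ is literally $x^m$, so the biconditional is immediate; the remaining work is the case $n > m+1$, where I must show $\CTT[\omega] \proves y^n(\upsurrogateto[m]{n-1}x^m) \liff y^n(x^m)$.

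The heart of the proof is the fact that $\CTT[\omega] \proves x^m \eqCTT \upsurrogateto[m]{n-1}x^m$. Recall $\upsurrogate w^k \coloneq \TheAbs{w^{k+1}}{w^k \eqCTT w^{k+1}}$. I would first check this description is proper in \CTT[\omega]: Lemma \ref{fact:TR} supplies a witness $w^{k+1}$ with $w^k \eqCTT w^{k+1}$, and uniqueness follows because, restricted to a single type, $\eqCTT$ coincides with strict identity (by \CTT's identity scheme) while also being transitive (Lemma \ref{fact:CTT}(\ref{eqCTT:equiv})). So by construction $w^k \eqCTT \upsurrogate w^k$ for every $w^k$ of every type. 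Applying this at the successive types $m, m+1, \dots, n\mathord{-}2$ and chaining with transitivity of $\eqCTT$ — all the side-conditions on Lemma \ref{fact:CTT} hold automatically, since every type in sight is finite and $\tau = \omega$ — yields $x^m \eqCTT \upsurrogate x^m \eqCTT \dots \eqCTT \upsurrogateto[m]{n-1}x^m$, and hence $x^m \eqCTT \upsurrogateto[m]{n-1}x^m$.

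Finally I would invoke the substitution property of $\eqCTT$. Both $y^n(x^m)$ (well-formed since $n > m$) and $y^n(\upsurrogateto[m]{n-1}x^m)$ (well-formed since $n > n\mathord{-}1$) are \CTT[\omega]-formulas, so by Lemma \ref{fact:CTT}(\ref{eqCTT:E}) — together with the symmetry of $\eqCTT$, which is immediate from its defining biconditional — the fact that $x^m \eqCTT \upsurrogateto[m]{n-1}x^m$ delivers $y^n(x^m) \liff y^n(\upsurrogateto[m]{n-1}x^m)$, which is exactly what was wanted. I do not anticipate any real obstacle; the only points that need care are confirming that the definite description defining $\upsurrogate$ genuinely denotes in \CTT[\omega], and the bookkeeping showing that nested applications of $\uptype$ translate under $J$ into nested applications of $\upsurrogate$ while the $\eqCTT$-link back to $x^m$ survives at every level.
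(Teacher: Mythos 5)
Your proposal is correct and follows essentially the same route as the paper's own (much terser) proof: unwind $I$ to get $y^n(\uptypeto[m]{n-1}x^m)$, unwind $J$ to get $y^n(\upsurrogateto[m]{n-1}x^m)$, and discharge the final biconditional via Lemma \ref{fact:CTT}. Your extra care about the propriety of the description defining $\upsurrogate$ and the explicit $\eqCTT$-chaining just fills in details the paper leaves implicit.
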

\begin{proof}
    Note that $[[y^n(x^m)]^I]^J\text{ iff }
        [y^n(\uptypeto[m]{n-1}x^m)]^J\text{ iff }
        y^n(\upsurrogateto[m]{n-1} x^m) \text{ iff }
        y^n(x^m)$, using Lemma \ref{fact:CTT} for the final biconditional.
\end{proof}
\begin{lem}\label{lem:STTu:DE}
    \STTu proves these schemes: 
        $[[y^{n+1}(x^n)]^J]^I \liff y^{n+1}(x^n)$
        and 
        $[[\uptype x^n]^J]^I = \uptype x^n$. 
\end{lem}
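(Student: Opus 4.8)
The plan is to compute both sides directly, term by term; no induction is needed, since in each case the two sides collapse to the very same formula — the first outright, the second after discharging a trivial definite description. This is the mirror image of Lemma \ref{lem:CTTomega:DE}: there one checks that $J \circ I$ is the identity (up to provable equivalence) on \CTT[\omega]-atoms, and here one checks that $I \circ J$ is the identity on \STTu-atoms.

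For the first scheme, I would apply $J$ first: $[y^{n+1}(x^n)]^J$ is just $y^{n+1}(x^n)$, straight from the clause defining $J$. Applying $I$ to this \CTT[\omega]-formula, the $I$-clause would in general prepend $(n+1) - 1 - n = 0$ copies of $\uptype$ to $x^n$, so by the observation recorded immediately after the definition of $I$ (that $[y^k(x^m)]^I$ is just $y^k(x^m)$ when $k = m+1$) we get $y^{n+1}(x^n)$ again. Hence $[[y^{n+1}(x^n)]^J]^I$ is literally $y^{n+1}(x^n)$, and the biconditional is trivial.

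For the second scheme, apply $J$ first: $[\uptype x^n]^J$ is $\upsurrogate x^n$, i.e.\ $\TheAbs{x^{n+1}}{x^n \eqCTT x^{n+1}}$. Now apply $I$. Since $I$ commutes with the logical apparatus and the description operator is eliminable in Russellian fashion (as flagged in footnote \ref{fn:harmlessfudge}), $[\TheAbs{x^{n+1}}{x^n \eqCTT x^{n+1}}]^I$ is $\TheAbs{x^{n+1}}{[x^n \eqCTT x^{n+1}]^I}$. The key step is to evaluate $[x^n \eqCTT x^{n+1}]^I$. Unfolding the \CTT-definition of $\eqCTT$ (with $\gamma = \max(n, n+1) + 1 = n+2$), $x^n \eqCTT x^{n+1}$ abbreviates $\forall z^{n+2}(z^{n+2}(x^n) \liff z^{n+2}(x^{n+1}))$; applying the $I$-clause term by term, one $\uptype$ is prepended on the left (as $(n+2) - 1 - n = 1$) and none on the right (as $(n+2) - 1 - (n+1) = 0$), so this becomes $\forall z^{n+2}(z^{n+2}(\uptype x^n) \liff z^{n+2}(x^{n+1}))$. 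But this last formula is exactly the \STT identity scheme applied to the type-$(n{+}1)$ terms $\uptype x^n$ and $x^{n+1}$, so in \STTu it is (equivalent to) $\uptype x^n = x^{n+1}$. Therefore $[[\uptype x^n]^J]^I$ is $\TheAbs{x^{n+1}}{\uptype x^n = x^{n+1}}$, and the standard description-elimination fact $\TheAbs{w}{a = w} = a$ — valid in \STTu, since $\uptype x^n$ is provably the unique term identical with it — yields $[[\uptype x^n]^J]^I = \uptype x^n$.

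I expect the only delicate point to be the bookkeeping around the definite description: one must check that applying $I$ inside $\TheAbs{x^{n+1}}{\cdot}$ is legitimate (via Russellian elimination) and that the resulting description is provably proper in \STTu (which follows, since the \STT identity scheme ensures $\forall z^{n+2}(z^{n+2}(\uptype x^n) \liff z^{n+2}(x^{n+1}))$ has, provably in \STTu, the unique solution $x^{n+1} = \uptype x^n$). Everything else is a matter of counting the $\uptype$'s inserted by the $I$-clause and recognising the unfolded $\eqCTT$ as the \STT identity scheme. Combined with Lemma \ref{lem:CTTomega:DE} and Lemmas \ref{lem:from-CTTomega-to-STTu}--\ref{lem:from-STTu-to-CTTomega}, this establishes that \CTT[\omega] and \STTu are definitionally equivalent.
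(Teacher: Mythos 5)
Your proposal is correct and follows essentially the same route as the paper's own proof: the first scheme is handled by noting that $J$ and then $I$ leave $y^{n+1}(x^n)$ untouched, and the second by unfolding $\upsurrogate x^n$ as $\TheAbs{x^{n+1}}{x^n \eqCTT x^{n+1}}$, pushing $I$ through, recognising the result as the \STT identity scheme to get $\TheAbs{x^{n+1}}{\uptype x^n = x^{n+1}}$, and discharging the description. Your extra remark about the properness of the description and the legitimacy of Russellian elimination is a welcome making-explicit of what the paper relegates to footnote \ref{fn:harmlessfudge}, but it does not change the argument.
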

\begin{proof}
    The first scheme is trivial. For the second:
    \begin{align*}
        [[\uptype x^n]^J]^I = 
            [\upsurrogate x^n]^I
        & = \TheAbs{x^{n+1}}{x^n \eqCTT x^{n+1}}^I\\
        & = \TheAbs{x^{n+1}}{\forall z^{n+2}(z^{n+2}(x^n) \liff z^{n+2}(x^{n+1}))}^I\\
        & = \TheAbs{x^{n+1}}{\forall z^{n+2}(z^{n+2}(\uptype x^n) \liff z^{n+2}(x^{n+1}))}\\
        & = \TheAbs{x^{n+1}}{\uptype x^n  = x^{n+1}}\\
        &= \uptype x^n
    \end{align*}
\end{proof}\noindent
Assembling Lemmas \ref{lem:from-CTTomega-to-STTu}--\ref{lem:STTu:DE}, we obtain:
\begin{thm}\label{thm:CTTomega:DE}
    \STTu and \CTT[\omega] are definitionally equivalent
\end{thm}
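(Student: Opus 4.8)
The plan is to assemble the four preceding lemmas into the standard criterion for definitional equivalence. Recall that $\STTu$ and $\CTT[\omega]$ are definitionally equivalent just in case there are interpretations $I : \CTT[\omega] \functionto \STTu$ and $J : \STTu \functionto \CTT[\omega]$ such that $\CTT[\omega] \proves \phi \liff [\phi^I]^J$ for every $\CTT[\omega]$-formula $\phi$, and $\STTu \proves \psi \liff [\psi^J]^I$ for every $\STTu$-formula $\psi$ (the latter also requires $\STTu \proves t = [t^J]^I$ for every $\STTu$-term $t$, since $\STTu$ has the function symbol $\uptype$). Lemmas \ref{lem:from-CTTomega-to-STTu} and \ref{lem:from-STTu-to-CTTomega} already supply the two interpretations, so it only remains to verify the two round-trip conditions, and this is done by induction on complexity with the base cases already in hand.

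For the $\CTT[\omega]$ round-trip, Lemma \ref{lem:CTTomega:DE} gives $\CTT[\omega] \proves [[y^n(x^m)]^I]^J \liff y^n(x^m)$ for every atomic formula $y^n(x^m)$, and the only other atomic case, $x^n = y^n$, is immediate since both $I$ and $J$ fix identity statements. The induction step is automatic: as recorded in the footnotes fixing $I$ and $J$, both interpretations commute with $\lnot$, $\land$, and $\forall x^n$, so e.g.\ $[[\lnot\phi]^I]^J = \lnot[[\phi^I]^J]$, which is $\CTT[\omega]$-equivalent to $\lnot\phi$ by the induction hypothesis, and similarly for $\land$ and $\forall$. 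This yields $\CTT[\omega] \proves \phi \liff [\phi^I]^J$ for all $\phi$.

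For the $\STTu$ round-trip there is one extra wrinkle: because of the function symbol $\uptype$, one must first check the round-trip on terms by induction on term complexity. The only non-trivial clause is $\uptype$, and the second scheme of Lemma \ref{lem:STTu:DE} gives exactly $\STTu \proves [[\uptype x^n]^J]^I = \uptype x^n$; combined with the fact that $I$ and $J$ act trivially on variables, a routine term-induction delivers $\STTu \proves t = [t^J]^I$ for every $\STTu$-term $t$. With that in hand the formula-induction runs as before: the atomic case $y^{n+1}(x^n)$ is the first scheme of Lemma \ref{lem:STTu:DE} (together with the term round-trip for the arguments), the case $x^n = y^n$ is again immediate, and the connective/quantifier steps go through since $I$ and $J$ commute with $\lnot$, $\land$, $\forall$. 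Hence $\STTu \proves \psi \liff [\psi^J]^I$, and $I, J$ constitute a definitional equivalence.

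I expect no genuine obstacle: the theorem is a bookkeeping corollary of Lemmas \ref{lem:from-CTTomega-to-STTu}--\ref{lem:STTu:DE}. The only points needing a modicum of care are (i) remembering that the $\STTu$-side round-trip must be verified on terms as well as formulas, because of $\uptype$, and (ii) the use of the description operator $\maththe$ in defining $J$, which footnote \ref{fn:harmlessfudge} has already licensed by noting that $\CTT[\omega]$ augmented with $\maththe$ is itself definitionally equivalent to $\CTT[\omega]$ proper.
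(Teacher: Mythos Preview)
Your proposal is correct and takes essentially the same approach as the paper: the paper's own proof is the single line ``Assembling Lemmas \ref{lem:from-CTTomega-to-STTu}--\ref{lem:STTu:DE}, we obtain'', and you have simply spelled out what that assembly amounts to. Your extra care about the term-induction for $\uptype$ and the remark about the description operator are appropriate elaborations rather than departures.
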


\section{Definitional equivalence for \FJT}
\label{equivalence:FJT}
In \S\ref{type theories:CTTfj}, we stated that \FJT is definitionally equivalent to \STTd. In this appendix, we define \STTd and prove the equivalence. 

The guiding idea is to simulate \FJT by using a version of \STT with this sort of behaviour: for all types $1 < m < n$, each type $n$ entity $a^n$, projects downwards to some type $m$ entity $\mathord{\downarrow_m}a^n$; we can then simulate an application $a^n(x^{m-1})$ by instead considering $\mathord{\downarrow_m}a^n(x^{m-1})$. However, there is a small snag: we are treating   $\mathord{\downarrow_m}$ as \emph{functional}; but, if we assume no version of extensionality, then we will have no way to decide whether $a^2$ should project downwards to $b^1$ or $c^1$, if $b^1$ and $c^1$ are coextensional. The snag can be avoided by using a relational (rather than function) version of downward-projection. What follows spells this out rigorously.

We define \STTd by augmenting \STT as follows. For each $n > 0$, we have a relational constant, $\typedownrel$, expressing the downward-projecting relation from a type $n\mathord{+}1$ entity to a type $n$ entity. So we write e.g.\ $e^4 \typedownrel d^3$ or $b^1 \typedownrel a^0$;\footnote{As with the signs $=$, $\eqCTT$ or $\inCTT$, we are using the same symbol (in a typically ambiguous way) for each type level.} when convenient, we may write $d^3 \typeuprel e^4$ or $a^0 \typeuprel b^1$ instead. We introduce some useful abbreviations:
    \begin{align*}
        a^n \coextensive b^n & \mliffdf 
            \forall x^{n-1}(a^n(x^{n-1}) \liff b^n(x^{n-1}))\text{, when }n>0\\
        a^n \typedowneq b^n & \mliffdf 
            \forall x^{n-1}(a^n \typedownrel x^{n-1} \liff b^n \typedownrel x^{n-1})\text{, when }n >1\\
        a^1 \typedowneq b^1 & \mliffdf a^1 = a^1 
    \end{align*}
So $a^n \coextensive b^n$ tells us that $a^n$ and $b^n$ are coextensive, and $a^n \typedowneq b^n$ tells us that $a^n$ and $b^n$ project downwards to exactly the same entities. The special stipulation for $a^1 \typedowneq b^1$ is needed as \STTd has \emph{no} relational constant $\typedownrel$ expressing a relation from a type $1$ entity to a type $0$ entity, and so will hold vacuously. We concatenate chains of conjunctions; so we may write e.g.\ $a^2 \typeuprel d^3 \typedownrel b^2 \coextensive c^2$ in place of $(d^3 \typedownrel a^2 \land d^3 \typedownrel b^2 \land b^2 \coextensive c^2)$. \STTd retains the \STT-Comprehension scheme for type $1$ entities, i.e.\ $\exists z^1 \forall x^0(z^1(x^0) \liff \phi(x^0))$; but for each $n > 1$, it has an augmented scheme:\footnote{It follows that some models of (plain vanilla) \STT cannot be turned into models of \STTd just by assigning some meaning to ``$\typedownrel$''. \emph{Example}: it is consistent with \STT that there are exactly four type $2$ entities; whereas \STTd (and \FJT) prove that there are at least eight type $2$ entities.}
\begin{listclean}
    \item[\emph{\STTd-Comprehension}.] $\forall y^n (\exists z^{n+1} \typedownrel y^n) \forall x^n(z^{n+1}(x^n)\liff \phi(x^n))$, for any formula $\phi(x^n)$ not containing $z^{n+1}$. 
\end{listclean}
For each $n > 0$, \STTd also has these axioms:
\begin{listclean}
    \item[\emph{Down$_\exists$}.]  $\forall z^{n+1}\exists x^n\phantom{)}z^{n+1} \typedownrel x^n$ 

    \item[\emph{Down$_\text{Sim}$}.] 
    $\forall z^{n+1}\forall x^{n}\forall y^n(x^n \typeuprel z^{n+1} \typedownrel y^n \lonlyif x^n \coextensive y^n \typedowneq x^{n})$
    
    \item[\emph{Down$_\text{Max}$}.] 
    $\forall z^{n+1}\forall x^{n}\forall y^{n}(z^{n+1} \typedownrel x^n \coextensive y^n \typedowneq x^{n} \lonlyif z^{n+1} \typedownrel y^{n})$
\end{listclean}
So Down$_\exists$ says that all entities of types $\geq 2$ project downwards; Down$_\text{Sim}$ says that if $z^{n+1}$ projects to two entities $x^n$ and $y^n$, then $x^n$ and $y^n$ apply and project to exactly the same entities; and Down$_\text{Max}$ says that if $z^{n+1}$ projects to some entity $x^n$, then $z^{n+1}$ also projects to any $y^n$ which applies and projects to exactly the same entities as $x^n$. These axioms ensure that $\typedownrel$-chains are always equivalent, in a strong sense which is brought out by these next two lemmas: 
\begin{lem}\label{lem:typedowneq}
    \STTd proves this scheme (with $n > 0$). If $a^{n+1} \typedownrel x^n$ and $b^{n+1} \typedownrel x^n$ for some $x^n$, then $a^{n+1} \typedowneq b^{n+1}$.
\end{lem}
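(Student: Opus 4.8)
The plan is to prove the biconditional $a^{n+1} \typedowneq b^{n+1}$ by showing, for an arbitrary $y^n$, that $a^{n+1} \typedownrel y^n$ entails $b^{n+1} \typedownrel y^n$; the reverse implication then follows by the symmetry of the hypothesis in $a^{n+1}$ and $b^{n+1}$, and the two together give $\forall y^n(a^{n+1} \typedownrel y^n \liff b^{n+1} \typedownrel y^n)$. Since $n > 0$, i.e.\ $n + 1 > 1$, this is exactly what $a^{n+1} \typedowneq b^{n+1}$ abbreviates.

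So I would assume $a^{n+1} \typedownrel x^n$, $b^{n+1} \typedownrel x^n$, and $a^{n+1} \typedownrel y^n$, and argue as follows. First apply Down$_\text{Sim}$ with $z^{n+1} := a^{n+1}$: since $x^n \typeuprel a^{n+1} \typedownrel y^n$, this yields $x^n \coextensive y^n \typedowneq x^n$, i.e.\ both $x^n \coextensive y^n$ and $y^n \typedowneq x^n$. Then apply Down$_\text{Max}$ with $z^{n+1} := b^{n+1}$: from $b^{n+1} \typedownrel x^n \coextensive y^n \typedowneq x^n$ we conclude $b^{n+1} \typedownrel y^n$, as required.

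There is essentially no mathematical obstacle here — the axioms Down$_\text{Sim}$ and Down$_\text{Max}$ are tailored to this fact — so the only care needed is bookkeeping. One must read \STTd's concatenated-conjunction notation correctly, so that the antecedent of Down$_\text{Sim}$ is $(a^{n+1} \typedownrel x^n \land a^{n+1} \typedownrel y^n)$ and that of Down$_\text{Max}$ is $(b^{n+1} \typedownrel x^n \land x^n \coextensive y^n \land y^n \typedowneq x^n)$. And the boundary case $n = 1$ should be noted as unproblematic: there $y^1 \typedowneq x^1$ holds vacuously by the special stipulation, so the output of Down$_\text{Sim}$ reduces to plain coextensionality $x^1 \coextensive y^1$, which is precisely what Down$_\text{Max}$ requires at that level.
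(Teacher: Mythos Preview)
Your proof is correct and is essentially identical to the paper's own argument: assume $a^{n+1}\typedownrel y^n$, use Down$_\text{Sim}$ on $a^{n+1}$ to get $x^n \coextensive y^n \typedowneq x^n$, then use Down$_\text{Max}$ on $b^{n+1}$ to obtain $b^{n+1}\typedownrel y^n$, with the converse by symmetry. Your added remarks about the concatenation convention and the vacuous $n=1$ case are accurate bookkeeping that the paper leaves implicit.
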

\begin{proof}
    Suppose $a^{n+1} \typedownrel x^n$ and $b^{n+1} \typedownrel x^n$. If $a^{n+1} \typedownrel y^n$, then $x^n \coextensive y^n \typedowneq x^n$ by Down$_\text{Sim}$, so $b^{n+1} \typedownrel y^n$ by Down$_\text{Max}$; similarly if $b^{n+1} \typedownrel y^n$ then $a^{n+1} \typedownrel y^n$.     
\end{proof}
\begin{lem}\label{lem:thechain}
    \STTd proves this scheme  (with $n > 0$). Given any $\typedownrel$-chains:
    \begin{align*}
    a^{n+1} \typedownrel {}&a^{n} \typedownrel a^{n-1} \typedownrel \ldots \typedownrel a^{1}\\
        &b^{n} \typedownrel b^{n-1} \typedownrel \ldots \typedownrel b^{1}
    \end{align*}
    \begin{listn-0}
        \item\label{lem:thechain:coext} If $a^{n+1} \typedownrel b^{n}$, then $\bigland_{1 \leq i \leq n} a^{i} \coextensive b^{i} \typedowneq a^{i}$.
        \item\label{lem:thechain:gag} If there is $m$ such that $1 \leq m \leq n$ and $b^m \typedowneq a^m$ and $\bigland_{m \leq i \leq n} a^{i} \coextensive b^{i}$, then $\bigland_{m \leq i \leq n} b^{i} \typedowneq a^{i}$ and $a^{n+1} \typedownrel b^{n}$.
    \end{listn-0}
\end{lem}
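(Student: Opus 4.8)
The plan is to prove both clauses as schemes by induction on the chain index, in the metatheory, using only the axioms \emph{Down}$_\text{Sim}$ and \emph{Down}$_\text{Max}$ together with Lemma \ref{lem:typedowneq}. Two preliminary observations make the bookkeeping smooth: first, $\typedowneq$ is symmetric, so $a^i \typedowneq b^i$ and $b^i \typedowneq a^i$ are interchangeable; second, $a^1 \typedowneq b^1$ holds vacuously by definition, so every $\typedowneq$-conjunct at level $1$ is automatic and never needs to be established. Throughout, ``the first chain'' supplies $a^{i+1} \typedownrel a^i$ for each relevant $i$ (including $a^{n+1} \typedownrel a^n$), and ``the second chain'' supplies $b^{i+1} \typedownrel b^i$.

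For clause \ref{lem:thechain:coext} I would run a \emph{downward} induction on $i$, from $n$ down to $1$, establishing $a^i \coextensive b^i \typedowneq a^i$ (i.e.\ $a^i \coextensive b^i$ and $b^i \typedowneq a^i$) at each stage. For the base case $i = n$: we have $a^{n+1} \typedownrel a^n$ from the first chain and $a^{n+1} \typedownrel b^n$ by hypothesis, so \emph{Down}$_\text{Sim}$ (with $z^{n+1} = a^{n+1}$, $x^n = a^n$, $y^n = b^n$) delivers $a^n \coextensive b^n \typedowneq a^n$. For the inductive step, from level $i+1$ to level $i$ (with $2 \leq i+1 \leq n$): the induction hypothesis includes $b^{i+1} \typedowneq a^{i+1}$, and instantiating its defining universal at $b^i$, together with $b^{i+1} \typedownrel b^i$ from the second chain, gives $a^{i+1} \typedownrel b^i$; since also $a^{i+1} \typedownrel a^i$, \emph{Down}$_\text{Sim}$ again yields $a^i \coextensive b^i \typedowneq a^i$.

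For clause \ref{lem:thechain:gag} I would run an \emph{upward} induction on $i$, from $m$ up to $n$, establishing $b^i \typedowneq a^i$. The base case $i = m$ is exactly the hypothesis $b^m \typedowneq a^m$. For the step from $i$ to $i+1$ (with $m \leq i < n$): the induction hypothesis gives $b^i \typedowneq a^i$, and the assumption gives $a^i \coextensive b^i$ (legitimate since $m \leq i \leq n$), so $a^i \coextensive b^i \typedowneq a^i$; combined with $a^{i+1} \typedownrel a^i$, \emph{Down}$_\text{Max}$ yields $a^{i+1} \typedownrel b^i$. Since also $b^{i+1} \typedownrel b^i$, Lemma \ref{lem:typedowneq} gives $a^{i+1} \typedowneq b^{i+1}$, i.e.\ $b^{i+1} \typedowneq a^{i+1}$, as required. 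Having run this to $i = n$, I obtain $b^n \typedowneq a^n$; and since $a^n \coextensive b^n$ by assumption, one final application of \emph{Down}$_\text{Max}$ (with $z^{n+1} = a^{n+1}$, using $a^{n+1} \typedownrel a^n$ from the first chain) gives $a^{n+1} \typedownrel b^n$.

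I do not expect any deep obstacle here; the work is essentially just careful index-tracking across the two chains and attention to the degenerate boundary cases ($n = 1$, $m = n$, and level $1$, where $\typedowneq$ is vacuous), checking that the inductions still go through there. The only point requiring a moment's care is that the $\typedowneq$ fact produced at each stage of clause \ref{lem:thechain:coext} is what feeds the transfer of a projection claim from $a^{i+1}$ to $b^i$ at the \emph{next} stage — so the induction must carry the $\typedowneq$-conjunct along even though the headline content is the coextensionality — and symmetrically in clause \ref{lem:thechain:gag}, where \emph{Down}$_\text{Max}$ plus Lemma \ref{lem:typedowneq} replaces the \emph{Down}$_\text{Sim}$ step used for clause \ref{lem:thechain:coext}.
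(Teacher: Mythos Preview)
Your proposal is correct and follows essentially the same approach as the paper: a downward induction via Down$_\text{Sim}$ for clause~\eqref{lem:thechain:coext}, and an upward induction via Down$_\text{Max}$ together with Lemma~\ref{lem:typedowneq} for clause~\eqref{lem:thechain:gag}, finishing with one more application of Down$_\text{Max}$ to obtain $a^{n+1}\typedownrel b^n$. The paper's proof is much terser (it merely gestures at the inductions), but your spelled-out version matches it step for step.
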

\begin{proof}    
    \emphref{lem:thechain:coext} From Down$_\text{Sim}$, by induction.
    
    \emphref{lem:thechain:gag} By assumption, $a^{m+1} \typedownrel a^m \coextensive b^m \typedowneq a^m$, so $a^{m+1} \typedownrel b^m$ by Down$_\text{Max}$; so $b^{m+1} \typedowneq a^{m+1}$ by Lemma \ref{lem:typedowneq}. This establishes a base case; the rest follows by induction. Now $a^{n+1} \typedownrel b^{n}$ by Down$_\text{Max}$.
\end{proof}\noindent
We will use these results to prove that \STTd and \FJT are definitionally equivalent (Theorem \ref{thm:CTTfj:DE}). We first define an interpretation, $I$, to take us from \FJT to \STTd:\footnote{We choose variables to avoid clashes; $I$'s other actions are trivial.}
\begin{align*}
    [y^n(x^m)]^I &\coloneq \forall y^{n-1}\forall y^{n-2}\ldots \forall y^{m+1}(y^n \typedownrel y^{n-1} \typedownrel y^{n-2} \typedownrel \ldots \typedownrel y^{m+1} \lonlyif y^{m+1}(x^m))
\end{align*}
Note that if $m + 1 = n$, then $[y^n(x^m)]^I$ is just $y^n(x^m)$.
\begin{lem}\label{lem:from-CTTfj-to-STTd}
    $I : \FJT \functionto \STTd$ is an interpretation
\end{lem}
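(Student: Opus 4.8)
The plan is to verify the two things required of an interpretation: that $I$ sends every well-formed \FJT-formula to a well-formed \STTd-formula, and that the $I$-translation of each inference rule and axiom of \FJT\ is derivable in \STTd. Since $I$ commutes with the connectives, the quantifiers and $=$, and since \FJT\ is just \STT's logic together with \FJT-Comprehension, the bulk of this is mechanical; the only substantive point is \FJT-Comprehension.

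For well-formedness: in \FJT\ the formula $y^n(x^m)$ is well-formed iff $n>m$, so in $[y^n(x^m)]^I$ every link $y^{j+1}\typedownrel y^{j}$ joins consecutive types and the trailing clause $y^{m+1}(x^m)$ is a legitimate \STT-style application; hence $[y^n(x^m)]^I$ is a well-formed \STTd-formula, and an easy induction extends this to every \FJT-formula, once the fresh bound variables $y^{n-1},\ldots,y^{m+1}$ are renamed to avoid clashes. The $I$-translations of \AllE{n}{} and \AllI{n}{} are then literally \STTd's own $\forall$-rules applied to $\phi^{I}$, hence admissible; and since $[z^{n+1}(x^{n})]^{I}$ is just $z^{n+1}(x^{n})$ and \STTd\ retains \STT's typically-ambiguous identity scheme, the $I$-translations of the identity axioms hold in \STTd.

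The real work is \FJT-Comprehension. Fixing $n>0$ and formulas $\phi_{i}(x^{i})$ for $i<n$, I must exhibit in \STTd\ a $z^{n}$ with $\bigland_{i<n}\forall x^{i}([z^{n}(x^{i})]^{I}\liff\phi_{i}^{I}(x^{i}))$. I would build a canonical $\typedownrel$-chain from the bottom up: use \STT-Comprehension for a type $1$ entity $a^{1}$ with $\forall x^{0}(a^{1}(x^{0})\liff\phi_{0}^{I}(x^{0}))$, and then for each $j$ with $1\leq j<n$ apply \STTd-Comprehension to $a^{j}$ and the formula $\phi_{j}^{I}$ to get $a^{j+1}\typedownrel a^{j}$ with $\forall x^{j}(a^{j+1}(x^{j})\liff\phi_{j}^{I}(x^{j}))$. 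Put $z^{n}\coloneq a^{n}$; by construction $a^{n}\typedownrel a^{n-1}\typedownrel\ldots\typedownrel a^{1}$, and by Down$_\exists$ any $\typedownrel$-chain from $a^{n}$ can be run down to type $1$. For the top level $i=n-1$ one has $[a^{n}(x^{n-1})]^{I}=a^{n}(x^{n-1})$, equivalent to $\phi_{n-1}^{I}(x^{n-1})$ by construction. For $i<n-1$: if $[a^{n}(x^{i})]^{I}$ holds, instantiate it at the canonical chain to obtain $a^{i+1}(x^{i})$, hence $\phi_{i}^{I}(x^{i})$; conversely, if $\phi_{i}^{I}(x^{i})$ (so $a^{i+1}(x^{i})$) and $a^{n}\typedownrel y^{n-1}\typedownrel\ldots\typedownrel y^{i+1}$ is an arbitrary chain, extend it down to type $1$ by Down$_\exists$ and apply Lemma~\ref{lem:thechain}.\ref{lem:thechain:coext} to the canonical chain and this one, obtaining $a^{i+1}\coextensive y^{i+1}$ and hence $y^{i+1}(x^{i})$; so $[a^{n}(x^{i})]^{I}$ holds. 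Thus $a^{n}$ witnesses the $I$-translated instance of \FJT-Comprehension.

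I expect this last equivalence to be the main obstacle. The point is that $[a^{n}(x^{i})]^{I}$ quantifies over \emph{all} downward chains issuing from $a^{n}$, not merely the one we built, so we need Lemma~\ref{lem:thechain} (and behind it Down$_\text{Sim}$ and Down$_\text{Max}$) to know that all those chains are coextensive level by level, so that the single canonical chain determines the whole translation. Everything else --- the well-formedness bookkeeping, the quantifier rules, and the identity axioms --- is routine.
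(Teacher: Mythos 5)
Your proposal is correct and takes essentially the same route as the paper: the paper's proof likewise dispatches everything except \FJT-Comprehension as routine, builds the chain $z^1 \typeuprel z^2 \typeuprel \ldots \typeuprel z^n$ bottom-up by successive applications of \STTd-Comprehension, and then invokes Lemma~\ref{lem:thechain}.\ref{lem:thechain:coext} to get $\forall x^i([z^n(x^i)]^I \liff \phi_i^I(x^i))$ before conjoining and applying $\exists$I. Your extra detail about extending arbitrary downward chains via Down$_\exists$ so that the lemma applies is exactly the step the paper leaves implicit.
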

\begin{proof}
    For all $0 \leq i < n$, let $\phi_i$ be \FJT-formulas not containing $z^n$ or $z^j$ or $y^j$ for any $0 \leq j < n$. (No generality is lost here, as we can relabel variables as necessary.) By multiple successive applications of \STTd-Comprehension, there are $z^1 \typeuprel z^2 \typeuprel \ldots \typeuprel z^n$ such that $\forall x^i(z^{i+1}(x^i) \liff \phi_i^I(x^i))$, for each $0 \leq i < n$. By Lemma \ref{lem:thechain}.\ref{lem:thechain:coext}, for each $0 \leq i < n$, we have:
    \begin{align*}
        \forall x^i(\forall y^{n-1}\ldots \forall y^{i+1}(z^{n} \typedownrel y^{n-1} \typedownrel \ldots \typedownrel y^{i+1} \lonlyif y^{i+1}(x^i)) &\liff \phi_i^I(x^i))\\
        \text{i.e.\ } \forall x^i([z^{n}(x^i)]^I &\liff \phi_i^I(x^i))
    \end{align*}
    Conjoining these biconditionals and applying $\exists$I, we obtain $[\exists z^{n}\bigland_{i < n}\forall x^i(z^n(x^i) \liff \phi_i(x^i))]^I$, i.e.\ an arbitrary instance of $[\text{\FJT-Comprehension}]^I$.
\end{proof}\noindent
We now switch to working in \FJT. We introduce another abbreviation, for a bounded version of coextensiveness, whenever $k \leq \min(m,n)$:  
    \begin{align*}
        a^m \coextensiveCTT{k} b^n \mliffdf \bigland_{i < k} \forall x^i(a^m(x^i) \liff b^n(x^i))
    \end{align*}
Note the bound is $i < k$. We now define an interpretation, $J$, from \STTd to \FJT:\footnote{We choose variables to avoid clashes in $[y^{n+1} \typedownrel x^{n}]^J$; $J$'s other actions are trivial.}
\begin{align*}
    [y^{n+1}(x^n)]^J &\coloneq y^{n+1}(x^n)\\
    [y^{n+1} \typedownrel x^n]^J &\coloneq y^{n+1} \coextensiveCTT{n} x^{n}
\end{align*}
\begin{lem}\label{lem:de:helper}
    \FJT proves the following schemes, where  well-formed: 
    \begin{listn-0}
        \item\label{de:FJT:1} If $a^{m} \coextensiveCTT{k} c^l$ and $b^{n} \coextensiveCTT{k} c^l$, then $a^m \coextensiveCTT{k} b^n$
        \item\label{de:FJT:low} $a^m \coextensiveCTT{k} b^n$ iff $\forall x^k(a^{m} \coextensiveCTT{k} x^k \liff b^{n} \coextensiveCTT{k} x^k)$
        \item\label{de:FJT:2} $a^n \coextensiveCTT{n-1} b^n$ iff $[a^n \typedowneq b^n]^J$, noting here that we must have $n > 1$.
    \end{listn-0}
\end{lem}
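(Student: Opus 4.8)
The plan is to prove the three clauses in order, with each later clause leaning on the earlier ones.

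\textbf{Clause \eqref{de:FJT:1}} is pure sentential logic once $\coextensiveCTT{k}$ is unfolded. Given $a^m \coextensiveCTT{k} c^l$ and $b^n \coextensiveCTT{k} c^l$, for each $i < k$ and each $x^i$ we have $a^m(x^i) \liff c^l(x^i)$ and $b^n(x^i) \liff c^l(x^i)$, hence $a^m(x^i) \liff b^n(x^i)$; conjoining over $i < k$ gives $a^m \coextensiveCTT{k} b^n$. (Throughout, the standing ``where well-formed'' proviso supplies $k \leq \min(m,n,l)$, which is all that is needed for the expressions to parse.) The upshot is that $\coextensiveCTT{k}$ behaves like an equivalence relation: it is visibly reflexive and symmetric, and \eqref{de:FJT:1} is transitivity.

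\textbf{Clause \eqref{de:FJT:low}}, left-to-right, is then the familiar remark that equivalent elements have the same ``equivalence class'': assuming $a^m \coextensiveCTT{k} b^n$, fix $x^k$; if $a^m \coextensiveCTT{k} x^k$, then applying \eqref{de:FJT:1} with $a^m$ in the role of $c^l$ (and using symmetry of the hypotheses) yields $b^n \coextensiveCTT{k} x^k$, and the converse direction is symmetric. Right-to-left is the only step with real content, and here I would invoke \FJT-Comprehension. If $k = 0$ both sides are trivial (an empty conjunction and a vacuous universal), so assume $k > 0$; applying \FJT-Comprehension with $\phi_i(x^i) \coloneq a^m(x^i)$ for each $i < k$ produces a $z^k$ with $\bigland_{i<k}\forall x^i(z^k(x^i) \liff a^m(x^i))$, i.e.\ $z^k \coextensiveCTT{k} a^m$. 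Instantiating the assumed biconditional at this $z^k$ gives $b^n \coextensiveCTT{k} z^k$, and then \eqref{de:FJT:1}, with $z^k$ in the role of $c^l$, delivers $a^m \coextensiveCTT{k} b^n$.

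\textbf{Clause \eqref{de:FJT:2}} requires no further work: it is just an instance of \eqref{de:FJT:low}. Since $n > 1$, the $\typedownrel$ occurring inside $a^n \typedowneq b^n$ runs from type $n \geq 2$ down to type $n-1 \geq 1$, so it is a genuine \STTd-symbol and $J$ acts on it via $[y^{n+1}\typedownrel x^n]^J \coloneq y^{n+1}\coextensiveCTT{n}x^n$. Hence, unfolding $\typedowneq$ (for $n>1$) and pushing $J$ through the quantifier and connective, $[a^n \typedowneq b^n]^J$ becomes $\forall x^{n-1}(a^n \coextensiveCTT{n-1} x^{n-1} \liff b^n \coextensiveCTT{n-1} x^{n-1})$, which is exactly the right-hand side of \eqref{de:FJT:low} with $m \coloneq n$ and $k \coloneq n-1$. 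So \eqref{de:FJT:2} follows from \eqref{de:FJT:low}.

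The only genuine obstacle is the right-to-left direction of \eqref{de:FJT:low}, and even that is routine once one recalls that \FJT-Comprehension lets us manufacture a type $k$ entity $\coextensiveCTT{k}$-equivalent to the higher-type $a^m$; everything else is bookkeeping with the definitions of $\coextensiveCTT{k}$, $\typedowneq$, and $J$.
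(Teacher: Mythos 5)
Your proof is correct and follows essentially the same route as the paper's: clause \eqref{de:FJT:1} by unfolding the definition, clause \eqref{de:FJT:low} left-to-right via \eqref{de:FJT:1} and right-to-left by using \FJT-Comprehension to manufacture a type $k$ witness $\coextensiveCTT{k}$-equivalent to $a^m$ and then applying \eqref{de:FJT:1}, and clause \eqref{de:FJT:2} as an instance of \eqref{de:FJT:low} once $[a^n \typedowneq b^n]^J$ is unfolded. Your explicit handling of the degenerate $k=0$ case is a small addition the paper leaves tacit, but it changes nothing of substance.
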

\begin{proof}
    \emphref{de:FJT:1} Trivial. 
    
    \emphref{de:FJT:low} \emph{Left-to-right.} Suppose $a^{m} \coextensiveCTT{k} b^{n}$; if $a^{m} \coextensiveCTT{k} c^k$, then $b^{n} \coextensiveCTT{k} c^k$ by \eqref{de:FJT:1}, and conversely. \emph{Right-to-left}. Suppose $\forall x^k(a^{m} \coextensiveCTT{k} x^k \liff b^{n} \coextensiveCTT{k} v^k)$; by \FJT-Comprehension, there is some $c^k \coextensiveCTT{k} a^{m}$; so $b^{n} \coextensiveCTT{k} c^k$, and now  $a^{n} \coextensiveCTT{k} b^{n}$ by \eqref{de:FJT:1}. 
    
    \emphref{de:FJT:2} Using \eqref{de:FJT:low}, since $[a^{n} \typedowneq b^{n}]^J$ is $\forall x^{n-1}(a^{n} \coextensiveCTT{n-1} x^{n-1} \liff b^{n} \coextensiveCTT{n} x^{n-1})$.
 \end{proof}
\begin{lem}\label{lem:from-STTd-to-CTTfj}
    $J : \STTd \functionto \FJT$ is an interpretation.
\end{lem}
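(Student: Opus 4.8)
The plan is to verify that $J$ sends each axiom and rule of \STTd to something provable in \FJT; since $J$ acts trivially on all logical vocabulary and on atomic formulas of the form $y^{n+1}(x^n)$, the \STT-style quantifier and connective rules (which \FJT shares with \STTd) are preserved immediately, and the only real work concerns \STTd-Comprehension and the three Down-axioms. Throughout I will lean on Lemmas~\ref{lem:de:helper} and (where convenient) \ref{lem:thechain}, and on the fact that the conjuncts $\phi_i$ in \FJT-Comprehension may be chosen independently of one another.

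First I would handle \STTd-Comprehension. For type $1$ the relevant instance translates, since $J$ is trivial on application, to $\exists z^1\forall x^0(z^1(x^0) \liff \phi^J(x^0))$, which is the $n=1$ case of \FJT-Comprehension. For $n>1$, unpacking the bounded-quantifier abbreviation, $[\text{\STTd-Comprehension}]^J$ reads $\forall y^n\exists z^{n+1}(z^{n+1} \coextensiveCTT{n} y^n \land \forall x^n(z^{n+1}(x^n) \liff \phi^J(x^n)))$. Fixing $y^n$, I apply \FJT-Comprehension for type $n+1$ with $\phi_i(x^i) \coloneq y^n(x^i)$ for $i < n$ and $\phi_n(x^n) \coloneq \phi^J(x^n)$; the resulting $z^{n+1}$ agrees with $y^n$ on every type $i<n$ entity (that is exactly $z^{n+1}\coextensiveCTT{n}y^n$) and satisfies the required biconditional at type $n$. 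An essentially identical appeal to \FJT-Comprehension (with $\phi_i(x^i) \coloneq z^{n+1}(x^i)$ for $i<n$) produces, for any $z^{n+1}$, a type $n$ entity $x^n$ with $z^{n+1} \coextensiveCTT{n} x^n$, which is $[\text{Down}_\exists]^J$.

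For $[\text{Down}_\text{Sim}]^J$ and $[\text{Down}_\text{Max}]^J$, the key observations are: $[z^{n+1} \typedownrel x^n]^J$ is $z^{n+1} \coextensiveCTT{n} x^n$; $[x^n \coextensive y^n]^J$ is just $x^n \coextensive y^n$ (agreement at type $n-1$); and, by Lemma~\ref{lem:de:helper}.\ref{de:FJT:2}, $[y^n \typedowneq x^n]^J$ is $y^n \coextensiveCTT{n-1} x^n$ when $n>1$ and is trivial when $n=1$. For Down$_\text{Sim}$: from $z^{n+1}\coextensiveCTT{n}x^n$ and $z^{n+1}\coextensiveCTT{n}y^n$, Lemma~\ref{lem:de:helper}.\ref{de:FJT:1} gives $x^n\coextensiveCTT{n}y^n$, whose $i=n-1$ conjunct is $x^n\coextensive y^n$ and which also yields $y^n\coextensiveCTT{n-1}x^n$ by dropping conjuncts and using symmetry. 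For Down$_\text{Max}$: the hypotheses $x^n\coextensive y^n$ (agreement at type $n-1$) and $[y^n\typedowneq x^n]^J$ (agreement at all types $i<n-1$, for $n>1$; vacuous for $n=1$) together give $x^n\coextensiveCTT{n}y^n$; then Lemma~\ref{lem:de:helper}.\ref{de:FJT:1}, applied to $z^{n+1}\coextensiveCTT{n}x^n$ and $y^n\coextensiveCTT{n}x^n$, delivers $z^{n+1}\coextensiveCTT{n}y^n$, i.e.\ $[z^{n+1}\typedownrel y^n]^J$.

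I do not expect a genuine obstacle here: the argument is a routine axiom-by-axiom check. The only points requiring care are the bookkeeping of the coextensiveness bounds (tracking which conjuncts $\coextensiveCTT{k}$ carries) and the degenerate $n=1$ case, where \STTd has no $\typedownrel$ relating type $1$ to type $0$ and $\typedowneq$ is stipulated trivially, so one must check separately that the translations of Down$_\text{Sim}$ and Down$_\text{Max}$ still go through (they do, since at $n=1$ the $\typedowneq$-conjunct contributes nothing and $\coextensive$ already supplies agreement at the only relevant type).
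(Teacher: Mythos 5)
Your proposal is correct and follows essentially the same route as the paper's own proof: a case-by-case check of Down$_\exists$, Down$_\text{Sim}$, Down$_\text{Max}$ and \STTd-Comprehension, each handled via a suitable instance of \FJT-Comprehension together with Lemma \ref{lem:de:helper} (parts \ref{de:FJT:1} and \ref{de:FJT:2}), including the same treatment of the vacuous $n=1$ case of $\typedowneq$. The only difference is presentational: you make explicit the trivial preservation of the logical rules and the plain type-$1$ comprehension instance, which the paper leaves implicit.
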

\begin{proof}
    \emph{For Down$_\exists$.} Fix $z^{n+1}$; by \FJT-Comprehension there is some $x^n \coextensiveCTT{n} z^{n+1}$, i.e.\ $[z^{n+1} \typedownrel x^{n}]^J$. 
    
    \emph{For Down$_\text{Sim}$.} Suppose $[x^n \typeuprel z^{n+1} \typedownrel y^n]^J$, i.e.\ $x^n \coextensiveCTT{n} z^{n+1} \coextensiveCTT{n} x^n$; so $x^n \coextensiveCTT{n} y^n$ by Lemma \ref{lem:de:helper}.\ref{de:FJT:1}. In particular, $x^n \coextensive y^n$, so $[x^n \coextensive y^n]^J$. Moreover, if $n > 1$ then $x^n \coextensiveCTT{n-1} y^n$, so that $[x^n \typedowneq  y^n]^J$ by Lemma \ref{lem:de:helper}.\ref{de:FJT:2}; if $n = 1$ then $[x^n \typedowneq  y^n]^J$ vacuously.
    
    \emph{For Down$_\text{Max}$.} Suppose $[z^{n+1} \typedownrel x^{n} \coextensive y^{n} \typedowneq x^n]^J$, i.e.\ $z^{n+1} \coextensiveCTT{n} x^n \coextensive y^{n} \coextensiveCTT{n-1} x^{n}$, using Lemma \ref{lem:de:helper}.\ref{de:FJT:2}. So $y^{n} \coextensiveCTT{n} x^{n}$, and hence $z^{n+1} \coextensiveCTT{n} y^{n}$ by Lemma \ref{lem:de:helper}.\ref{de:FJT:1}, i.e.\ $[z^{n+1} \typedownrel y^{n}]^J$.    
    
    \emph{For \STTd-Comprehension.} Let $\phi$ be any \STTd-formula not containing $z^{n+1}$ (but which may contain $y^n$). Fix $y^n$; by \FJT-Comprehension, there is $z^{n+1}$ such that:
    \begin{align*}
        \forall x^n(z^{n+1}(x^n) \liff \phi^J(x^n)) &\land \bigland_{i < n}\forall x^i(z^{n+1}(x^i) \liff y^n(v^i))\\
        \text{i.e.\ }
        \forall x^n(z^{n+1}(x^n) \liff \phi^J(x^n)) &\land z^{n+1} \coextensiveCTT{n} y^n\\
        \text{i.e.\ }
        [\forall x^n(z^{n+1}(x^n) \liff \phi(x^n))^J &\land z^{n+1} \typedownrel y^n]^J
    \end{align*}
    So we have arbitrary instances of $[\STTd\text{-Comprehension}]^J$.
\end{proof}\noindent
It only remains to show that $I$ and $J$ characterise a definitional equivalence. 
\begin{lem}
    \FJT proves this scheme: $[[y^n(x^m)]^I]^J \liff y^n(x^m)$.
\end{lem}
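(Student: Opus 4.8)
The plan is to unwind both interpretations, reduce the claim to one elementary fact about $\coextensiveCTT{}$-chains, and then argue the two directions of the biconditional separately, with only the left-to-right direction requiring any real work.

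First I would unfold the definitions. If $m+1 = n$ then $[y^n(x^m)]^I$ is literally $y^n(x^m)$, so $[[y^n(x^m)]^I]^J$ is $y^n(x^m)$ and the scheme is trivial. So suppose $m+1 < n$. Applying $J$ to the clause defining $[y^n(x^m)]^I$, and using that $[y^{k+1}\typedownrel y^k]^J$ is $y^{k+1}\coextensiveCTT{k}y^k$ while $[y^{m+1}(x^m)]^J$ is $y^{m+1}(x^m)$, we find that $[[y^n(x^m)]^I]^J$ is
$$\forall y^{n-1}\ldots\forall y^{m+1}\bigl(y^n\coextensiveCTT{n-1}y^{n-1}\coextensiveCTT{n-2}\ldots\coextensiveCTT{m+1}y^{m+1}\lonlyif y^{m+1}(x^m)\bigr).$$
The key observation is then: whenever such a chain $y^n\coextensiveCTT{n-1}y^{n-1}\coextensiveCTT{n-2}\ldots\coextensiveCTT{m+1}y^{m+1}$ holds, $y^n$ and $y^{m+1}$ agree on every type $m$ entity, since each link $y^{k+1}\coextensiveCTT{k}y^k$ has $k\geq m+1>m$ and so forces $\forall x^m(y^{k+1}(x^m)\liff y^k(x^m))$, and the biconditional chains through (one could instead package this via Lemma~\ref{lem:de:helper}.\ref{de:FJT:1}).

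The right-to-left direction is then immediate: given $y^n(x^m)$, fix $y^{n-1},\ldots,y^{m+1}$ together with the chain; by the key observation $y^{m+1}(x^m)$ follows. For left-to-right, assume the displayed formula and build a witnessing chain downwards. For $k$ running from $n-1$ down to $m+1$, use the instance of \FJT-Comprehension with each $\phi_i(x^i)$ taken to be $y^{k+1}(x^i)$ (well-formed since $i<k<k+1$, and free of $z^k$) to obtain a type $k$ entity $y^k$ with $\bigland_{i<k}\forall x^i(y^k(x^i)\liff y^{k+1}(x^i))$, i.e.\ $y^{k+1}\coextensiveCTT{k}y^k$. Instantiating the displayed formula with these $y^{n-1},\ldots,y^{m+1}$ and discharging the (now established) chain gives $y^{m+1}(x^m)$, and the key observation transfers this back to $y^n(x^m)$. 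There is no genuine obstacle here; the only care needed is bookkeeping — treating the $m+1=n$ base case separately, checking that the \FJT-Comprehension instances are well-formed, and viewing the downward construction of the chain as a finite metatheoretic iteration rather than a proper induction.
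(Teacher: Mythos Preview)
Your proposal is correct and follows essentially the same route as the paper: unfold $I$ then $J$ to reach the universally quantified $\coextensiveCTT{}$-chain formula, then for one direction chain the agreement on type $m$ entities through the links (the paper cites Lemma~\ref{lem:de:helper}.\ref{de:FJT:1} here, but your direct transitivity observation is the same content), and for the other direction build a witnessing chain by repeated \FJT-Comprehension. The paper's proof is terser but the ideas and their order are identical.
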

\begin{proof}
    Note that the following are equivalent:
    \begin{listn-0}
        \item $[[y^n(x^m)]^I]^J$
        \item $[\forall y^{n-1}\ldots \forall y^{m+1}(y^n \typedownrel y^{n-1} \typedownrel \ldots \typedownrel y^{m+1} \lonlyif y^{m+1}(x^m))]^J$
        \item $\forall y^{n-1}\ldots \forall y^{m+1}(y^n \coextensiveCTT{n-1} y^{n-1} \coextensiveCTT{n-2} \ldots \coextensiveCTT{m+1} y^{m+1} \lonlyif y^{m+1}(x^m))$
        \item $y^n(x^m)$
    \end{listn-0}
    The last equivalence uses Lemma \ref{lem:de:helper}.\ref{de:FJT:1}, and repeated instances of  \FJT-Comprehension to provide a chain $y^n \coextensiveCTT{n-1} a^{n-1} \coextensiveCTT{n-2} \ldots \coextensiveCTT{m+1} a^{m+1}$.
\end{proof}

\begin{lem}\label{lem:STTd:DE}
    \STTd proves these schemes:     
    $[[y^{n+1}(x^n)]^J]^I \liff y^{n+1}(x^n)$ and $[[y^{n+1} \typedownrel x^n]^J]^I \liff y^{n+1} \typedownrel x^n$. 
\end{lem}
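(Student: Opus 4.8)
The plan is to unpack the two composite translations and reduce everything to facts about $\typedownrel$-chains that are already in hand. The first scheme is immediate: by the definition of $J$, $[y^{n+1}(x^n)]^J$ is literally $y^{n+1}(x^n)$, and since the two superscripts differ by exactly one, the definition of $I$ leaves $y^{n+1}(x^n)$ untouched, so $[[y^{n+1}(x^n)]^J]^I$ is syntactically just $y^{n+1}(x^n)$ and the biconditional is a logical triviality. (I use the same observation — that $[w^k(v^j)]^I$ is just $w^k(v^j)$ when $j+1=k$ — silently below.)

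For the second scheme, first compute $[y^{n+1}\typedownrel x^n]^J = y^{n+1}\coextensiveCTT{n}x^n$, and then, since $I$ commutes with connectives and quantifiers,
\[
	[[y^{n+1}\typedownrel x^n]^J]^I \;=\; \bigland_{i<n}\forall v^i\bigl([y^{n+1}(v^i)]^I \liff [x^n(v^i)]^I\bigr),
\]
so the goal is to prove this conjunction equivalent, in \STTd, to $y^{n+1}\typedownrel x^n$. I would first isolate a \emph{chain-insensitivity} fact: for any $w^k$ and any $j$ with $j+1<k$, a $\typedownrel$-chain $w^k \typedownrel u^{k-1}\typedownrel\ldots\typedownrel u^{j+1}$ exists (Down$_\exists$, iterated), and any two such chains have coextensive members at every level (Lemma \ref{lem:thechain}.\ref{lem:thechain:coext}, comparing any such chain, extended down to level $1$, against another with the same top $w^k$); hence $[w^k(v^j)]^I$ is provably equivalent to ``$u^{j+1}(v^j)$'' for some — equivalently every — level-$(j+1)$ endpoint $u^{j+1}$ of a chain from $w^k$.

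Granting chain-insensitivity, the right-to-left direction is short: assuming $y^{n+1}\typedownrel x^n$, fix $i<n$, extend $x^n$ to a chain $x^n\typedownrel c^{n-1}\typedownrel\ldots\typedownrel c^{i+1}$, and prepend $y^{n+1}$ to obtain a chain from $y^{n+1}$ with the same level-$(i+1)$ endpoint; so both $[y^{n+1}(v^i)]^I$ and $[x^n(v^i)]^I$ are equivalent to $c^{i+1}(v^i)$ — reading $c^{i+1}$ as $x^n$ itself when $i=n-1$ — hence to one another. For the left-to-right direction, assume the big conjunction, build chains $y^{n+1}\typedownrel a^n\typedownrel\ldots\typedownrel a^1$ and $x^n\typedownrel b^{n-1}\typedownrel\ldots\typedownrel b^1$, and set $b^n:=x^n$. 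The conjunct at index $n-1$, with chain-insensitivity for $y^{n+1}$, gives $a^n\coextensive b^n$; the conjunct at index $i-1$, with chain-insensitivity for both $y^{n+1}$ and $x^n$, gives $a^i\coextensive b^i$ for each $1\le i<n$. Then Lemma \ref{lem:thechain}.\ref{lem:thechain:gag} applies with $m=1$ — here $b^1\typedowneq a^1$ holds by the level-$1$ stipulation for $\typedowneq$ — and yields $a^{n+1}\typedownrel b^n$, i.e.\ $y^{n+1}\typedownrel x^n$, as required.

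The main obstacle is the left-to-right direction, and specifically the bookkeeping of the two chains: the conjunct at index $i$ constrains the entities at chain-level $i+1$, so the chains must be lined up with a one-step offset, the $n=1$ case (where both chains are essentially empty and $\typedowneq$ at level $1$ is vacuous) must be checked separately, and every appeal to Down$_\text{Sim}$, Down$_\text{Max}$ and Lemma \ref{lem:thechain} must be made at a type where the relevant principle is stated (i.e.\ $n>0$). None of this is deep, but it is where care is needed; chain-insensitivity itself is essentially just a repackaging of Down$_\exists$ and Lemma \ref{lem:thechain}.\ref{lem:thechain:coext}.
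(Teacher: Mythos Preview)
Your proposal is correct and follows essentially the same route as the paper: the paper likewise fixes chains $y^{n+1}\typedownrel a^n\typedownrel\ldots\typedownrel a^1$ and $x^n\typedownrel b^{n-1}\typedownrel\ldots\typedownrel b^1$, uses Lemma~\ref{lem:thechain}.\ref{lem:thechain:coext} to reduce the translated conjunction to $a^n\coextensive x^n\land a^{n-1}\coextensive b^{n-1}\land\ldots\land a^1\coextensive b^1$ (your ``chain-insensitivity'' step), and then invokes Lemma~\ref{lem:thechain}.\ref{lem:thechain:gag} at $m=1$ for the harder direction. Your explicit packaging of chain-insensitivity and your attention to the $n=1$ boundary case make the argument a bit more transparent, but the underlying strategy is identical.
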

\begin{proof}
    The first scheme is trivial. For the second, note that the following are equivalent:
      \begin{listn-0}
        \item $[[y^{n+1} \typedownrel x^n]^J]^I$
        \item $[\bigland_{i < n}\forall v^i(y^{n+1}(v^i) \liff x^n(v^i))]^I$
        \item\label{n:STTd:mess} 
        $\bigland_{i < n}\forall v^i(\forall y^n \forall y^{n-1}\ldots \forall y^{i+1}(y^{n+1} \typedownrel y^{n} \typedownrel y^{n-1} \typedownrel \ldots \typedownrel y^{i+1} \lonlyif y^{i+1}(v^i)) \liff {}$\\\phantom{.}\hspace{6.5em}
        $\forall x^{n-1} \ldots \forall x^{i+1}(x^n \typedownrel x^{n-1} \typedownrel \ldots \typedownrel x^{i+1} \lonlyif x^{i+1}(v^i)))$
        \item\label{n:STTd:goal} $y^{n+1} \typedownrel x^n$
    \end{listn-0}
    For the last equivalence, first note that repeated use of Down$_\exists$ gives us chains:
    \begin{align*}
        y^{n+1} \typedownrel{} &a^n \typedownrel a^{n-1} \typedownrel \ldots \typedownrel a^{1}\\
        &x^{n} \typedownrel b^{n-1} \typedownrel \ldots \typedownrel b^{1}
    \end{align*}
    Using Lemma \ref{lem:thechain}.\ref{lem:thechain:coext} twice, \eqref{n:STTd:mess} is equivalent to:
    \begin{listn}
        \item[(\ref{n:STTd:mess}$'$)] $a^n \coextensive x^n \land a^{n-1} \coextensive b^{n-1} \land \ldots \land a^1 \coextensive b^1$ 
    \end{listn}
    Now Lemma \ref{lem:thechain}.\ref{lem:thechain:coext} yields \eqref{n:STTd:goal} $\Rightarrow$ (\ref{n:STTd:mess}$'$), and Lemma \ref{lem:thechain}.\ref{lem:thechain:gag} gives (\ref{n:STTd:mess}$'$) $\Rightarrow$ \eqref{n:STTd:goal}.
\end{proof}\noindent
Assembling Lemmas \ref{lem:from-CTTfj-to-STTd}--\ref{lem:STTd:DE}, we obtain:
\begin{thm}\label{thm:CTTfj:DE}
    \STTd and \FJT are definitionally equivalent
\end{thm}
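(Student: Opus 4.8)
The plan is to read definitional equivalence straight off the four interpretation lemmas just established. Recall that \STTd and \FJT are definitionally equivalent provided there are interpretations $I : \FJT \functionto \STTd$ and $J : \STTd \functionto \FJT$ such that the composite $J$-after-$I$ is provably equivalent to the identity interpretation of \FJT, and $I$-after-$J$ is provably equivalent to the identity interpretation of \STTd. Since $I$ and $J$ were stipulated to act trivially on every connective and quantifier and on the primitive sign $=$, and to fix each sort of variable, checking the composites reduces to checking their action on atomic non-logical formulas.

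First I would invoke Lemma \ref{lem:from-CTTfj-to-STTd} and Lemma \ref{lem:from-STTd-to-CTTfj}, which supply the two required interpretations $I$ and $J$; consequently the composites are themselves interpretations of \FJT, respectively \STTd, into themselves. Then I would appeal to the two round-trip lemmas. The lemma immediately before Lemma \ref{lem:STTd:DE} establishes, in \FJT, the scheme $[[y^n(x^m)]^I]^J \liff y^n(x^m)$; because the composite interpretation commutes with the connectives and quantifiers by construction, a routine induction on formula complexity extends this to $[[\phi]^I]^J \liff \phi$ for every \FJT-formula $\phi$. Symmetrically, Lemma \ref{lem:STTd:DE} gives, in \STTd, both $[[y^{n+1}(x^n)]^J]^I \liff y^{n+1}(x^n)$ and $[[y^{n+1} \typedownrel x^n]^J]^I \liff y^{n+1} \typedownrel x^n$, i.e.\ the required equivalence at both atomic predicates of \STTd, and the same induction yields $[[\psi]^J]^I \liff \psi$ for every \STTd-formula $\psi$. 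Together, these are exactly the clauses in the definition of definitional equivalence.

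I do not expect any real obstacle at this stage: all the substance sits in the lemmas, where the axiom-by-axiom soundness checks for $I$ and $J$ and the careful handling of $\typedownrel$-chains (Lemmas \ref{lem:typedowneq}--\ref{lem:thechain}) do the heavy lifting. The only things worth spelling out when assembling the argument are (i) that the composite interpretations distribute over the logical vocabulary, so the atomic round-trip schemes promote to arbitrary formulas; and (ii) that the definite-description operator used on the \FJT side is eliminable in the usual Russellian way, so nothing is lost by having worked with it. With those remarks the theorem is immediate.
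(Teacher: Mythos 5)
Your proposal is correct and takes essentially the same route as the paper, whose proof of Theorem \ref{thm:CTTfj:DE} consists precisely in assembling Lemmas \ref{lem:from-CTTfj-to-STTd}--\ref{lem:STTd:DE}, with the routine induction over formula complexity left implicit. One harmless slip: the definite-description operator is used only in the \CTT[\omega]/\STTu equivalence of \S\ref{equivalence:CTT}, not on the \FJT side, so your remark (ii) is unnecessary here.
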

\end{subappendices}

\section*{Acknowledgements}
Thanks to 
Neil Barton,
Salvatore Florio,
Peter Fritz,
Luca Incurvati,
Stephan Krämer,
Øystein Linnebo,
Nicholas Jones,
Agustín Rayo,
Thomas Schindler,
Lukas Skiba, and
an anonymous referee for \emph{Review of Symbolic Logic}.

\printbibliography

\end{document}